\documentclass[12pt]{article}

\usepackage[utf8]{inputenc}
\usepackage[alphabetic]{amsrefs}
\usepackage{amssymb}
\usepackage{mathrsfs}
\usepackage{bbm}
\usepackage{xcolor}
\usepackage{lscape}
\usepackage{subfig}
\usepackage{titlefoot}
\usepackage{float}
\usepackage{fullpage}
\usepackage{stmaryrd}
\usepackage{amsthm}
\usepackage{enumitem}
\usepackage{amsmath}
\usepackage{thmtools}
\usepackage{ifthen}

\usepackage{amsthm}

\usepackage[colorlinks=true, linkcolor=magenta, citecolor=violet]{hyperref}

\usepackage[export]{adjustbox}

\theoremstyle{plain}
\newtheorem{theorem}{Theorem}[section]
\newtheorem{lemma}[theorem]{Lemma}
\newtheorem{proposition}[theorem]{Proposition}

\newtheorem{corollary}[theorem]{Corollary}

\theoremstyle{definition}
\newtheorem{definition}[theorem]{Definition}
\newtheorem{example}[theorem]{Example}
\newtheorem{question}[theorem]{Question}
\newtheorem*{question*}{Question}

\theoremstyle{remark}
\newtheorem{remark}[theorem]{Remark}

\newtheorem*{To show}{To show}

\let\phi\varphi
\let\epsilon\varepsilon
\renewcommand\emptyset\varnothing
\DeclareSymbolFont{extraup}{U}{zavm}{m}{n}
\DeclareMathSymbol{\varheartsuit}{\mathalpha}{extraup}{86}
\DeclareMathSymbol{\vardiamondsuit}{\mathalpha}{extraup}{87}

\newcommand\Aut{\operatorname{Aut}}

\newcommand{\Stab}{\operatorname{Stab}}

\newcommand{\Sub}{\operatorname{Sub}}
\newcommand{\Lie}{\operatorname{Lie}}
\newcommand{\Hit}{\operatorname{Hit}}
\newcommand{\Miss}{\operatorname{Miss}}

\newcommand{\diam}{\operatorname{diam}}
\newcommand{\supp}{\operatorname{supp}}

\newcommand{\Ad}{\mathrm{Ad}}
\newcommand{\GL}{\mathrm{GL}}

\newcommand{\lev}{\mathrm{lev}}

\newcommand{\ncm}{\overline{\langle \mu \rangle}}

\newcommand\bbK{\mathbb{K}}

\newcommand\bbN{\mathbb{N}}

\newcommand\bbR{\mathbb{R}}

\newcommand\bbZ{\mathbb{Z}}

\newcommand\fraka{\mathfrak{a}}

\newcommand\frakg{\mathfrak{g}}

\newcommand\frakl{\mathfrak{l}}

\newcommand\frakn{\mathfrak{n}}

\newcommand\calC{\mathcal{C}}

\newcommand\calE{\mathcal{E}}

\newcommand\calH{\mathcal{H}}

\newcommand\calK{\mathcal{K}}

\newcommand\calM{\mathcal{M}}

\newcommand\calW{\mathcal{W}}

\makeatletter
\renewcommand{\@fnsymbol}[1]{
  \ensuremath{
    \ifcase#1\or
      \spadesuit\or
      \varheartsuit\or
      \vardiamondsuit\or
      \clubsuit\or
      \spadesuit\spadesuit\or
      \heartsuit\heartsuit\or
    \fi
  }
}
\makeatother
\title{Compact Invariant Random Subgroups}
\author{Tal Cohen\footnote{Weizmann Institute of Science.}, Helge Gl\"{o}ckner\footnote{Paderborn University, Institute of Mathematics.}, Gil Goffer\footnote{University of California at San Diego.}, and Waltraud Lederle\footnote{Bielefeld University.}}

\begin{document}

\maketitle

\begin{abstract} We study ergodic invariant random subgroups that give full measure to the subset of compact subgroups. We show that in real Lie groups, compactly generated $p$-adic Lie groups, locally compact hyperbolic groups and infinitely ended groups they are always contained in a compact normal subgroup. In general $p$-adic Lie groups, we show they are contained in the locally elliptic radical.
In totally disconnected locally compact groups, we show they are contained in the intersection of all Levi subgroups of inner automorphisms.
\end{abstract}

\section{Introduction}
An \emph{invariant random subgroup} (IRS) of a locally compact group $G$ is a conjugation-invariant Borel probability measure on the Chabauty space $\Sub(G)$ of closed subgroups of $G$. Introduced by Abert--Glasner--Vir{\'a}g \cite{AGV}, IRSs simultaneously generalise normal subgroups and lattices: a normal subgroup $N\trianglelefteq G$ gives rise to the Dirac mass $\delta_N$, while a lattice $\Gamma\le G$ gives rise to the push-forward of the Haar measure on $G/\Gamma$ under the stabiliser map. Since their introduction, invariant random subgroups have become a central object of study in measured group theory, with deep connections to rigidity theory, ergodic theory, and the structure of locally compact groups; see, e.g., \cite{AGV, bader2016amenable, PropertyM, stuck1994stabilizers}.
IRSs also arise naturally from probability measure preserving (pmp) actions. Indeed, given a pmp action of $G$ on $(X,\mu)$, the stabilizer of a $\mu$-random point defines an IRS. In fact, every IRS arises in this way; see \cite[Theorem 2.6]{7s}.

Given an IRS $\mu$ of $G$, we consider the closed, normal subgroup $ \ncm := \overline{\langle\bigcup\supp(\mu)\rangle}\le G$, often called \emph{the normal closure of $\mu$}. It is the smallest closed subgroup $N\le G$ such that $\mu(\mathrm{Sub}(N))=1$. 

In many contexts, classification results for IRSs mirror, and extend, classical rigidity theorems for lattices and normal subgroups, see \cite{stuck1994stabilizers,bader2016amenable,bowen2015invariant} and more. An inspiring demonstration of this is Bader--Duchesne--L{\'e}cureux's result showing that for an IRS $\mu$ that almost surely chooses amenable subgroups, $\ncm$ is amenable.

In the current paper, we investigate \emph{compact IRSs}, i.e., IRSs that almost surely choose compact subgroups. Our paper is guided by the following (informal) question:
\begin{question*}
    If $\mu$ is a compact IRS of $G$, how close is $\ncm$ to being compact?
\end{question*}

We assume henceforth that $G$ is locally compact second-countable (\emph{lcsc} for short). In several cases, such as real Lie groups, compactly generated $p$-adic Lie groups, and locally compact hyperbolic groups, we are able to provide the strongest possible result, showing that $\ncm$ is compact; see Theorems \ref{thm:real}, \ref{thm:p-adic}, and \ref{thm:hyp or ends} below. 

More generally, we consider the following topologically characteristic subgroups of~$G$; see \cite[Section 2]{Cornulier_commability} for a nice overview.
The \emph{polycompact radical} of $G$, denoted by $\calW(G)$, is the union of all compact normal subgroups of $G$. It is not necessarily closed, but it is closed whenever~$G$ is compactly generated
(see \cite{Cornulier_commability})
or a $p$-adic Lie group (Remark~\ref{closedpadic}).
The \emph{locally elliptic radical} of $G$, denoted by $\calE(G)$, is the maximal locally elliptic\footnote{A subgroup is \emph{locally elliptic} if every compact subset is contained in a compact subgroup.}, normal subgroup of $G$; it is always closed and contains $\overline{\calW(G)}$.

To measure `how close' $\ncm$ is to being compact, we distinguish three different possible conclusions, ranked from the strongest to the weakest:

\begin{enumerate}[label=(\arabic*)]
        \item\label{strong} If $\mu$ is a compact ergodic IRS of $G$, then $\ncm $ is compact.
        \item\label{middle}
        If $\mu$ is a compact IRS of $G$, then $\ncm \le\overline{\calW(G)}$.
        \item\label{weak}
        If $\mu$ is a compact IRS of $G$, then $\ncm \le \calE(G)$.
\end{enumerate}
Since $\overline{\calW(G)} \le \calE(G)$, Conclusion \ref{middle} implies Conclusion \ref{weak}. The fact that Conclusion \ref{strong} implies Conclusion \ref{middle} follows from the ergodic decomposition theorem, see Lemma \ref{lem:non-ergodic}. A similar proof shows that it
does not
matter if one assumes $\mu$ is ergodic in Conclusions \ref{middle} and \ref{weak}. 
For Conclusion \ref{strong}, however, it is important to assume ergodicity, in order to avoid immediate counterexamples such as choosing larger and larger compact normal subgroups with decreasing probabilities. 
Moreover, if $\calW(G)$ is closed, then Conclusion \ref{middle} and Conclusion \ref{strong} are equivalent, as is explained in the second part of Lemma \ref{lem:non-ergodic}.

As a first result, we show that for real Lie groups the strongest conclusion holds; note that we do not assume anything about connectedness, but we do assume that real and $p$-adic Lie groups are second countable.
\begin{theorem}\label{thm:real}
    Let $G$ be a real Lie group, and $\mu$ a compact ergodic IRS of $G$. 
    Then $\ncm$ is compact.
\end{theorem}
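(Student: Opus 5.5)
Theorem~\ref{thm:real} will be proved by first reducing to the connected component of the identity, then settling the connected case with structure theory and a ``recurrence forces boundedness'' argument.

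\textbf{Step 1: reducing to $G^\circ$.} Since $G^\circ$ is open, $G/G^\circ$ is discrete and countable. For a compact subgroup $K$, the image $KG^\circ/G^\circ$ is finite. Consider the map $K \mapsto K\cap G^\circ$. It is Borel and conjugation-equivariant, so it pushes $\mu$ forward to a compact IRS $\mu^\circ$ of $G^\circ$. That measure is invariant under all of $G$, hence ergodic for $G$, though not necessarily for $G^\circ$. I will then show two things.
\begin{itemize}
\item The normal closure of $\mu^\circ$ is contained in a compact normal subgroup $C$ of $G$.
\item Modulo $C$, the finite subgroups $KG^\circ/G^\circ$ and their lifts generate something compact.
\end{itemize}
For the second point, pass to $G/C$. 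There $\mu$ becomes an IRS supported on finite subgroups $F$ with $F\cap G^\circ$ trivial, so $F$ embeds in $G/G^\circ$. By ergodicity, the size of $F$ is almost surely equal to some constant $n$. I expect to handle this with a Zariski-density/Borel-density style argument, or more simply via Step 2 applied to the IRS of finite subgroups.

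\textbf{Step 2: the connected case.} Let $G$ be connected. It has a maximal compact normal subgroup $W$, and $G/W$ is a connected Lie group with no nontrivial compact normal subgroups. So it suffices to show: if $G$ is connected with no compact normal subgroup, then every compact IRS is $\delta_{\{e\}}$.

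Take the Levi decomposition $G=RS$ with $R$ the solvable radical.
\begin{itemize}
\item \emph{Semisimple part.} Projecting to $G/R$, a semisimple group without compact factors (after quotienting by compact factors), the image is a compact IRS. By the Borel density theorem for IRSs (Abert--Glasner--Vir\'ag, 7 samurai), an IRS of a semisimple group without compact factors is supported on discrete Zariski-dense subgroups or on $\{e\}$. A compact discrete group is finite, and a finite group cannot be Zariski-dense in a noncompact group. Hence the image is trivial and the IRS lives in $R$.
\item \emph{Solvable part.} Within $R$, consider the nilradical $N$ and the compact IRS restricted there. Compact subgroups of a simply connected nilpotent group are trivial, and compact subgroups of $R$ lie in conjugates of a maximal torus part. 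I would argue using the Bader--Duchesne--L\'ecureux result: $\ncm$ is amenable. Combined with the fact that compact subgroups sit in maximal compact subgroups, which are all conjugate, each random $K$ lies in $gMg^{-1}$ for the maximal compact $M$.
\end{itemize}

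\textbf{Main obstacle: forcing containment in a normal compact subgroup.} The key step is to show that $\mu$ concentrates on subgroups of a normal compact subgroup, rather than on conjugates of $M$ spread out along the noncompact directions. Concretely, in the solvable setting, $K\subseteq gMg^{-1}$ with $g$ ranging over $G/N_G(M)$, which is noncompact. I would use Poincar\'e recurrence (or Furstenberg-type arguments) along a one-parameter subgroup $a_t$ acting on $\Sub(G)$.

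The model case is a torus acting on a vector space $V$. For a fixed nontrivial compact $K$ in the semidirect product, its conjugates by translations $v\in V$ move $K$ to $vKv^{-1}$, whose ``centre'' drifts unless $K$ acts trivially on $v$. Invariance of a probability measure under translations by the noncompact group $V$ then forces the distribution of the centre to be a $V$-invariant probability on $V$-orbits, which is impossible unless $K$ centralises $V$. Iterating this up the derived series should show that almost every $K$ centralises the nilradical. Such $K$ then lie in the compact normal subgroup, which is trivial by assumption. Making this translation-invariance argument rigorous, via a measurable equivariant map to a space with no invariant probability, is the crux I expect to be hardest.
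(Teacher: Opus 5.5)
There is a genuine gap: the step you flag as the ``main obstacle'' is the whole content of the theorem, and it stays a heuristic. Saying that ``iterating up the derived series'' shows almost every $K$ centralises the nilradical, and that such $K$ then lie in a compact normal subgroup, does not prove either claim. It is also unclear which equivariant map to a space without invariant probability measures you would use when $\mu$ is spread over many non-conjugate subgroups.

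The missing idea is to use ergodicity first. A Lie group has only countably many conjugacy classes of compact subgroups:
\begin{itemize}
\item each compact $K$ lies in the open, almost connected subgroup $KG^0$, and there are only countably many such subgroups;
\item in an almost connected Lie group every compact subgroup is conjugate into a fixed maximal compact subgroup;
\item that maximal compact subgroup is a compact Lie group, so it has only countably many conjugacy classes of closed subgroups.
\end{itemize}
Hence an ergodic compact IRS is carried by a single class $[K]_G$, so $N_G(K)$ is cofinite (Lemma~\ref{lem:supp-confinite}). Your translation-invariance intuition then becomes a precise theorem of Bader--Caprace--Gelander--Mozes: in an amenable Lie group every cofinite subgroup is cocompact. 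So $[K]_G$ is the continuous image of the compact space $G/N_G(K)$, hence closed in $\Sub(G)$, and U\v{s}akov's lemma (Lemma~\ref{Usakov}) gives a compact normal subgroup containing it. For general $G$, the amenable-IRS theorem you already quote places $K$ in the amenable radical $R$. The class $[K]_G$ splits into countably many $R$-classes, which $G$ permutes transitively. By invariance they all have the same positive measure, so there are only finitely many. Each is compact by the amenable case, and U\v{s}akov's lemma applies again.

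Two further steps fail as written.

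First, the second bullet of Step~1 is not argued. Step~2 concerns connected groups and cannot be applied to finite subgroups of the non-connected group $G/C$. Here $G/G^0$ may be any countable group, and $G^0$ can still spread out finite subgroups that meet it trivially; reflections in $\mathrm{Isom}(\bbR)=\bbR\rtimes\bbZ/2$ are the simplest case. So this is the same crux again, not a formality. Pushing $\mu$ to the countable group $G/G^0$ only confines it to an almost connected open normal subgroup, and for that you would need Step~2 beyond the connected case.

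Second, in Step~2 the claim that the IRS ``lives in $R$'' is false when the Levi factor has compact simple factors, even if $G$ has no nontrivial compact normal subgroup. For $\bbR^3\rtimes SO(3)$ one has $R=\bbR^3$ and $G/R=SO(3)$. What you actually get is containment in the amenable radical, which the amenable-IRS theorem gives directly, making the Borel density detour unnecessary. The solvable analysis would then have to handle compact subgroups such as conjugates of $SO(3)$, not only tori.

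The paper's route avoids all of this. Countability of conjugacy classes treats connected and non-connected $G$ uniformly, and cocompactness of cofinite subgroups in amenable Lie groups replaces your recurrence argument.
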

As a corollary, we
deduce that this holds as well for any lcsc group $G$ such that $G/G^0$ is either compact or countable, where $G^0$ denotes the connected component of the identity; see Corollary \ref{Cor:AlmostConnected}.
For $p$-adic Lie groups, we show the following.
\begin{theorem}\label{thm:p-adic}
    Let $G$ be a $p$-adic Lie group.
    \begin{enumerate}[label=(\roman*)]
        \item\label{general-p-adic} If $\mu$ is a compact IRS of $G$, then $\ncm \le \calE(G)$.
        \item\label{cg-or-alg} Assume $G$ is compactly generated or algebraic. If $\mu$ is a compact ergodic IRS of $G$, then $\ncm$ is compact.
    \end{enumerate}
\end{theorem}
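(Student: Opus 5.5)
Part (i) comes first, using the structure of $p$-adic Lie groups. Let $G$ be a $p$-adic Lie group with Lie algebra $\frakg$ and adjoint representation $\Ad\colon G\to \GL(\frakg)$. A compact subgroup $K\le G$ has compact image $\Ad(K)$ in $\GL(\frakg)\cong \GL_n(\mathbb{Q}_p)$, so $\mu$ pushes forward to a compact IRS of the closure of $\Ad(G)$. The plan then has three steps.

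\textbf{Step 1 (linear case).} For a closed subgroup $H\le\GL_n(\mathbb{Q}_p)$, show that the normal closure of a compact IRS lies in $\calE(H)$. Pass to the Zariski closure and use Levi/unipotent decomposition. A compact subgroup of $\mathbb{Q}_p$-points consists of elements whose eigenvalues all have absolute value $1$. The set of such elements is conjugation invariant. The key claim is a Poincaré recurrence argument: if $g\in H$ contracts some unipotent direction, i.e.\ $g$ has an eigenvalue of absolute value $\neq 1$, then the conjugates $g^nKg^{-n}$ must return near $K$ for a positive-measure set of $K$. This forces $K$ to commute with, or normalise, the contraction structure. As a consequence, $K$ lies in the subgroup of elements acting with bounded orbits. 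This is the main obstacle.

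\textbf{Step 2 (lifting back to $G$).} Given Step 1, show that $\ncm$ is mapped into $\calE(\overline{\Ad(G)})$. The kernel of $\Ad$ on an open compact subgroup $U$ is the centraliser of an open subgroup, and it is handled by the fact that locally elliptic-by-locally elliptic is locally elliptic. The point to check is that the preimage of $\calE$ intersected with the appropriate normal subgroups is again locally elliptic. For this I would use the known result (Glöckner, Willis) that in $p$-adic Lie groups, extensions of locally elliptic groups by groups with compact open normal-ish subgroups remain locally elliptic.

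\textbf{Part (ii).} Assume $G$ is compactly generated. Then $\calW(G)$ is closed (Cornulier, cited above), so by Lemma~\ref{lem:non-ergodic} it suffices to prove $\ncm\le\calW(G)$. From (i) we have $\ncm\le\calE(G)$. For compactly generated $p$-adic Lie groups I would show that $\calE(G)$ is compact. The argument: $\calE(G)$ is a closed normal locally elliptic $p$-adic Lie subgroup, so it has an open compact subgroup $V$. Using compact generation and the finite dimensionality of Lie algebras, one can choose $V$ normalised up to finite index by a compact generating set. A Noetherian/ascending-chain argument on Lie algebras, where $\Lie(\calE(G))$ is an ideal and compact subgroups of fixed Lie algebra have bounded index growth, then gives that $\calE(G)$ is a union of a stabilising chain and hence compact. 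Then $\ncm\le\calE(G)$ is compact.

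In the algebraic case, $G=\mathbf{G}(\mathbb{Q}_p)$. Here $\calE(G)$ is the $\mathbb{Q}_p$-points of the anisotropic part of the reductive-by-unipotent radical structure. Since unipotent $p$-adic groups are locally elliptic but not compact, I would additionally use Step 1 to exclude unipotent radical contributions: an ergodic compact IRS inside a unipotent group is trivial, by contraction via a central torus element when one exists. Failing that, I would reduce to the compactly generated case by noting that algebraic $p$-adic groups are compactly generated modulo their unipotent radical.
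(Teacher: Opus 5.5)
There is a genuine gap, and the decisive one is in part (ii). You claim that $\calE(G)$ is compact for every compactly generated $p$-adic Lie group, and this is false. Take $G=\mathbb{Q}_p\rtimes\bbZ$ with the generator $t$ acting by $x\mapsto px$. It is generated by $\bbZ_p\cup\{t\}$, yet $\calE(G)=\mathbb{Q}_p=\bigcup_{n\ge 0}t^{-n}\bbZ_p t^{n}$ is not compact, while $\calW(G)=\{1\}$. So no ascending-chain argument can work, and $\ncm\le\calE(G)$ is too weak a starting point.

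The missing idea is a sharper target subgroup, which the paper obtains in three steps.
\begin{enumerate}
\item It forms the conjugation-invariant probability measure $\nu$ on $G$: choose $H\sim\mu$, then a Haar-random element of $H$. This satisfies $\bigcup\supp\mu\subseteq\supp\nu$. Poincar\'e recurrence for $h\mapsto ghg^{-1}$, applied to elements rather than to subgroups, gives $\nu(\mathrm{par}(g))=1$ for every $g$. Hence $\ncm\le\lev(G)$, which is trivial in the example above.
\item For compactly generated $G$ it shows that $\lev(G)$ has a compact open subgroup $N$ normal in $G$. Since $\Lie(\lev(I_g))=\lev(\Ad_g)$, each $\Ad_g$ restricted to $\Lie(\lev(G))$ has all eigenvalues of absolute value $1$. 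Parreau's theorem then makes the compactly generated group of these restrictions relatively compact, so it preserves a lattice $W$, and $N=\exp(p^nW)$ works for large $n$.
\item $\mu/N$ is an ergodic IRS of the countable discrete group $\lev(G)/N$ concentrated on finite subgroups, hence on a single finite conjugacy class. U\v{s}akov's lemma plus Lemma~\ref{lem:compact normal quotient} finish the proof, and ergodicity is used essentially here.
\end{enumerate}

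Part (i) is not established either. Step 1 leaves its decisive claim open. Step 2 fails as stated, because $\ker\Ad$ need not be locally elliptic: every countable discrete group is a $0$-dimensional $p$-adic Lie group with trivial $\Ad$. Passing to $\overline{\Ad(G)}$ therefore discards exactly the part of the problem that a ``locally elliptic-by-locally elliptic'' argument cannot reach. The paper argues in the opposite order. It proves (ii) for compactly generated groups, then deduces (i) by exhausting $G$ by compactly generated open subgroups $O_n$. This uses $\calE(G)=\bigcup_k\bigcap_{n\ge k}\calE(O_n)$ and the normalised restrictions $\mu_n$ of $\mu$ to $\Sub(O_n)$.

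Your algebraic case is likewise not viable. It falls back on the broken compactly generated case, and its claim that an ergodic compact IRS of a unipotent group is trivial already fails for $\delta_{\bbZ_p}$ on $\mathbb{Q}_p$. The paper instead uses Sit's theorem: every conjugation-invariant probability measure on $\mathbf{G}(k)$ is supported on $B(G)$, the elements with relatively compact conjugacy class. Since $\bigcup\supp\mu$ also consists of elliptic elements, it lies in $B(G)\cap P(G)=\calW(G)$, which is closed by Remark~\ref{closedpadic}.
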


A key ingredient in the proof of the compactly generated case is a structural result of independent interest (Theorem~\ref{thm:Gloeckner W(G) open in lev(G)}): in a compactly generated $p$-adic Lie group, $\calW(G)$ is open in $\lev(G)$, the intersection of all Levi subgroups of inner automorphisms.

Using actions on boundaries of hyperbolic spaces, we further show the following.

\begin{theorem}\label{thm:hyp or ends}
    Let $G$ be a compactly generated lcsc group. Assume that $G$ is hyperbolic or has infinitely many ends.
    If $\mu$ is a compact IRS of $G$, then $\ncm$ is contained in $\calW(G)$, which is compact.
\end{theorem}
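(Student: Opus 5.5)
Our plan is to let $G$ act on a proper geodesic hyperbolic space $X$ with compact point stabilisers and cocompact action. If $G$ is hyperbolic, $X$ is a Cayley--Abels graph or the Cayley graph for a compact generating set. If $G$ has infinitely many ends, we use the Bass--Serre tree of the splitting over a compact open subgroup, given by the Stallings--Abels theorem. In both cases the kernel of the action on the boundary $\partial X$ is a compact normal subgroup, namely the largest one. It is known that for compactly generated $G$ the set $\calW(G)$ is a closed subgroup. In the non-elementary case ($|\partial X|\ge 3$, i.e.\ infinitely many ends, or hyperbolic and not virtually $\bbZ$ and not compact) we use the standard fact that $\calW(G)$ coincides with the kernel of the boundary action, hence is compact. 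The elementary cases are handled separately. If $G$ is compact, the statement is trivial. If $G$ is two-ended (compact-by-$\bbZ$ or compact-by-$D_\infty$), $\calW(G)$ is the compact open normal subgroup $W$ with $G/W$ virtually $\bbZ$. Every compact subgroup maps to a finite subgroup of $G/W$, and $\mu$-a.s.\ it lands in the finite normal torsion part of the virtually cyclic quotient, up to a direct check.

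For the main non-elementary case, the strategy is to show that $\mu$-almost every $K$ acts trivially on $\partial X$. A compact subgroup $K$ fixes a point of $X$, in fact a bounded set, so it acts on $\partial X$ with fixed-point set $\mathrm{Fix}_{\partial X}(K)$, which is closed. Suppose $K$ acts nontrivially. Then there is a boundary point $\xi$ and $k\in K$ with $k\xi\ne\xi$. By continuity, a neighbourhood $U$ of $\xi$ satisfies $kU\cap U=\emptyset$ after shrinking. Thus $\Miss$-type sets give a positive-measure event $E=\{H: H \text{ moves some point of } U \text{ off } U\}$, or more simply $E_U=\{H : \exists h\in H,\ hU\cap U=\emptyset\}$.

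Next we use the dynamics of a loxodromic element $g$, which exists in the non-elementary case. Conjugating by $g^n$ maps $K$ to $g^nKg^{-n}$, which moves points near $g^n U$. Taking $U$ disjoint from the repelling point $g^-$, the sets $g^nU$ shrink to $g^+$. The key estimate is this. For a compact $K$ fixing a basepoint $x_0$ within distance $R$, an element $h$ moving a point of $g^nU$ far from itself must displace points in a shrinking neighbourhood of $g^+$. Because of quasi-convexity, such an $h$ cannot fix a ball of bounded radius near $x_0$ unless its fixed set is far out. Concretely, by invariance $\mu(E_U)=\mu(g^{n}E_U)=\mu(E_{g^nU})$. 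Now pick many loxodromics $g_1,\dots,g_m$ with pairwise distinct attracting points and disjoint small neighbourhoods $V_i$ of $g_i^+$. We aim to show that for a fixed $K$, the events $K\in E_{V_i}$ can occur for only boundedly many $i$ outside a set of small measure. The point is that a compact group whose fixed set in $X$ is at distance $\le R$ from $x_0$ acts on $\partial X$ nearly isometrically in the visual metric, with distortion bounded in terms of $R$. Hence it can map a point of a tiny $V_i$ out of $V_i$ only if the displacement is large. Choosing $R$ with $\mu(d(x_0,\mathrm{Fix}_X(K))\le R)>1-\epsilon$ and using that the $\mu(E_{V_i})$ all equal $\mu(E_U)=\delta$, summing over $i$ forces $m\delta\le C(R)+m\epsilon$, a contradiction for $m$ large when $\epsilon<\delta$.

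The main obstacle is making this counting step rigorous: bounding the number of disjoint small boundary regions in which a single compact subgroup, with fixed points near $x_0$, can act nontrivially. This is where the hyperbolic geometry (visual metric, shadows) enters. If it fails, we would instead use the boundary action: consider the $\mu$-random closed set $\mathrm{Fix}_{\partial X}(K)$, and apply the fact that non-elementary groups admit no invariant probability measure on closed proper subsets of the boundary, via a proximality argument. This forces $\mathrm{Fix}_{\partial X}(K)=\partial X$ a.s., so $K\le\ker(G\curvearrowright\partial X)=\calW(G)$, giving $\ncm\le\calW(G)$.
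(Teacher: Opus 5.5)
\textbf{There is a genuine gap: neither of your two routes shows that a $\mu$-random $K$ acts trivially on $\partial X$.}

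\emph{The counting step is false, not just unproven.} A compact subgroup with a fixed point near $x_0$ can act nontrivially on arbitrarily many disjoint small boundary regions. In $\Aut(T_d)$ with $d\ge 3$, the stabiliser of the vertex $x_0$ maps each shadow $\partial T_v$ ($v\neq x_0$) onto a disjoint shadow. So it lies in every $E_{V_i}$. Bounded visual distortion does not help, because even a visual isometry can move a tiny set off itself with tiny displacement: take an element fixing a huge ball around $x_0$ and swapping two deep branches. So no bound $C(R)$ exists, and the inequality $m\delta\le C(R)+m\epsilon$ has no basis.

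\emph{The fallback only gets you to $\{\emptyset,\partial X\}$.} North--south dynamics of loxodromics (no atom on the infinite orbit of $\{g^+\}$, density of repelling points) does show that a $G$-invariant random closed subset of $\partial X$ is a.s.\ $\emptyset$ or $\partial X$. But $\emptyset$ is $G$-fixed, and no proximality argument can exclude it. The case $\mathrm{Fix}_{\partial X}(K)=\emptyset$ really occurs for compact $K$:
\begin{itemize}
\item an order-$3$ element of $\Aut(T_3)$ fixing a vertex and rotating its three neighbours;
\item the order-$2$ generator of $\bbZ/2*\bbZ/3$ acting on its Bass--Serre tree.
\end{itemize}
Ruling out this case is exactly the content of the theorem, so ``this forces $\mathrm{Fix}_{\partial X}(K)=\partial X$'' is a non sequitur.

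\textbf{The missing idea} is to use invariance of $\mu$ inside $X$, not only on $\partial X$: a $\mu$-random compact subgroup must have points of uniformly small orbit diameter arbitrarily far out in every direction. You already have the right quantity. There is $R$ with $\mu\{K : d(x_0,Q(K))\le R\}>1-\epsilon$, where $Q(K)$ is the set of points whose $K$-orbit has diameter at most a fixed constant. Use quasi-centres here: a compact group acting on a general geodesic hyperbolic space, such as a graph, need not fix a point. The argument then runs as follows.
\begin{itemize}
\item Given $\xi\in\partial X$, coboundedness gives $g_n$ with $g_nx_0\to\xi$.
\item Invariance gives $\mu\{d(g_nx_0,Q(K))\le R\}>1-\epsilon$ for every $n$.
\item Since $\mu(\limsup_n E_n)\ge\limsup_n\mu(E_n)$, with probability $>1-\epsilon$ every $h\in K$ moves $g_nx_0$ a bounded amount for infinitely many $n$. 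Hence $h\xi=\xi$.
\end{itemize}
This is essentially the paper's Lemma \ref{lem:fixes boundary point}, which it proves via Corollary \ref{cor:big balls} with balls $B(x_n,n/4)$ along a Cauchy--Gromov sequence. Then $\Sub(\Stab(\xi))$ is closed and has full measure, so $\ncm\le\Stab(\xi)$ for every $\xi$. Thus $\ncm$ lies in the kernel of the boundary action, which equals $B(G)=\calW(G)$ and is compact.

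A smaller point: in the infinitely-ended case, the Bass--Serre tree of a splitting over a compact open subgroup generally has non-compact vertex stabilisers, contrary to what you state. This is harmless for the fixing step, which only uses coboundedness. But there you must argue compactness of the boundary kernel via the compact edge groups rather than via properness of the action.
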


The following is an example of a compact IRS $\mu$ of an lcsc group $G$, for which $\ncm=G$ is non-compact. 
\begin{example}\label{exa:middle not strong G_r}
    Consider the group $G_{\eta}$ of automorphisms of a regular tree preserving a horosphere $\calH$ corresponding to a point $\eta$ in the tree boundary. Equivalently, $G_{\eta}$ consists of all tree automorphisms that fix pointwise some ray representing $\eta$. This group is locally elliptic, but not compact; its polycompact radical is a proper, dense subgroup.
    Let $\mu$ be the compact IRS defined as the pointwise stabiliser of a random subset of vertices of $\calH$ (chosen according to a Bernoulli product measure)\footnote{This IRS is essentially the intersectional IRS of $G_\eta$ associated with a vertex stabiliser $K$, as defined in \cite[Section 3]{Intersectional}. Although they only consider countable groups, their definition works as long as the normaliser of $K$ is open.}. Since $\mu$ is the pushforward of a Bernoulli measure on $2^\calH$, and the Bernoulli measure is ergodic, $
    \mu$ is an ergodic IRS. Note that, for every finite subset $A \subset \calH$, the probability of not fixing any element of $A$ is positive. It follows that $\ncm=G_\eta$.
    Since $G_\eta$ is not compact and $G_\eta=\overline{\calW(G_\eta)}$, we get that $G_\eta$ satisfies Conclusion \ref{middle}, but not Conclusion \ref{strong}.
\end{example}

Observe that the group $G_\eta$ from Example \ref{exa:middle not strong G_r} is not compactly generated. We do not know the answer to the following:
\begin{question}
    If $\mu$ is an ergodic compact IRS of a compactly generated lcsc group $G$, is $\ncm$ necessarily compact?
\end{question}

We further ask:
\begin{question}
    If $\mu$ is a compact IRS of an lcsc group $G$, is $\ncm$ necessarily contained in $\overline{\calW(G)}$? Is it necessarily contained in $\calE(G)$?
\end{question}

\begin{remark}
    If $G$ satisfies Conclusions \ref{strong}, \ref{middle}, or \ref{weak} above, $\Gamma\le G$ is a lattice, and $N\trianglelefteq \Gamma$ is a finite normal subgroup, then $N$ is contained in some compact normal subgroup of $G$, in $\overline{\calW(G)}$, or in $\calE(G)$, respectively. In general, it seems to be open whether a finite, normal subgroup of a nonuniform lattice is necessarily contained in the polycompact radical of the ambient group.
\end{remark}

Given a compact IRS $\mu$ of $G$, we can define a conjugation-invariant probability measure~$\nu$ on $G$ that is supported on elliptic elements by sampling a $\mu$-random subgroup $H$ and then a Haar-random element $h\in H$ (see Proposition \ref{AveragingMu}).
Conversely, given such a measure $\nu$ on $G$, we get a compact IRS of $G$ via the map $g \mapsto \overline{\langle g \rangle}$.
Thus, we can go back and forth between compact IRSs and conjugation-invariant probability measures supported on elliptic elements.
Note that it is not clear whether $\supp(\nu)$ is compact even when $\nu$ is supported on the conjugacy class of a single element, see for example \cite{Raja2025}, where this is proved for (M)-by-abelian groups.

By studying $\supp(\nu)$, via combining a Poincar\'e recurrence argument with Willis' structure theory of totally disconnected, locally compact groups, we show:

\begin{theorem}
    Let $G$ be a totally disconnected, lcsc group and $\mu$ a compact IRS of $G$. Then $\ncm$ is contained in $\bigcap_{g \in G} \lev(g)$, the intersection of all Levi subgroups of inner automorphisms\footnote{This subgroup was introduced as the \emph{approximate center} by Willis in \cite{Willis2023}.}.
\end{theorem}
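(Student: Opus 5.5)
The plan is to translate the statement into one about conjugation-invariant measures on elliptic elements, prove it for a single $g$ by Poincaré recurrence, and then intersect over all $g$. The step I do not yet know how to close is the lemma at the end of the second paragraph.

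\emph{Reduction to a single $g$.} For $g\in G$, recall that $\lev(g)$ is the set of $x\in G$ whose two-sided orbit $\{g^nxg^{-n}: n\in\bbZ\}$ is relatively compact. By Willis' theory this is a closed subgroup of $G$. Since $\ncm$ is the smallest closed subgroup $N$ with $\mu(\Sub(N))=1$, it suffices to show, for each fixed $g$, that $\mu$-almost every $H$ satisfies $H\le\lev(g)$. Intersecting over all $g$ then gives the statement; no countability is needed, because the inclusion $\ncm\le\lev(g)$ is proved separately for each $g$.

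\emph{Passing to elements.} Let $\nu$ be the conjugation-invariant probability measure on $G$ from Proposition~\ref{AveragingMu}: sample $H\sim\mu$, then a Haar-random $h\in H$. If $\nu(\lev(g))=1$, then for $\mu$-a.e.\ $H$ the closed subgroup $H\cap\lev(g)$ has full Haar measure in the compact group $H$. A closed subgroup of full Haar measure has finite index and measure $1$, so it equals $H$, and hence $H\le\lev(g)$. So the goal becomes $\nu(\lev(g))=1$.

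\emph{Recurrence.} The map $T(x)=gxg^{-1}$ preserves $\nu$. Given $\epsilon>0$, choose a compact $C\subseteq G$ with $\nu(C)>1-\epsilon$. By Poincaré recurrence, applied to both $T$ and $T^{-1}$, $\nu$-a.e.\ $x\in C$ satisfies $T^n(x)\in C$ for infinitely many $n\to+\infty$ and for infinitely many $n\to-\infty$. Letting $\epsilon\to0$, it suffices to prove the following lemma.

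\emph{Key lemma.} If $x$ is elliptic and $g^nxg^{-n}\in C$ for infinitely many positive $n$ and infinitely many negative $n$, where $C$ is compact, then $x\in\lev(g)$.

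This lemma is where I expect the main difficulty, since recurrence alone does not give relative compactness of the whole orbit. The first approach I would try:
\begin{enumerate}[label=(\alph*)]
\item Take $U$ compact open and tidy for $g$, with $U=U_+U_-$ and $U_{++}=\bigcup_{n\ge0}g^nU_+g^{-n}$.
\item Put $K=\overline{\langle x\rangle}$, which is compact because $x$ is elliptic, and replace $C$ by the finite union of cosets $CU$.
\item Consider the compact subgroups $K_n=g^nKg^{-n}$ and track the displacement of $K_n$ relative to $U$, for instance via the index $[K_n : K_n\cap U]$ or the position of $K_nU$ in $G/U$.
\item Use the tidiness identities, namely that $g$ expands $U_+$ and contracts $U_-$, to show this displacement is monotone outside a bounded window.
\item Conclude: returning to $CU$ at arbitrarily large positive and negative times forces the displacement to be bounded for all $n$. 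Then $\{g^nxg^{-n}\}$ stays in finitely many $U$-cosets, which gives both $x\in\operatorname{par}(g)$ and $x\in\operatorname{par}(g^{-1})$.
\end{enumerate}
If this direct argument fails, the fallback is to use the Baumgartner--Willis description of $\operatorname{par}(g)$ and the contraction group, and show that any unbounded forward orbit of an elliptic element must escape every compact set eventually. Such eventual escape would contradict recurrence.
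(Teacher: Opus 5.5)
Your outline is the paper's proof. You pass to the conjugation-invariant measure $\nu$ of Proposition~\ref{AveragingMu} and apply Poincar\'e recurrence to $h\mapsto ghg^{-1}$. This shows that $\nu$-almost every element lies in $\mathrm{par}(g)$, and, running the same argument with $g^{-1}$, also in $\mathrm{par}(g^{-1})$, hence in $\lev(g)=\mathrm{par}(g)\cap\mathrm{par}(g^{-1})$. You then transfer back to $\mu$.

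The gap is the key lemma you leave open, but it is not a new lemma. It is exactly the nontrivial half of Willis' Proposition~3 in \cite{Willis1994}. The paper records this in its definition of $\mathrm{par}(g)$ as the equality of the two descriptions: for \emph{every} $h\in G$, if $\{g^nhg^{-n}\mid n\ge 0\}$ has an accumulation point, then its closure is compact. Recurrence into a fixed compact $C$ along infinitely many $n\to+\infty$ gives such an accumulation point, or else the orbit is finite. Either way $h\in\mathrm{par}(g)$, and the same argument for $g^{-1}$ gives $h\in\mathrm{par}(g^{-1})$. Ellipticity of $h$ plays no role. Your ``fallback'' statement, that an unbounded forward orbit must eventually leave every compact set, is precisely the contrapositive of Willis' result.

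As written, your sketch (a)--(e) does not establish this, because the monotonicity claimed in (d) is asserted rather than derived from the tidiness identities. So the proposal is incomplete on its own terms. Citing Willis closes the gap with no further work, which is what the paper does.

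Your final transfer step differs slightly from the paper's and is also correct. You use that $m_H(H\cap\lev(g))=1$ for $\mu$-a.e.\ $H$, and that a closed subgroup of full Haar measure in a compact group is the whole group. The paper instead uses $\bigcup\supp(\mu)\subseteq\supp(\nu)\subseteq\mathrm{par}(g)$ together with closedness of $\mathrm{par}(g)$.
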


\paragraph{Acknowledgements. }
We are indebted to Pierre-Emmanuel Caprace for proposing this problem and for many helpful discussions about it.
We thank Romain Tessera and Tianyi Zheng for valuable discussions and insightful comments.
The first author is co-funded by the European Union (ERC, Function Fields, 101161909).
The second author thanks the Deutsche Forschungsgemeinschaft (DFG, German Research Foundation) for support via the CRC TRR 358 – Project-ID 491392403.
Part of this work was completed when the fourth author was visiting UCSD, she thanks Tianyi Zheng for her hospitality. Her visit was financially partly supported by NSF DMS 2348143. Also, she was a F.R.S.-FNRS postdoctoral researcher.
\section{Preliminaries}
Given a locally compact group $G$, we denote by $\Sub(G)$ the space of closed subgroups of $G$, endowed with the \emph{Chabauty topology}. A subbasis for this topology is given by:
\begin{align*}
    \Hit(V) := \{H \leq G \mid H \cap V \neq \emptyset \} &\qquad \quad V \subset G \text{ open} \\
    \Miss(K) := \{H \leq G \mid H \cap K = \emptyset \} &\qquad \quad K \subset G \text{ compact}.
\end{align*}
This space is compact and Hausdorff. If $G$ is second countable, then it is also metrizable. For an introduction, see \cite{Gelander2018}.

\begin{lemma}
\label{lem:non-ergodic}
Let $G$ be an lcsc group. 
If every ergodic compact IRS is contained in a compact normal subgroup, then every compact IRS is contained in $\overline{\calW(G)}$.
The converse holds if $\calW(G)$ is closed.
\end{lemma}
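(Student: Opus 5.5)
Both directions rest on the ergodic decomposition of invariant measures on the compact metrizable $G$-space $\Sub(G)$. Since $G$ is lcsc, every $G$-invariant Borel probability measure $\mu$ on $\Sub(G)$ can be written as $\mu=\int \mu_x\,d\beta(x)$ with $\beta$-almost every $\mu_x$ an ergodic invariant probability measure. If $\mu$ is compact, i.e.\ $\mu(\calK)=1$ where $\calK\subset\Sub(G)$ is the Borel set of compact subgroups, then $\mu_x(\calK)=1$ for $\beta$-a.e.\ $x$. So almost every ergodic component is a compact ergodic IRS. We also use that $\calK$ is Borel; it is an increasing countable union of the closed sets $\{H : H\subseteq K_n\}$ for an exhaustion $K_n$ of $G$ by compact sets.

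\emph{Forward direction.} Assume every ergodic compact IRS $\nu$ has $\overline{\langle\nu\rangle}$ contained in a compact normal subgroup $N_\nu$. Then $\nu(\Sub(N_\nu))=1$, and $N_\nu\subseteq\calW(G)$. Hence $\nu(\Sub(\overline{\calW(G)}))=1$. The set $\Sub(\overline{\calW(G)})$ is closed in the Chabauty topology, since it equals $\bigcap\Miss(K)$ over compact $K$ disjoint from $\overline{\calW(G)}$ (equivalently, its complement is a union of sets $\Hit(V)$). In particular it is Borel, and integrating over the decomposition gives
\[
\mu(\Sub(\overline{\calW(G)}))=\int \mu_x(\Sub(\overline{\calW(G)}))\,d\beta(x)=1 .
\]
By minimality of the normal closure, $\overline{\langle\mu\rangle}\le\overline{\calW(G)}$.

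\emph{Converse.} Assume $\calW(G)$ is closed and every compact IRS satisfies $\overline{\langle\mu\rangle}\le\calW(G)$. Let $\mu$ be ergodic and compact, so $\mu$ is concentrated on compact subgroups of $\calW(G)$.
\begin{enumerate}
\item $\calW(G)$ is a directed union of compact normal subgroups, because a product of two compact normal subgroups is again compact and normal. Since $G$ is second countable, $\calW(G)$ is $\sigma$-compact.
\item Choose an increasing sequence of compact normal subgroups $N_1\le N_2\le\cdots$ with $\bigcup_n N_n=\calW(G)$. To do this, cover each compact piece of a $\sigma$-compact exhaustion by finitely many compact normal subgroups, which is possible by a compactness argument.
\item The sets $A_n=\Sub(N_n)$ are $G$-invariant because $N_n$ is normal, and closed. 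They increase, and
\[
\bigcup_n A_n \supseteq \{H\in\Sub(\calW(G)) : H \text{ compact}\}.
\]
Indeed, a compact $H\subseteq\bigcup N_n$ already lies in some $N_n$, because each $N_n$ is open in $\calW(G)$.
\item Consequently $\mu(A_n)\to1$. By ergodicity each $\mu(A_n)\in\{0,1\}$, so $\mu(A_n)=1$ for some $n$, and $\overline{\langle\mu\rangle}\le N_n$, which is a compact normal subgroup.
\end{enumerate}

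The main obstacle is the openness claim used in step 3, namely that one can take the $N_n$ open in $\calW(G)$. The route is to apply Baire's theorem to the closed, hence locally compact, group $\calW(G)$. It is the countable union of the closed subgroups $N_n$, so some $N_n$ has nonempty interior and is therefore open in $\calW(G)$. All larger $N_m$ are then open as well, since they contain the open subgroup $N_n$. After discarding finitely many terms, every $N_n$ is open. A compact subgroup $H\le\calW(G)$ is covered by the increasing open sets $N_n$, so by compactness $H$ lies in a single $N_n$. This is exactly where closedness of $\calW(G)$ is needed.
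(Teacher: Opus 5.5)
Your forward direction is correct and is the paper's argument: ergodic decomposition, closedness of $\Sub(\overline{\calW(G)})$, then integrate. In the converse, steps 3--4 also match the paper verbatim, provided $\calW(G)=\bigcup_n N_n$ with $N_1\le N_2\le\cdots$ compact, normal in $G$ and open in $\calW(G)$. But the existence of that exhaustion is the real content of the converse. The paper imports it from Cornulier ((1) in the proof of Thm.~2.8), and your derivation of it is circular.

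\textbf{The gap is step 2.} You cover a compact piece $C\subseteq\calW(G)$ by finitely many compact normal subgroups ``by a compactness argument''. The compact normal subgroups containing points of $C$ are closed and, a priori, have empty interior in $\calW(G)$. They are not an open cover, so no finite subcover can be extracted. Openness only appears in your last paragraph, via Baire applied to the countable family that step 2 was supposed to produce.

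Directedness does not rescue this. In $(\bbZ/2)^{\bbN}$, fix a non-principal ultrafilter $\mathcal{U}$ on $\bbN$ and set $H_A=\{x : x|_A \text{ is constant}\}$ for $A\in\mathcal{U}$.
\begin{itemize}
\item Each $H_A$ is a closed subgroup of infinite index, hence nowhere dense.
\item The family is directed, since $H_A\cup H_B\subseteq H_{A\cap B}$.
\item It covers: every $x$ lies in $H_A$ for $A\in\{x^{-1}(0),x^{-1}(1)\}\cap\mathcal{U}$.
\end{itemize}
So a compact group can be a directed union of compact subgroups, none open and no countably many covering it. Some genuine input about compact \emph{normal} subgroups is needed.

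\textbf{How to repair it.} Either cite the structural result as the paper does, or run Baire on sets defined without presupposing the answer. Take compact $C_n$ with $C_n\subseteq\operatorname{int}C_{n+1}$ and $\bigcup_n C_n=G$, and let
\[
F_n=\{x\in\calW(G) : \overline{\langle x^G\rangle}\subseteq C_n\}.
\]
\begin{itemize}
\item Each $F_n$ is closed, because a limit of elements whose normal closure lies in $C_n$ has the same property. Hence $F_n$ is compact; it is also conjugation invariant.
\item The $F_n$ cover $\calW(G)$, so by Baire some $F_n$ has nonempty interior in the closed subgroup $\calW(G)$.
\item You still need $\overline{\langle F_n\rangle}$ to be compact, i.e.\ local ellipticity of $\calW(G)$. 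This is a genuine extra input, which the paper takes as known in the form $\overline{\calW(G)}\le\calE(G)$.
\item Granting it, $N_0=\overline{\langle F_n\rangle}$ is a compact normal subgroup open in $\calW(G)$.
\item Then $\calW(G)/N_0$ is a countable discrete group that is a directed union of finite normal subgroups of $G/N_0$. An ascending exhaustion of it by finite normal subgroups pulls back to the $N_n$ you want.
\end{itemize}
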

\begin{proof}
The first part is immediate from the ergodic decomposition theorem \cite[Theorem 3.22]{Glasner2003}. 
To see that the converse holds if $\calW(G)$ is closed, observe that, by \cite[(1) in the proof of Thm.\ 2.8]{Cornulier_commability}, $\calW(G)$ can be written as an ascending union $\calW(G) = \bigcup_{n \in \bbN} O_n$ of compact relatively open subgroups $O_n\triangleleft G$. It follows that $\{K \le \calW(G) \mid K \text{ compact}\} = \bigcup \Sub(O_n)$, so, by ergodicity, there exists $n$ such that $\mu(\Sub(O_n))=1$.
\end{proof}
If $G$ is an lcsc group, we say that a subgroup $H\le G$ is \emph{cofinite} if there is a Borel $G$-invariant probability measure on $G/H$.
\begin{lemma}
\label{lem:supp-confinite}
Let 
$H\le G$ be a subgroup. Then the normaliser of $H$ is cofinite 
if and only if there is an IRS that is supported on its conjugacy class; that is, an IRS $\mu$ such that 
\[
\mu\left(\left\{ gHg^{-1}\middle|g\in G\right\} \right)=1.
\]
\end{lemma}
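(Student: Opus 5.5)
The conjugacy class of $H$ is the orbit $G\cdot H$ of $H$ under the conjugation action of $G$ on $\Sub(G)$. The stabiliser of $H$ under this action is $N:=N_G(H)$, so the orbit map $\phi\colon G/N\to\Sub(G)$, $gN\mapsto gHg^{-1}$, is a continuous $G$-equivariant bijection onto $\calC:=\{gHg^{-1}\mid g\in G\}$. The plan is to transport invariant probability measures back and forth along $\phi$.

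First, $N$ is closed, so $G/N$ is an lcsc space. Indeed, if $g_n\to g$ with $g_n\in N$, then $g_nHg_n^{-1}=H$ converges to $gHg^{-1}$ in the Chabauty topology, by continuity of the conjugation action on $\Sub(G)$. Since $\Sub(G)$ is Hausdorff, $gHg^{-1}=H$. (If $H$ is not assumed closed, replace $H$ by $\overline{H}$ throughout: elements of $\Sub(G)$ are closed, and the statement is only meaningful for closed $H$.) Next, $\calC=\phi(G/N)$ is Borel. Writing $G=\bigcup_n K_n$ with $K_n$ compact, the sets $\phi(K_nN/N)$ are compact, so $\calC$ is $\sigma$-compact. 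Moreover, $\phi$ is a continuous injection from a standard Borel space, so by Lusin--Souslin it is a Borel isomorphism onto its image.

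For the forward direction, suppose $\lambda$ is a $G$-invariant Borel probability measure on $G/N$. Set $\mu:=\phi_*\lambda$. It is conjugation-invariant by equivariance of $\phi$, and $\mu(\calC)=\lambda(G/N)=1$, so $\mu$ is an IRS supported on the conjugacy class. For the converse, let $\mu$ be an IRS with $\mu(\calC)=1$. Restrict $\mu$ to $\calC$ and set $\lambda:=(\phi^{-1})_*\mu$, which is a Borel probability measure on $G/N$ because $\phi^{-1}\colon\calC\to G/N$ is Borel. Since $\phi^{-1}$ is equivariant and $\calC$ is $G$-invariant, $\lambda$ is $G$-invariant. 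Hence $N$ is cofinite.

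The main obstacle is the measurability of $\phi^{-1}$, i.e.\ that the orbit is Borel and that the bijection is a Borel isomorphism. The forward direction needs none of this. I would settle the converse by citing Lusin--Souslin. Alternatively, one can use the classical fact (Effros/Glimm) that for an lcsc group acting continuously on a Polish space, the orbit map $G/\Stab(x)\to G\cdot x$ is a Borel isomorphism onto a Borel set.
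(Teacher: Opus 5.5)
Your proposal is correct and takes the same approach as the paper. Both transport invariant probability measures along the $G$-equivariant orbit map $gN_G(H)\mapsto gHg^{-1}$, viewed as an isomorphism of measurable $G$-spaces between $G/N_G(H)$ and the conjugacy class. The paper asserts this isomorphism without justification; your closedness of $N_G(H)$, Borelness of the orbit, and Lusin--Souslin argument supply exactly the measurability details it omits.
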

\begin{proof}
Denote by $N=N_G(H)$ the normalizer of $H$ in $G$. The map $gN\mapsto gHg^{-1}$ serves as an isomorphism of measurable $G$-spaces between the quotient $G/N$ (with $G$ acting by left multiplication) and the subset $\{gHg^{-1} ~|~ g\in G\}\subset \Sub(G)$ (with $G$ acting by conjugation). 
Therefore, the existence of a $G$-invariant probability measure on $G/N$ is equivalent to the existence of an IRS of $G$, supported on $\{gHg^{-1} ~|~ g\in G\}$.
\end{proof}

Let $\calM(G)$ denote the space of Borel probability measures on $G$. For a compact subgroup $H\le G$, let $m_H\in \calM(G)$ be the normalised Haar probability measure on $H$, viewed as a measure on $G$ supported on $H$.

\begin{lemma}\label{Borel}
    Let $\mathcal{C}\subseteq\mathrm{Sub}(G)$ be the set of compact subgroups. Then $\calC$ is measurable and
     the map  $\calC \to \calM(G), H\mapsto m_{H}$ is Borel measurable.
\end{lemma}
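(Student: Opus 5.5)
Two things need proving. First, the set $\calC$ of compact subgroups is a Borel subset of $\Sub(G)$. Second, $H\mapsto m_H$ is Borel, where $\calM(G)$ carries the weak-$*$ topology (tested against $C_c(G)$), or equivalently the Borel structure generated by the maps $m\mapsto \int f\,dm$.

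For measurability of $\calC$, I will exhaust $G$ by compacts. Fix an increasing sequence of compact sets $K_1\subseteq K_2\subseteq\cdots$ with $K_n\subseteq \mathrm{int}(K_{n+1})$ and $\bigcup_n K_n=G$; this exists since $G$ is lcsc. A closed subgroup $H$ is compact if and only if $H\subseteq K_n$ for some $n$. The condition $H\subseteq K_n$ says that $H$ misses the open set $G\setminus K_n$. Write $G\setminus K_n$ as a countable union of compact sets $L_{n,j}$. Then
\[
\{H : H\subseteq K_n\}=\bigcap_j \Miss(L_{n,j}),
\]
which is closed. Hence $\calC=\bigcup_n\bigcap_j \Miss(L_{n,j})$ is $F_\sigma$, and in particular Borel.

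For Borel measurability of $H\mapsto m_H$, it suffices to show that $H\mapsto \int f\,dm_H$ is Borel for each $f\in C_c(G)$. Indeed, $C_c(G)$ has a countable sup-norm-dense subset, so these functionals generate the Borel structure on $\calM(G)$. My plan is to prove a stronger statement: on each closed piece $\calC_n:=\{H: H\subseteq K_n\}$, the map $H\mapsto m_H$ is continuous. Then $H\mapsto m_H$ is Borel on the countable union $\calC=\bigcup_n \calC_n$, since each piece is closed.

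The continuity is the heart of the argument. Let $H_k\to H$ in $\calC_n$. The measures $m_{H_k}$ are all supported in the compact set $K_n$, so by Prokhorov every subsequence has a further subsequence converging weakly to some probability measure $m$ on $K_n$. I need to identify $m=m_H$, and I will use two facts.
\begin{enumerate}
\item $\supp(m)\subseteq H$. If $x\notin H$, Chabauty convergence gives a neighbourhood $U$ of $x$ with $\overline U$ compact that eventually misses $H_k$. Hence $m_{H_k}(U)=0$ for large $k$, and by the portmanteau theorem $m(U)\le\liminf_k m_{H_k}(U)=0$.
\item $m$ is invariant under left translation by every $h\in H$. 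Chabauty convergence gives $h_k\in H_k$ with $h_k\to h$. For $f\in C_c(G)$ we have $\int f(h_kx)\,dm_{H_k}(x)=\int f\,dm_{H_k}$. Uniform continuity of $f$ and $h_k\to h$ then let us pass to the limit, giving $\int f(hx)\,dm(x)=\int f\,dm$.
\end{enumerate}
So $m$ is a left-invariant probability measure on the compact group $H$. By uniqueness of Haar measure, $m=m_H$. Since every subsequence has a further subsequence converging to the same limit $m_H$, the whole sequence converges: $m_{H_k}\to m_H$.

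The main obstacle is this identification of the limit, specifically the uniform-continuity step in fact 2. It needs care in a non-abelian group: I will use that $f$ is left uniformly continuous, and that the translates $x\mapsto f(h_kx)$ are supported in a common compact set, which holds because the $h_k$ eventually lie in a compact neighbourhood of $h$. Everything else is routine.
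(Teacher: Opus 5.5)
Your argument is correct. The measurability of $\calC$ is handled exactly as in the paper, but for $H\mapsto m_H$ you take a different route.

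The paper fixes $f\in C_c(G)$ with $f(1)=1$ and quotes Claim A.2 of Biringer--Tamuz. That claim says the map $H\mapsto m_f(H)$, sending $H$ to its Haar measure normalised by $\int f\,dm_f(H)=1$, is continuous on all of $\Sub(G)$. The paper then recovers $m_H=m_f(H)/m_f(H)(H)$, where the total mass $m_f(H)(H)=\sup_n m_f(H)(K_n)$ is merely Borel in $H$.

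You instead prove from scratch that $H\mapsto m_H$ is continuous on each stratum $\calC_n=\{H\colon H\subseteq K_n\}$. You use Prokhorov, Chabauty approximation of points of $H$ by points of $H_k$, and uniqueness of Haar measure; the uniform bound on supports is what prevents loss of mass. Your version is self-contained and in effect reproves the compact-support case of the cited claim. The paper's version is shorter, but it leans on that external result and on a normalisation that also makes sense for non-compact subgroups.

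The stratification is genuinely needed in your approach, because $H\mapsto m_H$ is not continuous on all of $\calC$. In the discrete group $\bigoplus_{\bbN}\bbZ/2\bbZ$, the subgroups $\langle e_k\rangle$ Chabauty-converge to $\{0\}$, but $\frac12(\delta_0+\delta_{e_k})$ does not converge to $\delta_0$.

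One small slip: for compact $L$ the set $\Miss(L)$ is \emph{open} in the Chabauty topology, so $\bigcap_j\Miss(L_{n,j})$ is a priori only a $G_\delta$. The set $\calC_n$ is nonetheless closed, since its complement is $\Hit(G\setminus K_n)$. In any case, your gluing step only needs each $\calC_n$ to be Borel.
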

\begin{proof}
To show that $\calC$ is measurable, recall that $G$ is $\sigma$-compact and write $G = \bigcup_{n} O_n$, where $O_0 \subset O_1 \subset \dots$ and $O_n \subset G$ is open and has compact closure. Write $K_n := \overline{O_n}$.
Then $\calC = \bigcup_{n} \{H \in \Sub(G) \mid H \subset K_n\}$ is a countable union of closed subsets.

Fix a continuous nonnegative compactly supported function $f:G\to \bbR$
with $f(1)=1$. For each closed subgroup $H\le G$, let $m_{f}(H)\in\mathcal{M}(G)$
denote the unique left Haar measure on $H$ normalized by $\int_{G}f\,dm_{f}(H)=1.$
By \cite[Claim A.2]{MR3624401}, the map $H\mapsto m_{f}(H)$ from $\mathrm{Sub}(G)$
to $\mathcal{M}(G)$ is continuous. 

For each $n$, the map $H\mapsto m_{f}(H)(K_{n})$ (from $\mathrm{Sub}(G)$
to $[0,\infty)$) is Borel, since $H\mapsto m_{f}(H)$ is continuous
and, for a fixed compact subset $C$, the evaluation map $\mu\mapsto\mu(C)$
from $\mathcal{M}(G)$ to $[0,\infty)$ is Borel. Thus, the map 
\begin{align*}
\mathcal{C} & \longrightarrow(0,\infty)\\
H & \longmapsto {m_{f}(H)(H)}
\end{align*}
is Borel, since $m_{f}(H)(H)=\sup_{n}m_{f}(H)(K_{n}),$ and the supremum
of a countable collection of Borel maps is still Borel. Now, for $H\in\mathcal{C}$,
we have 
\[
m_{H}=\frac{1}{m_{f}(H)(H)}\,m_{f}(H),
\]
so $H\mapsto m_{H}$ is Borel. 
\end{proof}

Recall that an element $g\in G$ is called \emph{elliptic} if $\overline{\langle g\rangle}$ is compact. Given a compact IRS $\mu$, we define a conjugation invariant probability measure $\nu$ on $G$ that is supported on elliptic elements.
\begin{proposition}\label{AveragingMu}
Let $\mu$ be a compact IRS of an lcsc group $G$. Then the formula for $\nu$ on $G$ given by, for every Borel $A\subseteq G$,
\[
\nu(A)=\int_{\Sub(G)}m_{H}(A\cap H)\mathrm{d}\mu(H)
\]
defines a conjugation invariant probability measure on $G$.
Moreover, $\bigcup\supp\mu\subseteq\supp\nu$.
\end{proposition}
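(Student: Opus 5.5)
The argument has three parts: show that $\nu$ is well defined as a probability measure, prove conjugation invariance by a change of variables in the Haar measures, and obtain the support inclusion from a positivity estimate.

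\emph{Well-definedness.} Since $\mu$ is compact, $\mu(\calC)=1$, where $\calC$ is the measurable set of compact subgroups from Lemma~\ref{Borel}. So the integral may be taken over $\calC$. Fix a Borel set $A\subseteq G$. The integrand is
\[
H\mapsto m_H(A\cap H)=m_H(A),
\]
and this equality holds because $m_H$ is supported on $H$. By Lemma~\ref{Borel}, $H\mapsto m_H$ is Borel into $\calM(G)$. For a fixed Borel set $A$, the evaluation $\lambda\mapsto\lambda(A)$ is Borel on $\calM(G)$ with its standard Borel structure. Hence the integrand is measurable and takes values in $[0,1]$, so $\nu(A)$ is defined.

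\emph{Probability measure.} Countable additivity follows from monotone convergence applied to the additive functions $A\mapsto m_H(A)$. Moreover $\nu(G)=\int m_H(G)\,d\mu=1$.

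\emph{Conjugation invariance.} Fix $g\in G$ and write $c_g(x)=gxg^{-1}$. For a compact subgroup $H$, the pushforward $(c_g)_*m_H$ is a left-invariant probability measure on $gHg^{-1}$. By uniqueness of normalised Haar measure it equals $m_{gHg^{-1}}$. Therefore
\[
\nu(g^{-1}Ag)=\int m_H(g^{-1}Ag)\,d\mu(H)=\int m_{gHg^{-1}}(A)\,d\mu(H)=\int m_{H}(A)\,d\mu(H)=\nu(A).
\]
The third equality uses the conjugation invariance of $\mu$, applied to the bounded measurable function $H\mapsto m_H(A)$.

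\emph{Support inclusion.} Let $x\in H_0$ for some $H_0\in\supp\mu$, and let $U$ be an open neighbourhood of $x$. We must show $\nu(U)>0$. Choose a symmetric open identity neighbourhood $V$ with $xV^2\subseteq U$, and set $W=xV$. The set $\Hit(W)$ is an open neighbourhood of $H_0$, so $\mu(\Hit(W))>0$.

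Take $H\in\Hit(W)\cap\calC$ and pick $h\in H\cap xV$. Then $h(V\cap H)\subseteq xV^2\cap H\subseteq U\cap H$. By left invariance of $m_H$,
\[
m_H(U)\ge m_H(h(V\cap H))=m_H(V\cap H),
\]
which is positive because $V\cap H$ is a nonempty open subset of $H$. Thus the integrand is strictly positive on a set of positive $\mu$-measure. Combined with measurability from the first step, this gives $\nu(U)>0$. Hence $x\in\supp\nu$.

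The main obstacle is this last step: one has to get a positive lower bound holding uniformly over a positive-measure set of subgroups. The left translation by $h$ reduces the question to strict positivity pointwise in $H$, and pointwise positivity suffices because a function that is strictly positive on a set of positive measure has positive integral.
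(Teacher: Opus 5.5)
Your proof is correct and follows essentially the same route as the paper: measurability via Lemma~\ref{Borel}, conjugation invariance from invariance of $\mu$ (which you spell out via uniqueness of Haar measure), and the support inclusion from $m_H(U\cap H)>0$ for $H\in\Hit(U)$, which you argue directly where the paper argues by contrapositive. The left-translation step with $V$ and $h$ is harmless but unnecessary: for $H\in\Hit(U)$ the set $U\cap H$ is already a nonempty open subset of $H$, so you could simply take $W=U$.
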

\begin{proof}
By Lemma \ref{Borel}, for any Borel $A\subseteq G$, the map $H\mapsto m_{H}(A)=m_{H}(A\cap H)$
is Borel. Therefore, we may define $\nu(A)=\int_{\Sub(G)}m_{H}(A\cap H)\mathrm{d}\mu.$
It is conjugation invariant since $\mu$ is conjugation invariant.
Now, suppose $g\notin\supp\nu$, and let us prove $g\notin\bigcup\supp\mu$.
The fact $g\notin\supp\nu$ means there is an open neighbourhood
$V$ of $g$ such that $\nu(V)=0$. This means that $\mu(\mathrm{Hit}(V))=0$: If $\mu(\mathrm{Hit}(V))$ were positive, then $\nu(V)=\int_{\mathrm{Sub}(G)}m_{H}(V\cap H)$
would have been positive, since $m_{H}(V\cap H)>0$ for every $H\in\mathrm{Hit}(V)$.
Thus, $\mu(\mathrm{Miss}(V))=1$. The latter is a closed subset of measure $1$, so it contains $\supp(\mu)$. Since $g\in V$, this means  $g\notin\bigcup\supp\mu$.
\end{proof}

\begin{remark}
    Conversely, given a conjugation invariant probability measure $\nu$ on $G$ that is supported on the set of elliptic elements, the map $G \to \Sub(G), g \mapsto \overline{\langle g \rangle}$ defines a compact IRS. It is not hard to check that this map is measurable, and Hofmann--Willis show that it is continuous if and only if $G$ is totally disconnected \cite{HofmannWillis2015}. 
\end{remark}

The following generalization of a lemma by U\v{s}akov \cite{usakov1970classes} and Wang \cite{wang1971compactness} can be found in \cite{Caprace_Goffer_Lederle_Tsankov_2025}.
\begin{lemma}\label{Usakov}
    Let $H\le G$ be a compact subgroup such that $\{gHg^{-1}\mid g\in G\}$ is a closed subset of $\Sub(G)$. Then $\{gHg^{-1}\mid g\in G\}$ is contained in a compact normal subgroup of $G$.
\end{lemma}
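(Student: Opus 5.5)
The plan is to show that the closed subgroup generated by the union of all conjugates of $H$ is compact. Set $\calC_H:=\{gHg^{-1}\mid g\in G\}$, which by hypothesis is a closed subset of the compact space $\Sub(G)$, hence compact. First I reduce to showing that the union $U:=\bigcup_{g\in G} gHg^{-1}$ is relatively compact in $G$.

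\textbf{Step 1: $U$ is relatively compact.} Suppose not. Choose $g_n\in G$ and $h_n\in g_nHg_n^{-1}$ escaping every compact set. After passing to a subsequence, $g_nHg_n^{-1}\to L$ in $\calC_H$, so $L=g_0Hg_0^{-1}$ is compact. Fix a compact identity neighbourhood $V$ with $L V$ relatively compact, and choose an open $W\supseteq LV$ with compact closure. The contradiction should come from the fact that all the subgroups $g_nHg_n^{-1}$ are isomorphic (as topological groups) to the compact group $H$, and eventually they are Chabauty-close to $L\subseteq W$. The key claim is that a compact subgroup $K$ Chabauty-close to $L$ must satisfy $K\subseteq W$. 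This is the main obstacle, since Chabauty closeness only controls $K$ on compact sets. To handle it I would argue via connectedness/homogeneity of the Haar measure:
\begin{itemize}
\item Eventually $K_n:=g_nHg_n^{-1}$ misses the compact "annulus" $\overline{W'}\setminus W$, where $W'\supset \overline{W}$.
\item Then $K_n\cap W$ is an open-and-closed subset of $K_n$ containing $1$ and nearly equal to $L$.
\item Using the standard fact that an open subset of a group lying in a small neighbourhood of a compact subgroup and closed under products when contained in $W$ is a subgroup, I show $K_n\cap W$ is an open subgroup of $K_n$.
\end{itemize}
Alternatively, one could invoke the cited Caprace--Goffer--Lederle--Tsankov argument directly. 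Using Haar measures, $m_{K_n}\to$ the normalized Haar measure on $L$, by continuity of $H\mapsto m_f(H)$ as in Lemma~\ref{Borel}. Comparing normalizations shows that the index $[K_n:K_n\cap W]$ equals $1$ eventually, because all $K_n\cong H$ have the same total $f$-mass behaviour. This gives $K_n\subseteq W$, so $h_n\in W$, contradicting the escape.

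\textbf{Step 2: conclude.} Once $\overline{U}$ is compact and conjugation-invariant, I apply the classical Ušakov–Wang idea: a group generated by a conjugation-invariant relatively compact set consisting of elements of a compact subgroup family. Concretely, $M:=\overline{\langle U\rangle}$ is normal. To see that it is compact, I would note that $\calC_H$ compact makes the set $\{m_K : K\in\calC_H\}$ weak-$*$ compact. Averaging these gives a probability measure $\nu$ (via an invariant mean or a limit of averages) supported on $\overline{U}$ and invariant under left translation by every element of $U$: each $m_K$ is $K$-invariant. One then shows $\nu$ is invariant under the group $M$, and a group admitting an invariant probability measure supported on a compact set containing it densely is compact. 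The invariance passage, showing that a measure on $\overline U$ invariant under each $K$ exists, would be done by iterating convolutions $m_{K_1}*\cdots*m_{K_r}$ and taking a weak limit, which stays inside $\overline{U}^{\,r}$. This is exactly why the first step is the crux: one needs $\overline{U}^{\,r}$ uniformly bounded, which I would get by showing $\overline{\langle U\rangle}=\overline{U}^{\,r}$ for some $r$ via a Baire/measure argument on the increasing sequence $\overline U^{\,r}$.
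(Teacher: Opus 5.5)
Both steps have genuine gaps, and the emphasis is inverted: Step~1 follows from a short Baire argument, while Step~2 is the actual content of the lemma.

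\textbf{Step 1.} Your argument only uses that $K_n=g_nHg_n^{-1}$ is \emph{isomorphic} to $L$. It never uses that $K_n$ is \emph{conjugate} to $L$, and isomorphism is not enough.

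The map $K\mapsto m_K$ is only shown to be Borel (Lemma~\ref{Borel}) and is not continuous in general. What is continuous is $K\mapsto m_f(K)$. Its total mass $1/\int f(g_nhg_n^{-1})\,dm_H(h)$ depends on $n$, and vague convergence $m_f(K_n)\to m_f(L)$ lets mass escape to infinity. So nothing forces $[K_n:K_n\cap W]=1$.

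Here is a concrete counterexample. Let $G=(\bbZ/2)^{\bbN}\times\bigoplus_{\bbN}\bbZ/2$ (compact times discrete) and $L=(\bbZ/2)^{\bbN}\times\{0\}$. Let $K_n=\{(x,x_ne_n)\mid x\in(\bbZ/2)^{\bbN}\}$, where $x_n$ is the $n$-th coordinate and $e_n$ the $n$-th basis vector. Then:
\begin{itemize}
\item $K_n\cong L$ and $K_n\to L$ in $\Sub(G)$;
\item the conjugacy class $\{L\}$ is closed;
\item yet for the open set $W=L$ one has $[K_n:K_n\cap W]=2$ for every $n$.
\end{itemize}

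The hypothesis has to be used through the orbit instead. Exhaust $G=\bigcup_m C_m$ by compact sets. Then $\calC_H$ is compact metrizable and equals $\bigcup_m\{cHc^{-1}\mid c\in C_m\}$. By Baire, one of these compact sets has interior in $\calC_H$. Finitely many $G$-translates of that interior cover $\calC_H$. Hence $\calC_H=\{cHc^{-1}\mid c\in C\}$ for some compact $C$, so $N_G(H)$ is cocompact and $U=\{chc^{-1}\mid c\in C,\,h\in H\}$ is compact.

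\textbf{Step 2.} What remains is exactly the lemma of U\v{s}akov and Wang: a compact subgroup whose conjugates have relatively compact union (equivalently, whose normaliser is cocompact) lies in a compact normal subgroup. The paper gives no proof of the present statement and quotes it from Caprace--Goffer--Lederle--Tsankov as a generalisation of that lemma. The natural reduction is the Baire argument above followed by U\v{s}akov--Wang. Your sketch does not prove this part, for three reasons.
\begin{itemize}
\item An average of the measures $m_K$, $K\in\calC_H$, has no reason to be invariant under each $K$. Translating $m_K$ by $k\in K'\setminus K$ gives the normalised measure on the coset $kK$, not on a subgroup.
\item Weak limits of $m_{K_1}*\cdots*m_{K_r}$ require tightness, i.e.\ a single compact set containing all $U^r$. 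That is the conclusion itself.
\item The claim $\overline{\langle U\rangle}=\overline{U}^{\,r}$ ``by Baire'' fails twice over. Baire needs $\bigcup_r\overline U^{\,r}$ to be closed, which is not known. And even in a closed subgroup, a compact symmetric generating set need not satisfy $\langle U\rangle=U^r$ (take $\{-1,0,1\}\subseteq\bbZ$).
\end{itemize}

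The argument must exploit that $U$ is conjugation-invariant \emph{and} consists of elliptic elements. In the discrete case this is Dietzmann's lemma (a finite conjugation-invariant set of torsion elements generates a finite subgroup), proved by a collection argument. Its locally compact analogue is precisely U\v{s}akov--Wang. You need either to invoke that result or to supply a genuine topological Dietzmann argument.
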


Whenever $N \triangleleft G$ is a normal subgroup, the quotient map $G \to G/N$ induces a map $\Sub(G) \to \Sub(G/N)$ sending $H$ to the closure of $HN/N$; it is in general not continuous (unless $N$ is compact), but measurable. Given an IRS $\mu$ of $G$, it pushes forward to an IRS $\mu/N$ of $G/N$ and if $\mu$ is compact, clearly also $\mu/N$ is.

\begin{lemma}\label{lem:compact normal quotient}
    Let $G$ be an lcsc group and $N \triangleleft G$ a compact normal subgroup. Let $\mu$ be a compact IRS of $G$. If $\mu/N$ is contained in a compact normal subgroup, then $\mu$ is contained in a compact normal subgroup.
\end{lemma}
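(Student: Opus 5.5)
The plan is to lift a compact normal subgroup of $G/N$ back to $G$ through the quotient map and check that its preimage is compact and normal. Let $\pi\colon G\to G/N$ be the quotient map. By hypothesis there is a compact normal subgroup $L\triangleleft G/N$ with $(\mu/N)(\Sub(L))=1$. Set $M:=\pi^{-1}(L)$.

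First I check that $M$ is a compact normal subgroup of $G$. It is closed, since $\pi$ is continuous and $L$ is closed. It is normal, since $L$ is normal and $\pi$ is a surjective homomorphism. It is compact because $N$ is compact: the quotient map by a compact subgroup is proper, so the preimage of the compact set $L$ is compact. Concretely, cover $L$ by finitely many sets $\pi(U_i)$ with $U_i$ relatively compact open in $G$; then $M\subseteq\bigcup_i \overline{U_i}N$, which is a finite union of products of compact sets.

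Next I show that $\mu(\Sub(M))=1$. Recall that $\mu/N$ is the pushforward of $\mu$ under the measurable map $\Sub(G)\to\Sub(G/N)$, $H\mapsto\overline{HN/N}$. For $\mu$-almost every $H$, the image $\overline{HN/N}=\pi(H)$ lies in $L$. Then $H\subseteq\pi^{-1}(\pi(H))\subseteq\pi^{-1}(L)=M$. Since $\mu$ is a compact IRS, $H$ is compact, so $HN$ is already closed and no closure issue arises. In fact we do not even need this: $\pi(H)\subseteq\overline{\pi(H)}\subseteq L$ suffices. Measurability is not an issue either, because the event in question is the preimage of $\Sub(L)$, which is closed in $\Sub(G/N)$, under a measurable map.

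Hence $\mu$ is contained in the compact normal subgroup $M$. Nothing here is a real obstacle. The only point that needs care is the compactness of $\pi^{-1}(L)$, and this is exactly where compactness of $N$ is used.
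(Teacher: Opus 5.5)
Your proof is correct and is essentially the paper's argument. The paper writes the compact normal subgroup of $G/N$ as $KN/N$ and takes $KN$, which is exactly your $\pi^{-1}(L)$; you additionally spell out why this preimage is compact (the quotient by the compact $N$ is proper) and why $\mu$-almost every $H$ lies in it, which the paper leaves implicit.
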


\begin{proof}
    Let $KN/N \triangleleft G/N$ be a compact normal subgroup containing $\mu/N$. Then $KN$ is a compact normal subgroup of $G$ containing $\mu$.
\end{proof}

\section{Lie groups and almost connected groups}

Recall that an lcsc group $G$ is called \emph{almost connected} if $G/G^0$ is compact, where $G^0$ is the connected component of the identity.
In this case, we have that $\calW(G) = \calE(G)$ is a compact normal subgroup of $G$, see \cite[Example 2.3]{Cornulier_commability}.
Recall from the famous Gleason--Yamabe theorem that any
almost
connected
locally compact
group is pro-Lie; so most of this section deals with Lie groups.

\begin{theorem}\label{thm:Lie}
    In a Lie group, every ergodic compact IRS is contained in a compact normal subgroup.
\end{theorem}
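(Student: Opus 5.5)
Let $G$ be a (second countable) real Lie group and $\mu$ an ergodic compact IRS. The plan is to reduce, via Lemma~\ref{Usakov}, to showing that $\mu$ gives full measure to a single conjugacy class whose closure causes no trouble. The key feature of Lie groups to exploit is that compact subgroups are rigid. By Montgomery--Zippin, every compact subgroup $K$ has a neighbourhood $\calK$ in $\Sub(G)$ such that every compact $H\in\calK$ is conjugate, by an element near $1$, to a subgroup of $K$. As a consequence, $G$ has only countably many conjugacy classes of compact subgroups. To see this, note that $\Sub(G)$ is second countable, so countably many such neighbourhoods cover $\calC$. Each $H$ in such a neighbourhood of $K$ is conjugate into $K$. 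Moreover, a compact Lie group $K$ has only countably many conjugacy classes of closed subgroups; this is classical, since closed subgroups of a compact Lie group fall into countably many $K$-conjugacy classes.

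Step~1: countability and ergodicity. The conjugacy classes of compact subgroups are Borel sets: each is an orbit of a Polish group action, and the compact subgroups form a Borel set by Lemma~\ref{Borel}. There are countably many of them, they partition $\calC$, and each is $G$-invariant. Ergodicity of $\mu$ therefore gives a single compact subgroup $H$ with $\mu(\{gHg^{-1}\mid g\in G\})=1$. By Lemma~\ref{lem:supp-confinite}, $N_G(H)$ is then cofinite in $G$.

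Step~2: closedness of the conjugacy class. This is the step where Lemma~\ref{Usakov} needs its hypothesis, and I expect it to be the main obstacle, because the conjugacy class of a compact subgroup need not be closed in general. My approach is to pass to a closure argument. Let $\mathcal{O}$ be the conjugacy class and suppose $H_n=g_nHg_n^{-1}\to L$ in the Chabauty topology.

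First I show that $L$ is compact. The measure $\nu$ of Proposition~\ref{AveragingMu} is conjugation invariant, so $G/N_G(H)$ carries a finite invariant measure. Using this, I would argue by Poincaré recurrence that escaping conjugates cannot occur. The alternative route is to project to the adjoint representation $\Ad:G\to \GL(\frakg)$. There, the image $\Ad(H)$ is a compact subgroup, its normaliser is cofinite, and the orbit $\GL(\frakg)$-conjugates of $\Ad(H)$ are closed, since conjugacy classes of compact subgroups in an algebraic group are closed. Boundedness of $\Ad(g_nHg_n^{-1})$ then holds. One also has to treat the center/kernel of $\Ad$ separately; that kernel is a central extension of a discrete group, and there a finite normal subgroup of a cofinite normaliser analysis works.

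Once $L$ is known to be compact, $H_n\subseteq$ a fixed compact set for large $n$. Montgomery--Zippin then gives that $H_n$ is conjugate into $L$ by elements $h_n\to 1$. Since $L$ is a limit of isomorphic copies of $H$ (same dimension and component count), equality $L=h_nH_nh_n^{-1}$ follows, so $L\in\mathcal{O}$. Hence $\mathcal{O}$ is closed.

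Step~3: conclusion. Lemma~\ref{Usakov} applied to $H$ then gives a compact normal subgroup containing all conjugates of $H$, and hence containing $\ncm$.
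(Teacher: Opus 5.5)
Your Steps~1 and~3 match the paper. Step~2, which carries the whole content of the theorem, has a genuine gap. Knowing that $N_G(H)$ is cofinite does not by itself make $\{gHg^{-1}\}$ closed, since in a general Lie group cofinite subgroups need not be cocompact (non-uniform lattices). Neither of your routes supplies the missing compactness.

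\begin{itemize}
\item Poincar\'e recurrence only says that, for each fixed $g$, almost every point returns near itself under powers of $g$. It says nothing about Chabauty limits of $g_nHg_n^{-1}$ along sequences $g_nN_G(H)\to\infty$ in $G/N_G(H)$.
\item The adjoint route rests on a false claim: conjugacy classes of compact subgroups of $\GL_n(\bbR)$ are not Chabauty-closed. Closedness of conjugacy classes of semisimple \emph{elements} does not transfer to subgroups. For instance, conjugating $\mathrm{SO}(2)$ by $\mathrm{diag}(n,n^{-1})$ gives, as $n\to\infty$, the non-compact limit $\{\pm I\}\cdot U$, where $U$ is the upper unitriangular group.
\item The end of Step~2 also fails, because a compact Chabauty limit does not confine the $H_n$ to a compact set. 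In $\mathrm{SO}(2)\ltimes\bbR^2$, the pairwise conjugate subgroups generated by the rotation by $\pi$ about $p_n$, with $|p_n|\to\infty$, converge to $\{1\}$, which is not even isomorphic to them. So Montgomery--Zippin cannot be invoked there.
\item The same example refutes the Chabauty-neighbourhood form of Montgomery--Zippin in your first paragraph. Countability of conjugacy classes is still true: run your Lindel\"of argument in the Hausdorff metric on compact subsets, where $\{H\mid H\subseteq U\}$ is open for open $U$, or argue as in Theorem~\ref{thm:Lie gp has countably many subgroups up to conj}.
\item The kernel-of-$\Ad$ part is not actually argued.
\end{itemize}

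The missing idea is a structural input that upgrades ``cofinite'' to ``cocompact''. The paper obtains it as follows.
\begin{itemize}
\item By Bader--Duchesne--L\'ecureux--Wesolek, a compact (hence amenable) IRS is supported on subgroups of the amenable radical $R$.
\item In an amenable Lie group every cofinite subgroup is cocompact (Bader--Caprace--Gelander--Mozes). Hence the $R$-conjugacy class of each $gHg^{-1}$ is a continuous image of the compact space $R/N_R(gHg^{-1})$, and so is compact in $\Sub(G)$.
\item Since $R$ is normal, $g[H]_Rg^{-1}=[gHg^{-1}]_R$, so $G$ permutes the $R$-classes making up $\{gHg^{-1}\mid g\in G\}$. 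There are only countably many of them, because $R$ is again a Lie group. By invariance they all have the same $\mu$-measure, so there are only finitely many.
\item Thus the $G$-class is a finite union of compact sets, hence closed, and Lemma~\ref{Usakov} applies.
\end{itemize}
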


The proof is a combination of three results from the literature: First, there are only countably many conjugacy classes of compact subgroups, second, every amenable IRS is contained in the amenable radical and third, every cofinite subgroup of an amenable Lie group is cocompact.

\begin{theorem}\label{thm:Lie gp has countably many subgroups up to conj}
    Lie groups admit only countably many conjugacy classes of compact subgroups.
\end{theorem}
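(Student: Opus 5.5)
Since $G$ is second countable, $G/G^0$ is countable. It therefore suffices to show that $G^0$ has only countably many $G^0$-conjugacy classes of compact subgroups: every $G$-conjugacy class is a union of $G^0$-classes, and the countably many cosets $gG^0$ cannot multiply this count beyond countable. Conversely, a compact subgroup $K\le G$ meets only finitely many components of $G$, because $KG^0/G^0$ is compact and discrete. So $K^0:=K\cap G^0$ is a compact subgroup of $G^0$ of finite index in $K$, and $K$ is generated by $K^0$ together with finitely many elements $k_1,\dots,k_r$, one representative for each component of $G$ met by $K$.

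The connected case is classical. Let $G^0$ be a connected Lie group and $M\le G^0$ a maximal compact subgroup. By Cartan--Iwasawa--Malcev, every compact subgroup of $G^0$ is conjugate into $M$. The compact Lie group $M$ has only countably many conjugacy classes of closed subgroups up to $M$-conjugacy. This follows from Montgomery--Zippin: nearby closed subgroups of a compact Lie group are conjugate to subgroups of one another, and for subgroups of the same dimension and number of components they are actually conjugate. Combined with the second countability of $\Sub(M)$, this gives countably many classes. Hence $G^0$ has countably many conjugacy classes of compact subgroups.

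For the extension to general $K$, I would proceed as follows. After conjugating by an element of $G^0$, we may take $K^0$ from a fixed countable list of representatives $L$. The plan is then to fix the data $(L,\text{components met})$ and parametrize the possible $K$ by the choice of $k_i \in g_iG^0\cap N_G(L)$, which must satisfy the finitely many relations making $\langle L,k_i\rangle$ compact with identity part $L$. A cleaner route, and the one I would try first, is to apply the Montgomery--Zippin local rigidity directly in $G$. If $K_n\to K$ in the Chabauty topology with all $K_n$ compact and contained in a fixed compact neighbourhood, then for large $n$ each $K_n$ is conjugate, by an element near $1$, to a subgroup of $K$. This is the Montgomery--Zippin theorem, valid in any Lie group.

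Given this rigidity, the set of compact subgroups is a countable union of compact pieces $\{H : H\subseteq C_m\}$, where the $C_m$ are compact sets exhausting $G$. Within each piece, every subgroup has a neighbourhood in which all members are conjugate to subgroups of it. Among those, the ones with the same dimension and the same number of components are conjugate to it, since a closed subgroup of $K$ of equal dimension and equal number of components equals $K$. Covering each compact piece by finitely many such neighbourhoods, and using that there are only countably many pairs (dimension, number of components), yields countably many conjugacy classes. The main obstacle is justifying the Montgomery--Zippin neighbourhood statement in the non-compact, non-connected setting, including the Chabauty convergence bookkeeping. I would cite it as a known theorem and reduce the rest to the covering argument above.
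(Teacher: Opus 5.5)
Your ``cleaner route'', applying Montgomery--Zippin rigidity directly in $G$, is a sound strategy, but the final counting step does not give what you claim.

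\textbf{The counting gap.} A finite cover of the compact piece $\mathcal{P}_m=\{H : H\subseteq C_m\}$ by neighbourhoods centred at $K_1,\dots,K_r$ only shows that each $H\in\mathcal{P}_m$ is conjugate to a \emph{subgroup} of some $K_j$. You get $H$ conjugate to $K_j$ only when $H$ has the same dimension and number of components as $K_j$. For any other type $(d,c)$, nothing in the finite cover controls how many conjugacy classes occur. So appealing to ``countably many pairs $(d,c)$'' does not finish the count.

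The fix is easy with your own ingredients. Either of the following works:
\begin{enumerate}[label=(\alph*)]
\item Work on each stratum $\mathcal{P}_m\cap S_{d,c}$, where $S_{d,c}$ is the set of subgroups of dimension $d$ with $c$ components. Your observation shows the trace of each $G$-conjugacy class on this stratum is relatively open. A second-countable space has only countably many pairwise disjoint nonempty open sets. In other words, use Lindel\"of rather than compactness, since $S_{d,c}$ is not closed.
\item Keep the finite covers. Conclude that every compact subgroup is conjugate into one of countably many compact subgroups $K_{m,j}$. Then apply your compact-case result from the second paragraph to each $K_{m,j}$.
\end{enumerate}

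\textbf{Two smaller points.} First, the opening reduction ``it therefore suffices to treat $G^0$'' is false as stated: compact subgroups need not lie in $G^0$ (e.g.\ the reflections in $O(2)$). Your final argument does not use it, so just drop it. Second, the ``main obstacle'' you flag is not one. Montgomery--Zippin is stated for arbitrary Lie groups. The only Chabauty bookkeeping needed is that $\mathcal{P}_m\cap\Miss(C_m\setminus U)$ is a relatively open neighbourhood of $K$ in $\mathcal{P}_m$, consisting exactly of the members of $\mathcal{P}_m$ contained in $U$. Restricting to pieces is essential, since Chabauty-close compact subgroups may contain far-away elements.

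\textbf{Comparison with the paper.} Once repaired, your argument differs genuinely from the paper's. The paper never parametrises the extra components, and its argument runs as follows.
\begin{itemize}
\item A compact $K$ lies in the almost connected open subgroup $KG^0$.
\item There are only countably many such subgroups, since they correspond to finite subgroups of the countable group $G/G^0$.
\item In a Lie group with finitely many components, every compact subgroup is conjugate into a maximal compact subgroup.
\item That maximal compact subgroup is a compact Lie group with countably many conjugacy classes of closed subgroups.
\end{itemize}
The observation about $KG^0$ is exactly what your abandoned first route (choosing $k_i\in g_iG^0\cap N_G(L)$) was missing.

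Your route instead needs only local rigidity plus second countability of the Chabauty space. It treats connected and disconnected $G$ uniformly and avoids the global theory of maximal compact subgroups altogether. The price is the stratification/Lindel\"of bookkeeping described above.
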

\begin{proof}
    Let $G$ be an almost connected Lie group, and let $K$ be a maximal compact subgroup of $G$. Since $K$ is a compact Lie group, it admits only countably many closed subgroups (see \cite[Corollary 1.7.27]{Palais1960} or \cite[Proposition 2.4]{Gas}). Since every compact subgroup of $G$ is conjugate
    to a subgroup of $K$, this proves the claim for almost connected Lie groups. If $G$ is any Lie group, then every compact subgroup $K$ is contained in $KG^0$, which is an almost connected open subgroup of $G$. Since $G$ admits only countably many open subgroups, the claim follows.
\end{proof}

\begin{corollary}
\label{cor:sing-conj}Let $G$ be a Lie group and let $\mu$ be an ergodic compact IRS of $G$. Then 
there exists a compact subgroup $K\le G$ for which $\mu\left(\left\{ gKg^{-1}\middle|g\in G\right\} \right)=1.$
\end{corollary}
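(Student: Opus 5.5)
By Theorem~\ref{thm:Lie gp has countably many subgroups up to conj}, there are countably many compact subgroups $K_1, K_2, \dots$ of $G$ such that every compact subgroup of $G$ is conjugate to exactly one $K_i$. Write $\mathcal{O}_i := \{gK_ig^{-1} \mid g\in G\}\subseteq \Sub(G)$ for the corresponding conjugacy classes. Then the set $\calC$ of compact subgroups is the disjoint union $\calC=\bigsqcup_i \mathcal{O}_i$. Each $\mathcal{O}_i$ is conjugation-invariant. The plan is to show that each $\mathcal{O}_i$ is a Borel subset of $\Sub(G)$, and then conclude by ergodicity.

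For measurability, consider the continuous map $G \to \Sub(G)$, $g \mapsto gK_ig^{-1}$. Since $G$ is $\sigma$-compact, $\mathcal{O}_i$ is the image of this map and hence a countable union of compact sets. It is therefore $\sigma$-compact, so in particular Borel ($F_\sigma$). Alternatively, $\mathcal{O}_i$ is the image of $G/N_G(K_i)$ under a continuous injective map from a Polish space into a Polish space, and Lusin--Souslin gives the same conclusion. I expect this measurability step to be the only point requiring any care, and the $\sigma$-compactness argument settles it directly.

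Now $\mu(\calC)=1$ because $\mu$ is compact, and $\calC$ is measurable by Lemma~\ref{Borel}. Countable additivity then gives $\sum_i \mu(\mathcal{O}_i)=1$, so some $\mu(\mathcal{O}_i)>0$. Since $\mathcal{O}_i$ is a $G$-invariant Borel set and $\mu$ is ergodic, $\mu(\mathcal{O}_i)\in\{0,1\}$. Hence $\mu(\mathcal{O}_i)=1$, and we may take $K:=K_i$.
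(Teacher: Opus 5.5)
Your proof is correct and follows the paper's approach: you take countably many representatives of the conjugacy classes of compact subgroups via Theorem~\ref{thm:Lie gp has countably many subgroups up to conj}, and ergodicity then forces one conjugacy class to have full measure. You also check that each conjugacy class is Borel in $\Sub(G)$, via $\sigma$-compactness as the continuous image of $G$; the paper's proof uses this implicitly without stating it.
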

\begin{proof}
By Theorem \ref{thm:Lie gp has countably many subgroups up to conj}, there exists a countable collection $\left\{ K_{i}\right\} _{i\in\mathbb{N}}$ of compact subgroups, such that every compact subgroup of $G$ is conjugate
to $K_{i}$ for some $i\in\mathbb{N}$. Since $\mu$
is a compact IRS, we have:
\[
\mu\left(\bigcup_{i\in\mathbb{N}}\left\{ gK_{i}g{}^{-1}\middle|g\in G\right\} \right)=1.
\]
Since $\mu$ is ergodic, there exists $i_0\in\mathbb{N}$ such that
\(
\mu(\left\{ gK_{i_0}g{}^{-1}\middle|g\in G\right\}) =1.
\)
\end{proof}

\begin{lemma}\label{lem:AmenableLie}\label{lem:amenable-Lie}
Let $G$ be an amenable Lie group. Then every ergodic compact IRS is contained in a compact normal subgroup.
\end{lemma}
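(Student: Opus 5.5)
By Corollary \ref{cor:sing-conj}, there is a compact subgroup $K\le G$ with $\mu(\{gKg^{-1}\mid g\in G\})=1$. By Lemma \ref{lem:supp-confinite}, the normaliser $N:=N_G(K)$ is then cofinite in $G$. The plan is to show that $N$ is in fact cocompact. Once that is established, the conjugacy class $\{gKg^{-1}\}$ is the image of the compact space $G/N$ under the continuous map $gN\mapsto gKg^{-1}$, hence compact and in particular closed in $\Sub(G)$. Lemma \ref{Usakov} then gives a compact normal subgroup of $G$ containing every $gKg^{-1}$, and so containing $\mu$-almost every subgroup, as required.

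\textbf{Main step: cofinite implies cocompact.} This is where amenability of $G$ enters, through the third ingredient announced before Theorem \ref{thm:Lie}: every cofinite closed subgroup of an amenable Lie group is cocompact. First, $N$ is closed. Indeed, the normaliser of a closed subgroup $K$ is closed: if $g_n\to g$ with $g_nKg_n^{-1}=K$, then for each $k\in K$ we have $gkg^{-1}=\lim g_nkg_n^{-1}\in K$, since $K$ is closed. Hence $G/N$ is a homogeneous space of an amenable Lie group carrying a $G$-invariant probability measure.

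\textbf{Why cofinite forces cocompact.} I expect to cite this rather than prove it, but a sketch would go as follows. Let $R$ be the solvable radical of $G^0$, so that $G^0/R$ is compact semisimple. Mostow's theorem for solvable Lie groups says that a closed subgroup of finite covolume in a solvable Lie group is cocompact. One would apply this to $RN/N$, and handle the compact quotient $G^0/R$ and the countable discrete group $G/G^0$ separately.

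The disconnected part is the delicate point. When $G/G^0$ is discrete, a finite invariant measure on the countable set $G/G^0N$ forces that set to be finite. So passing to the finite-index open subgroup $G^0N$ loses nothing, and the problem reduces to $G^0N$. There one needs a finite-volume quotient of the connected amenable group $G^0$ by $G^0\cap N$, with a possible caveat that the invariant measure of $G^0N/N$ relates correctly to $G^0/(G^0\cap N)$. If citing the literature fails, this reduction to the connected case together with Mostow's theorem is what I would write out in detail. Everything else is formal from the earlier results.
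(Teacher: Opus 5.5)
Your proof is correct and follows the paper's argument step by step. You reduce to a single conjugacy class via Corollary~\ref{cor:sing-conj} and get a cofinite normaliser from Lemma~\ref{lem:supp-confinite}. You then upgrade to cocompact using the Bader--Caprace--Gelander--Mozes theorem that cofinite subgroups of amenable Lie groups are cocompact, which the paper also simply cites, so your Mostow-based sketch is unnecessary. Finally, compactness of the conjugacy class in $\Sub(G)$ together with Lemma~\ref{Usakov} concludes.
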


\begin{proof}
Let $\mu$ be an ergodic compact IRS of an amenable Lie group $G$.
By Corollary \ref{cor:sing-conj}, there is a compact subgroup
$K\le G$, such that $\mu(\{gKg^{-1} ~|~g\in G\})=1$. By Lemma \ref{lem:supp-confinite}, its normalizer $N_G(K)$ is cofinite in $G$.
By a result by Bader--Caprace--Gelander--Mozes, \cite[Corollary 5.25]{PropertyM},
every cofinite subgroup of an amenable Lie group is cocompact. The map
\begin{align*}
G/N_G(K) & \to\mathrm{Sub}(G)\\
gN_G(K) & \mapsto gKg{}^{-1}
\end{align*}
is continuous and $G/N_G(K)$ is compact, so $\left\{ gKg^{-1}\middle|g\in G\right\} $
is compact in $\Sub(G)$, hence closed. It follows from Lemma \ref{Usakov} that there is a compact normal subgroup $N$ containing $\{gKg^{-1}\mid g\in G\}=\bigcup\supp(\mu)$. Since $N$ is closed,  $\ncm \le N$, as needed. 
\end{proof}

\begin{proof}[Proof of Theorem \ref{thm:Lie}]
Let $\mu$ be an ergodic compact IRS of a Lie group $G$.
By Corollary \ref{cor:sing-conj} there exists a compact subgroup $K \le G$ such that $\mu$ is supported on the conjugacy class $[K]_G$ of $K$.
By \cite{bader2016amenable}, every amenable, and in particular every compact, IRS is contained in the amenable radical $R$. But also $R$ is a Lie group and has only countably many conjugacy classes of compact subgroups.
So there exist countably many $g_i \in G$ with
$[K]_G = \bigsqcup_i [g_i K g_i^{-1}]_R$,
and therefore $\mu$ decomposes as a direct sum $\mu = \sum_{i \in \bbN} \alpha_i \mu_i$, where each $\mu_i$ is a compact IRS of $R$ supported on the $R$-conjugacy class $[g_i K g_i^{-1}]_R$ and, in particular, ergodic.
But $R$ is normal and hence $[g_i K g_i^{-1}]_R = g_i [K]_R g_i^{-1}$, which by invariance of $\mu$ means that these $R$-conjugacy classes all have the same, positive, measure.
In particular, there can only be finitely many.
Now $\bigcup \supp(\mu) = \bigcup_i \supp(\mu_i)$, which is a finite union of sets with compact closure by Lemma \ref{lem:amenable-Lie}.
By Lemma \ref{Usakov}, we are done.
\end{proof}

\begin{corollary}\label{Cor:AlmostConnected}
    Let $G$ be an lcsc group such that $G/G^0$ is compact. Then
    every compact IRS of $G$ is contained in the polycompact radical, which is compact.
\end{corollary}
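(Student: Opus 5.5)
We reduce Corollary~\ref{Cor:AlmostConnected} to Theorem~\ref{thm:Lie} using the Gleason--Yamabe theorem. Recall that $\calW(G)=\calE(G)$ is compact for almost connected $G$ \cite[Example 2.3]{Cornulier_commability}. So it suffices to show that every compact IRS $\mu$ satisfies $\ncm\le\calW(G)$. By the ergodic decomposition (first part of Lemma~\ref{lem:non-ergodic}), it is enough to treat an ergodic compact IRS $\mu$ and show that $\ncm$ lies in some compact normal subgroup. Any compact normal subgroup is contained in $\calW(G)$, and $\calW(G)$ is closed here, so this gives the claim.

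Since $G$ is almost connected, Gleason--Yamabe (in its strong form) gives a compact normal subgroup $N\triangleleft G$ such that $G/N$ is a Lie group. As noted before Lemma~\ref{lem:compact normal quotient}, the pushforward $\mu/N$ is a compact IRS of $G/N$. It is ergodic, being a factor of an ergodic measure under the measurable equivariant map $\Sub(G)\to\Sub(G/N)$. Because $N$ is compact, $HN$ is closed for every closed $H$, so this map is simply $H\mapsto HN/N$, and it sends compact subgroups to compact subgroups. Theorem~\ref{thm:Lie} now says that $\mu/N$ is contained in a compact normal subgroup of $G/N$. Lemma~\ref{lem:compact normal quotient} then says that $\mu$ is contained in a compact normal subgroup of $G$, which finishes the ergodic case.

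The main point to check is the form of Gleason--Yamabe being used: we need a compact normal $N$ with $G/N$ Lie for the whole of $G$, not merely for an open subgroup. For almost connected groups this is exactly the classical statement that $G$ is an inverse limit of Lie groups, so every identity neighbourhood contains such an $N$. The remaining ingredients, measurability and ergodicity of $\mu/N$, are routine.

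Finally, the hypothesis in the statement is only that $G/G^0$ is compact, which is precisely almost connectedness, so no further cases arise. The surrounding text also mentions the case where $G/G^0$ is countable, i.e.\ $G^0$ is open. In that case one would apply the above to the almost connected open subgroup $G^0$ and then argue as in the proof of Theorem~\ref{thm:Lie}, using countability of the cosets. That case is not needed for the statement as written.
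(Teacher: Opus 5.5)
Your proposal is correct and follows the paper's proof: Yamabe's theorem gives a compact normal $N$ with $G/N$ a Lie group, and then Theorem~\ref{thm:Lie} and Lemma~\ref{lem:compact normal quotient} finish the argument. Your ergodic decomposition step, and the observation that $\calW(G)$ is compact (hence closed), only make explicit what the paper's two-line proof leaves implicit.

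Your closing aside on the case where $G/G^0$ is countable is not needed for this statement. As sketched it is also incomplete, since a compact IRS of $G$ need not be supported on $\Sub(G^0)$. The paper instead applies Theorem~\ref{thm:Lie} to $\mu/G^0$ in the discrete group $G/G^0$. It then applies this corollary to the almost connected preimage of the resulting finite normal subgroup.
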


\begin{proof}
    By Yamabe's theorem, there exists a compact normal subgroup $N \triangleleft G$ such that $G/N$ is a Lie group. Now the result follows from Theorem \ref{thm:Lie} and Lemma \ref{lem:compact normal quotient}.
\end{proof}

\begin{corollary}
    Let $G$ be an lcsc group such that $G/G^0$ is discrete. Then every ergodic compact IRS of $G$ is contained in a compact normal subgroup.
\end{corollary}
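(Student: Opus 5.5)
Let $\mu$ be an ergodic compact IRS of $G$, where $G/G^0$ is discrete, so $G^0$ is open. The plan is to reduce to the almost connected case through the open subgroups $KG^0$, in the same spirit as the proof of Theorem \ref{thm:Lie gp has countably many subgroups up to conj}.

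The first step uses countability. Since $G$ is second countable and $G^0$ is open, $G/G^0$ is countable. Every compact subgroup $H$ has finite image in $G/G^0$, so $HG^0$ is an open, almost connected subgroup. There are only countably many subgroups of the form $FG^0$ with $F/G^0$ finite, since they correspond to finite subgroups of the countable group $G/G^0$. Conjugation permutes these subgroups, and the map $H\mapsto HG^0$ is measurable because $G^0$ is open. Therefore the pushforward of $\mu$ is an ergodic invariant measure on a countable set, hence supported on a single conjugacy orbit $\{gLg^{-1}\}$ of some open almost connected subgroup $L=FG^0$. Such an orbit is countable, and every point has the same positive mass. So the orbit is finite, and $N_G(L)$ has finite index in $G$.

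The second step passes to the finite-index subgroup. Let $G_1=\bigcap_{g} N_G(gLg^{-1})$. It is a finite intersection, hence a normal subgroup of finite index. Each $L_i=g_iLg_i^{-1}$ is normal in $G_1$, and $L_i\cap G_1$ is almost connected because it has finite index in $L_i$. Restrict $\mu$ to the set $\{H : HG^0=L_i\}$ and normalise to obtain $\mu_i$. This is an IRS of $N_G(L_i)\supseteq G_1$, and hence a compact IRS of the almost connected group $L_i$ under $L_i$-conjugation. By Corollary \ref{Cor:AlmostConnected}, $\mu_i$ is contained in $\calW(L_i)$, which is compact. Since $\calW(L_i)$ is characteristic in $L_i$, and $L_i$ is normal in $N_G(L_i)$, the group $C_i:=\calW(L_i)$ is normalised by $G_1$.

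The third step builds a normal subgroup of $G$. The finitely many compact groups $C_i$ are permuted by conjugation in $G$, and each is normalised by $G_1$. The group they generate, $C=\langle C_1,\dots,C_k\rangle$, is normal in $G$ and contains $\bigcup\supp\mu$. The main obstacle is showing that $C$ is compact, since a group generated by finitely many compact subgroups need not be compact.

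Here is how I would handle it. Each $C_i$ lies in $L_i\le L_iG_1$, and $C_i$ is normalised by $G_1$. Intersect with the open subgroup $G_1^\circ:=G_1\cap\bigcap_i L_i$, which is almost connected, open, and normal in $G$ after a further finite intersection. Then $C_i\cap G_1^\circ$ is a compact subgroup that is normal in $G_1^\circ$, so it lies in the compact group $\calW(G_1^\circ)$. It follows that $C$ is generated by the compact normal subgroup $\calW(G_1^\circ)$, which is characteristic and hence normal in $G$, together with finitely many finite images. I would then conclude with Lemma \ref{lem:compact normal quotient}: quotient by $\calW(G_1^\circ)$ and argue in the resulting group. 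In that quotient the image of $G_1^\circ$ has trivial polycompact radical. Each $C_i$ becomes finite, and it normalises the image of $G_1^\circ$ while meeting it trivially. So $C_i$ centralises it, or at least acts on it with a closed orbit, and Lemma \ref{Usakov} then applies to the finite union of conjugacy classes, just as in the proof of Theorem \ref{thm:Lie}.
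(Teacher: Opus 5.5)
Your argument is correct, but Step 3 is much longer than it needs to be. The ``main obstacle'' disappears once you note that $C_i=\calW(g_iLg_i^{-1})=g_iC_1g_i^{-1}$, so $\{C_1,\dots,C_k\}$ is the entire $G$-conjugacy class of the compact subgroup $C_1$. Being finite, this class is closed in $\Sub(G)$. Lemma \ref{Usakov}, applied in $G$ itself, then gives a compact normal subgroup $M\triangleleft G$ containing every $C_i$, so $\mu(\Sub(M))=1$ and $\ncm\le M$.

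The passage to $G/\calW(G_1^\circ)$ and the centralising discussion can be deleted. The inference ``normalises and meets trivially, hence centralises'' also needs that $\bar G_1^\circ$ normalises $\bar C_i$. That does hold, since $G_1^\circ\le G_1$, but it is beside the point. What makes Lemma \ref{Usakov} applicable in the quotient is again only that the conjugacy class is finite.

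With that simplification, your route differs from the paper's mainly in the order of the steps. The paper applies Theorem \ref{thm:Lie} to the countable discrete group $G/G^0$. This glues the finitely many conjugates of $L/G^0$ into a finite normal subgroup $F$ already in the quotient. The preimage $N$ of $F$ is then an open, normal, almost connected subgroup with $\mu(\Sub(N))=1$. A single application of Corollary \ref{Cor:AlmostConnected} gives $\ncm\le\calW(N)$, which is compact and normal in $G$ because it is characteristic in $N$.

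You instead apply Corollary \ref{Cor:AlmostConnected} separately to each non-normal $L_i$. This forces you to introduce the restricted measures $\mu_i$ and the normaliser $G_1$, and then to glue the compact pieces in $G$ via Lemma \ref{Usakov}. Both routes use the same ingredients: ergodicity together with countability of the finite subgroups of $G/G^0$, Lemma \ref{Usakov}, and Corollary \ref{Cor:AlmostConnected}. The paper's ordering avoids gluing in the non-discrete group and is shorter. Yours works equally well once Step 3 is replaced by the direct application of Lemma \ref{Usakov}.

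One small correction: $L_i$ is normalised by $G_1$, not normal in it, since $L_i$ need not lie in $G_1$.
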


\begin{proof}
    Let $\mu$ be an ergodic compact IRS of $G$.
    By Theorem \ref{thm:Lie}, the quotient IRS $\mu/G^0$ is contained in a finite, normal subgroup $F \triangleleft G/G^0$. Then $\mu$ is an IRS of the pre-image $N$ of $F$. Since $G^0 \le N$ is cocompact, the group $N$ is almost connected and $\mu$ is an IRS of $N$. By Corollary \ref{Cor:AlmostConnected}, the compact IRS $\mu$ is contained in the compact group $\calW(N)$. 
\end{proof}

\section{Totally disconnected groups} \label{sect:tdlc}

For
brevity, we call a topological group \emph{tdlc} if it is totally disconnected and locally compact, and \emph{tdlcsc} if it is moreover second-countable.

\begin{definition}[Willis]
    Let $G$ be a tdlc group and $g \in G$. 
    The \emph{parabolic subgroup} of $g$ is\footnote{Equality of these sets is proved in \cite[Prop.\ 3]{Willis1994}.}
    \begin{align*}
        \mathrm{par}(g) &:= \{h \in G \mid \{ g^n h g^{-n}  \mid n \geq 0\} \text{ has an accumulation point}\} \\
        &= \{h \in G \mid \overline{\{ g^n h g^{-n}  \mid n \geq 0\}} \text{ is compact}\}.
    \end{align*}
    The \emph{Levi subgroup} of $g$ is
    \[
    \mathrm{lev}(g) := \mathrm{par}(g) \cap \mathrm{par}(g^{-1})
       = \{h \in G \mid \overline{\{g^n h g^{-n} \mid n \in \bbZ\}} \text{ is compact}\}.
    \]
\end{definition}

\noindent
We define $\lev(G) := \bigcap_{g \in G} \lev(g)$; this subgroup was introduced in \cite{Willis2023} and called the \emph{approximate center}, denoted $AZ(G)$.
Both the parabolic and the Levi subgroup of an element are closed (see \cite[Prop. 3]{Willis1994}). It follows that $\lev(G)$ is a closed, normal subgroup.
We always have
\begin{equation}
\calW(G)\le \lev(G).
\end{equation}
To see this, write $I_g(x):=gxg^{-1}$
for $g,x\in G$.
For each compact normal subgroup~$K$ of~$G$,
given $x\in K$ and $g\in G$
we have $\{I_g^n(x)\colon n\in \bbZ\} \subseteq K$, whence $x\in \lev(g)$
and thus $x\in \lev(G)$. Hence $K\subseteq \lev(G)$,
from which the assertion follows.

\begin{theorem}\label{thm:levi}
    Let $\mu$ be a compact IRS of a tdlcsc group $G$. Then $\ncm \le \lev(G)$.
\end{theorem}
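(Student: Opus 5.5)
We pass from $\mu$ to the conjugation-invariant probability measure $\nu$ on $G$ from Proposition \ref{AveragingMu}. Since $\bigcup\supp\mu\subseteq\supp\nu$ and $\lev(G)$ is a closed subgroup, it suffices to show $\supp\nu\subseteq\lev(g)$ for every fixed $g\in G$. Moreover, $\lev(g)$ is closed, so it is enough to prove $\nu(G\setminus\lev(g))=0$: then $\nu$ is concentrated on the closed set $\lev(g)$, and hence $\supp\nu\subseteq\lev(g)$. Taking the closed subgroup generated then gives $\ncm\le\lev(g)$ for every $g$, and so $\ncm\le\lev(G)$.

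Fix $g$ and write $I_g(x)=gxg^{-1}$. By conjugation invariance, $I_g$ preserves the probability measure $\nu$, so Poincaré recurrence applies to $(G,\nu,I_g)$. Since $G$ is tdlcsc, it has a countable basis of compact open sets $U$. Poincaré recurrence, applied to each such $U$, shows that for $\nu$-almost every $x$ and every basic compact open $U\ni x$, the forward orbit $I_g^n(x)$, $n\ge 0$, returns to $U$ infinitely often. In particular, for $\nu$-a.e. $x$ the forward orbit $\{g^nxg^{-n}\}_{n\ge0}$ has an accumulation point: take any compact open $U\ni x$; infinitely many $n$ satisfy $g^nxg^{-n}\in U$, and $U$ is compact. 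Hence $x\in\mathrm{par}(g)$ by Willis' characterisation. The same argument applied to $I_{g^{-1}}$, which also preserves $\nu$, gives $x\in\mathrm{par}(g^{-1})$ for $\nu$-a.e. $x$. Therefore $\nu(\mathrm{par}(g)\cap\mathrm{par}(g^{-1}))=1$, i.e. $\nu(\lev(g))=1$.

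The main point requiring care is the correct use of the first characterisation of $\mathrm{par}(g)$, namely that having an accumulation point, rather than a compact orbit closure, suffices. This equality of the two descriptions is Willis' result \cite[Prop.\ 3]{Willis1994}, which we may quote. A subsidiary point is measurability of the set of recurrent points. This is handled by taking a countable basis of compact open sets and intersecting the full-measure recurrence sets. Note that we never use that $\nu$ lives on elliptic elements beyond its being a finite invariant measure; compactness of $\mu$ enters only through the existence of $\nu$.
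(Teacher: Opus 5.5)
Your proof is correct and follows essentially the same route as the paper. Both pass to the conjugation-invariant measure $\nu$ of Proposition \ref{AveragingMu}, apply Poincar\'e recurrence to $I_g$ to get $\nu(\mathrm{par}(g))=1$, and use closedness of $\mathrm{par}(g)$ (hence of $\lev(g)$) to conclude $\bigcup\supp\mu\subseteq\supp\nu\subseteq\lev(G)$; your explicit treatment of $g^{-1}$ and of the countable basis for recurrence only spells out details the paper leaves implicit.
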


\begin{proof}
    We consider the probability measure $\nu$ defined in Proposition \ref{AveragingMu}. It is a well-defined, conjugation invariant probability measure on $G$, and $\bigcup\supp(\mu)\subseteq\supp(\nu)$. Since $\nu$ is conjugation invariant, for every $g \in G$ we have by Poincar\'e recurrence
    \[
    \nu( \{ h\in G \mid g^n h g^{-n}\text{ accumulates at }h\}) = 1.
    \]
    In particular $\nu(\mathrm{par}(g))=1$. Since $\mathrm{par}(g)$ is closed, this means that $\bigcup\supp(\mu)\subseteq\supp(\nu)\subseteq\mathrm{par}(g)$. 
    Since $g$ was arbitrary, we have $\bigcup\supp(\mu)\subseteq\lev(G)$. Since $\lev(G)$ is a closed subgroup, we get that the normal closure of $\mu$ is contained in $\lev(G)$.
\end{proof}
\subsection{\texorpdfstring{$p$}{p}-adic Lie Groups}
We now turn to $p$-adic Lie groups and observe:\footnote{We mention that second countability is inessential for Remark~\ref{closedpadic} through Lemma~\ref{glo2}.}

\begin{remark}\label{closedpadic}
$\calW(G)$ is closed for each $p$-adic Lie group~$G$.
\end{remark}

In fact, $\calW(G)=\bigcup\calK$
for the set $\calK$ of compact normal subgroups
of~$G$ and the following lemma applies,
which generalizes the fact
that an ascending union $\bigcup_{n\in \bbN}H_n$
of closed subgroups $H_1\subseteq H_2\subseteq\cdots$
of a $p$-adic Lie group is always closed
(see Lemma~6.5 in \cite{MAT}; cf.\ step~1 of the proof of Theorem 3.5 in \cite{Wan}).

\begin{lemma}
Let $G$ be a $p$-adic Lie group
and $\calH$ be a set of closed subgroups of~$G$
which is directed under $\subseteq$.
Then $H:=\bigcup\calH$ is closed in~$G$.
\end{lemma}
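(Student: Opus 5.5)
Since $\calH$ is directed, $H$ is a subgroup, so only closedness needs proof. The plan is to use the Lie algebra functor, which is available in $p$-adic Lie groups, together with the fact that $\Lie(G)$ is finite-dimensional.

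First, reduce to a single Lie algebra. For each closed subgroup $K\in\calH$, the closed subgroup $K$ is a $p$-adic Lie subgroup with Lie algebra $\Lie(K)\subseteq\Lie(G)=\frakg$. Inclusion $K\subseteq K'$ implies $\Lie(K)\subseteq\Lie(K')$, so $\{\Lie(K)\colon K\in\calH\}$ is a directed family of subspaces of the finite-dimensional $\mathbb{Q}_p$-vector space $\frakg$. Hence there is a member $K_0\in\calH$ such that $\Lie(K)=\Lie(K_0)=:\frakh$ for all $K\in\calH$ with $K\supseteq K_0$. Replacing $\calH$ by the cofinal subfamily $\{K\in\calH\colon K\supseteq K_0\}$ does not change $H$, so we may assume all members share the Lie algebra $\frakh$.

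Next, find a common open piece. Fix a compact open subgroup $U$ of $G$ on which $\exp$ is a homeomorphism from a $\bbZ_p$-lattice neighbourhood $B\subseteq\frakg$ (a standard uniform/``small'' subgroup). For a closed subgroup $K$, the intersection $K\cap U$ is open in $K$, and, after shrinking $U$ suitably, $K\cap U$ contains $\exp(B\cap\frakh)=:V$, which is an open subgroup of each such $K$. The hard step is to show that for a possibly smaller $U'$ one actually has $K\cap U'=V\cap U'$ \emph{uniformly} for all $K\in\calH$. My first attempt would be the following. The set $H\cap U$ is an ascending directed union of closed subgroups $K\cap U$ of the compact $p$-adic Lie group $U$, each having Lie algebra $\frakh$, so each $V$ has finite index in $K\cap U$. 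I would then want the normaliser trick: each $K\cap U$ normalises... which is not automatic. Instead, work in the compact group $\overline{H\cap U}$. This group is a compact $p$-adic Lie group whose Lie algebra $\frakh'$ contains $\frakh$. I would show $\frakh'=\frakh$ by noting that a countable (after using second countability of $\frakg$ or a Baire argument) directed union of the closed subgroups $K\cap U$ covers a dense subgroup. Then I would apply Baire category inside the closure after restricting to countable cofinal chains. If Baire category fails in the non-countable case, I would instead use directedness to pick a countable cofinal chain in the relevant finite-dimensional data and invoke the cited ascending-union result (Lemma~6.5 of \cite{MAT}).

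Finally, conclude closedness. Once we know that $H$ contains the open subgroup $V$ of the Lie group generated and that $H$ is locally equal to $V$, meaning $H\cap W=V\cap W$ for some identity neighbourhood $W$, the subgroup $H$ is locally closed. A locally closed subgroup of a topological group is closed. So the main obstacle is the middle step: uniform control of $K\cap W$ across the directed family. For this I expect to use that finite subgroups near the identity vanish in a torsion-free uniform $U$, so that $K\cap W$, being a finite extension of $V\cap W$ inside a torsion-free uniform group with the same Lie algebra, must equal it: $x^{p^m}\in V$ forces $x\in V$ by uniqueness of $p$-power roots in uniform groups.
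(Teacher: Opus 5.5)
Your final argument is correct, and it takes a genuinely different route from the paper. The paper does not use Lie algebras here. It takes $x\in\overline{H}$ and, by metrizability, a sequence $x_n\in H$ with $x_n\to x$. Directedness lets it place the $x_n$ in an ascending chain $H_1\subseteq H_2\subseteq\cdots$ from $\calH$. It then cites the countable case (Lemma~6.5 of \cite{MAT}) to get $x\in\bigcup_n H_n\subseteq H$.

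You instead prove the statement directly:
\begin{enumerate}
\item The Lie algebras $\Lie(K)$, $K\in\calH$, stabilize at some $\mathfrak{h}=\Lie(K_0)$ by a dimension count.
\item Choose a small standard subgroup $U=\exp(B)$ with $\exp(B\cap\mathfrak{h})\subseteq K_0$. Then $K\cap U=\exp(B\cap\mathfrak{h})$ for every $K\supseteq K_0$.
\item Hence $H\cap U$ is compact, so $H$ is locally closed and therefore closed.
\end{enumerate}
This is self-contained: it reproves the cited countable lemma rather than invoking it, and it handles arbitrary directed families without passing to sequences. It also gives more information: $K_0$ is open in $H$ and $\Lie(H)=\mathfrak{h}$. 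The paper's version is three lines, but it outsources exactly the Lie-theoretic content you supply.

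When you write it up, delete the middle detours (the normaliser idea, the Baire argument in $\overline{H\cap U}$, and the fallback). None of them is needed. The fallback is also unsound as stated, because a directed family need not contain a countable cofinal chain. The legitimate way to obtain countability is the paper's: only the countably many members containing a sequence that converges to a given point of $\overline{H}$ matter.

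Make two points explicit.
\begin{enumerate}
\item The shrinking of $U$ must be done once, relative to $K_0$. This is possible because $\exp_{K_0}$ is the restriction of $\exp_G$ to a small ball in $\mathfrak{h}$. Then $\exp(B\cap\mathfrak{h})\subseteq K_0\subseteq K$ holds uniformly in $K$. Shrinking separately for each $K$ would not give this; for instance $p\bbZ_p\le{\mathbb Q}_p$ has full Lie algebra but does not contain $\exp(\bbZ_p)$.
\item Uniqueness of $p$-power roots alone does not let you deduce $x\in V$ from $x^{p^m}\in V$. You also need a $p^m$-th root inside $V$. It exists because $\log x=p^{-m}\log(x^{p^m})\in B$ lies in $\mathfrak{h}$, since $\mathfrak{h}$ is a ${\mathbb Q}_p$-subspace. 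More simply, for $x\in K\cap U$ the map $z\mapsto\exp(z\log x)$ is a continuous homomorphism $\bbZ_p\to\overline{\langle x\rangle}\subseteq K$. Hence $\log x\in\Lie(K)=\mathfrak{h}$ and $x\in\exp(B\cap\mathfrak{h})$, with no finite-index or root argument needed.
\end{enumerate}
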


\begin{proof}
If $x$ is in the closure of~$H$,
there exists a sequence $(x_n)_{n\in \bbN}$
in $H$ which converges to~$x$. Then $x_n\in H_n$ for some $H_n\in \calH$; as $\calH$ is directed, we may assume that $H_1\subseteq H_2\subseteq\cdots$.
Since $C:=\bigcup_{n\in\bbN}H_n$
is closed, $x\in C\subseteq H$ follows.
\end{proof}

A locally compact group~$G$
is called \emph{pro-discrete} if each
identity neighbourhood of~$G$ contains
an open, normal subgroup of~$G$.

\begin{theorem}\label{thm:Gloeckner W(G) open in lev(G)}
If $G$ is a compactly generated $p$-adic
Lie group, then each identity neighbourhood
of $\lev(G)$
contains a compact open subgroup of~$\lev(G)$
which is normal in~$G$.
In particular, $\lev(G)$ is pro-discrete
and $\calW(G)$
is an open subgroup of~$\lev(G)$.
\end{theorem}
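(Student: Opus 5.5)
We want to show that for compactly generated $p$-adic Lie group $G$, every identity neighbourhood of $L:=\lev(G)$ contains a compact open subgroup of $L$ that is normal in $G$. The plan is to exploit the Lie algebra and the adjoint representation. Fix a compact generating set, which we may take of the form $S=\{g_1,\dots,g_k\}\cdot U$ for a compact open subgroup $U$ of $G$. Since $L=\bigcap_g\lev(g)$ is a closed subgroup of a $p$-adic Lie group, it is itself a $p$-adic Lie group. Its Lie algebra $\frakl:=\Lie(L)\subseteq\frakg$ is an $\Ad(G)$-invariant $\mathbb{Q}_p$-subspace, because $L$ is normal.

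\textbf{Step 1: linear algebra on $\frakl$.} For each $g\in G$ and $h\in L\le\lev(g)$, the orbit $\{g^nhg^{-n}\}_{n\in\bbZ}$ is relatively compact. Passing to the Lie algebra via the exponential map on a small neighbourhood, we get that $\Ad(g)|_{\frakl}$ has all orbits bounded in both directions. Hence all its eigenvalues have absolute value $1$, and indeed $\Ad(g)|_\frakl$ is contained in a compact subgroup of $\GL(\frakl)$. The technical care here is that orbits of points of $L$ may leave the exponential chart; we would use Willis' theory, e.g.\ that $\lev(g)$ has a small tidy subgroup invariant under $g$, to get a $g$-invariant compact open subgroup of $\lev(g)$ and thus an $\Ad(g)$-invariant lattice.

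Next, consider the image $\Ad(G)|_\frakl\subseteq\GL(\frakl)$. We want it to be relatively compact, i.e.\ to stabilise a $\bbZ_p$-lattice $\Lambda\subseteq\frakl$ that can be chosen inside any given neighbourhood of $0$. The image of $U$ is compact. Each $g_i$ acts with bounded cyclic group, but we need a common bound. For this we would argue that the image group $\Gamma=\Ad(G)|_\frakl$ is compactly generated and every element of it is "distal/elliptic". A linear group over $\mathbb{Q}_p$ in which every element lies in a compact subgroup is itself relatively compact: apply this to the Zariski closure, whose unipotent and split-torus parts must be trivial by the eigenvalue condition, together with compact generation. This is the main obstacle. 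I would first try a Burnside/Tits-type argument: all elements of $\Gamma$ have eigenvalues of absolute value $1$, so traces are bounded, and hence $\Gamma$ is bounded modulo the unipotent radical of its Zariski closure. Compact generation plus boundedness of each generator's cyclic group should then kill the unipotent part, using Step 1's stronger statement that each element generates a relatively compact group.

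\textbf{Step 2: integrating back.} Given a $\Gamma$-invariant lattice $\Lambda$, we can scale it by $p^m$ and still have it invariant, so it can be made arbitrarily small. Choose $m$ so large that $V:=\exp(p^m\Lambda)$ is a compact open subgroup of $L$, e.g.\ lying in a uniform pro-$p$ region where $\exp$ is a bijection onto a subgroup satisfying the BCH formula. Then $gVg^{-1}=\exp(\Ad(g)p^m\Lambda)=V$, so $V$ is normal in $G$, and it lies in the prescribed neighbourhood. This proves the first assertion.

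The consequences follow. $L$ is pro-discrete by definition, since these $V$ are open and normal in $L$. Each such $V$ is a compact normal subgroup of $G$, so $V\le\calW(G)$, and hence $\calW(G)$ is open in $L$; here we also use the inclusion $\calW(G)\le\lev(G)$ established earlier.
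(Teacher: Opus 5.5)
Your outline follows the paper's proof: ellipticity of each $\Ad_g|_{\frakl}$, then relative compactness of the compactly generated group $\Gamma=\Ad(G)|_{\frakl}$, an invariant $\bbZ_p$-lattice, and exponentiation. For the first step the paper uses Wang's identity $\Lie(\lev(I_g))=\lev(\Ad_g)$ together with Lemma~\ref{glo1}; your route through a $g$-stable compact open subgroup of $\lev(g)$ is a workable substitute. The genuine gap is the middle step. You call it the main obstacle but do not establish it, and your sketch of it is wrong. The claim that the unipotent part of the Zariski closure is trivial ``by the eigenvalue condition'' fails, because over $\mathbb{Q}_p$ unipotent elements are elliptic. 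For example, $u=\begin{pmatrix}1&1\\0&1\end{pmatrix}$ has $\overline{\langle u\rangle}=\begin{pmatrix}1&\bbZ_p\\0&1\end{pmatrix}$, a compact group whose Zariski closure is the full unipotent group. Nor can ellipticity of the generators ``kill'' the unipotent part. The group $\{\begin{pmatrix}1&t\\0&1\end{pmatrix}\colon t\in\mathbb{Q}_p\}$ consists of elliptic elements and is not relatively compact. So the only lever is compact generation, and you never say how it is used.

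The statement you need is true. It is Parreau's theorem (a compactly generated subgroup of $\GL_n$ over a local field all of whose elements have eigenvalues of absolute value $1$ is relatively compact), which the paper simply cites. To prove it yourself, proceed in two steps. First, Burnside's trace argument shows that $\Gamma$ acts boundedly on each irreducible subquotient $V_i$ of a $\Gamma$-composition series of $\frakl$, so it preserves a lattice $\Lambda_i\subseteq V_i$. This uses that in characteristic $0$ the trace form on the span of the image is nondegenerate, and that $|\mathrm{tr}(\gamma)|\le 1$ for all $\gamma\in\Gamma$ by the ultrametric inequality. Second, choose an adapted splitting $\frakl\cong\bigoplus_{i=1}^r V_i$, in which $\Gamma$ is block upper triangular. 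Conjugation by $D=\mathrm{diag}(p^{m(r-1)}I,\dots,p^{m}I,I)$ fixes the diagonal blocks and multiplies the $(i,j)$-block by $p^{m(j-i)}$. So for large $m$ the compact set $D\,\Ad(S)|_{\frakl}\,D^{-1}$ lies in the compact open group $\{\gamma\colon\gamma(\Lambda)=\Lambda\}$, where $\Lambda=\bigoplus_i\Lambda_i$, and hence so does $D\Gamma D^{-1}$.

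A smaller point concerns Step 2. The identity $g\exp(x)g^{-1}=\exp(\Ad_g x)$ on a fixed small domain is not automatic for every $g\in G$. A homomorphism $\bbZ_p\to G$ is determined by its derivative only while it stays in the exponential chart. For instance, on $\bbZ_p\times\bbZ/p^k$ the automorphism $(a,b)\mapsto(a,b+(a\bmod p^k))$ induces the identity on the Lie algebra, yet moves $\exp(p^{k-1})=(p^{k-1},0)$. The paper verifies the identity only for $g$ in a compact generating set, where compactness gives a uniform domain. It then gets $gVg^{-1}=V$ for all $g$ because that set generates $G$; you should argue the same way with your $S$.
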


The following two lemmas
will help us to prove the theorem.

\begin{lemma}\label{glo1}
$\alpha\in \GL_n({\mathbb Q}_p)$
is an elliptic element if and only if
$|\lambda|=1$ for each
eigenvalue~$\lambda$
of~$\alpha$ in an algebraic closure
$\overline{{\mathbb Q}}_p$
of ${\mathbb Q}_p$,
where $|\cdot|\colon \overline{{\mathbb Q}}_p\to\bbR$
is the unique extension of the $p$-adic
absolute value on ${\mathbb Q}_p$
to an absolute value on $\overline{{\mathbb Q}}_p$.
\end{lemma}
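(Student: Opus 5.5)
The plan is to pass to a Jordan-type normal form over a finite extension and read ellipticity off the growth of powers.

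Since $\alpha$ has finitely many eigenvalues, all of them lie in a finite extension $L$ of ${\mathbb Q}_p$ inside $\overline{{\mathbb Q}}_p$. On $L$ the unique extension of $|\cdot|$ is an absolute value, and $L^n$ with any norm is a finite-dimensional normed space over a complete field. A subset of $\GL_n({\mathbb Q}_p)$ has compact closure in $\GL_n({\mathbb Q}_p)$ exactly when it is bounded in $M_n({\mathbb Q}_p)$ and its inverses are bounded too. Also, $\alpha$ is elliptic if and only if $\{\alpha^k : k\in\bbZ\}$ is relatively compact. Relative compactness of this set is equivalent to boundedness of $\{\alpha^k : k\in\bbZ\}$ in $M_n({\mathbb Q}_p)$, because the inverses $\alpha^{-k}$ belong to the same set. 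This boundedness can be checked after extending scalars to $L$, since $M_n({\mathbb Q}_p)\subseteq M_n(L)$ carries the restricted norm.

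\emph{Necessity.} Suppose $\alpha$ is elliptic and $\lambda\in L$ is an eigenvalue with eigenvector $v\in L^n\setminus\{0\}$. Then $\alpha^k v=\lambda^k v$, so $\|\alpha^k\|\ge|\lambda|^k$ for all $k\in\bbZ$ (operator norm). If $|\lambda|\neq1$, then $|\lambda|^k$ is unbounded as $k\to+\infty$ or as $k\to-\infty$, contradicting boundedness. Hence $|\lambda|=1$.

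\emph{Sufficiency.} Suppose all eigenvalues satisfy $|\lambda|=1$. Write the Jordan decomposition $\alpha=su=us$ over $L$, with $s$ semisimple and $u$ unipotent. In a suitable basis $s$ is diagonal with unit entries, so $\{s^k\}$ is bounded. However $u^k$ grows polynomially in $k$ when $u\neq 1$, so over ${\mathbb Q}_p$ the polynomial growth does not die. For instance, $\alpha=\begin{pmatrix}1&1\\0&1\end{pmatrix}$ has $\alpha^k$ with entry $k$, and $|k|_p\le1$. So the unipotent part does not obstruct boundedness; this is a point to be careful about.

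The cleaner route avoids estimating Jordan blocks directly. The characteristic polynomial $\chi$ of $\alpha$ has all roots of absolute value $1$. Since the roots are algebraic integers up to units, $\chi$ lies in $\bbZ_p[x]$ and its constant term is a unit. By Cayley--Hamilton, the $\bbZ_p$-module $\bbZ_p[\alpha]$ is spanned by $1,\alpha,\dots,\alpha^{n-1}$, so it is a finitely generated, hence bounded, $\bbZ_p$-module containing all $\alpha^k$ with $k\ge0$. Because the constant term $\chi(0)$ is a unit, $\alpha^{-1}=-\chi(0)^{-1}(\alpha^{n-1}+\dots)\in\bbZ_p[\alpha]$, so the negative powers lie in the same module. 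Therefore $\{\alpha^k : k\in\bbZ\}$ is bounded, and $\alpha$ is elliptic.

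The main step to justify is the claim that $|\lambda|=1$ for all roots of a monic polynomial $\chi\in{\mathbb Q}_p[x]$ forces $\chi\in\bbZ_p[x]$. This follows because the coefficients of $\chi$ are elementary symmetric functions of the roots, so the ultrametric inequality bounds each coefficient's absolute value by $1$; a coefficient in ${\mathbb Q}_p$ with $|\cdot|\le1$ lies in $\bbZ_p$. The constant term is $\pm\prod\lambda$, which has absolute value $1$ and is therefore a unit.
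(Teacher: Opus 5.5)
Your proof is correct. The necessity direction is essentially the paper's argument: an eigenvector $v$ over a splitting field gives the orbit $\lambda^k v$, which is unbounded if $|\lambda|\neq 1$ and so contradicts relative compactness of $\alpha^{\bbZ}$. You phrase this via operator norms, the paper via continuity of the action $\GL_n({\mathbb Q}_p)\times\bbK^n\to\bbK^n$.

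For sufficiency the paper gives no argument of its own and simply invokes Lemma~3.9 of \cite{MAT}. You instead give a self-contained proof:
\begin{itemize}
\item The ultrametric inequality applied to the elementary symmetric functions puts $\chi$ in $\bbZ_p[x]$ with $\chi(0)\in\bbZ_p^\times$.
\item Cayley--Hamilton then makes $\bbZ_p[\alpha]=\sum_{i<n}\bbZ_p\alpha^i$ a compact subring of $M_n({\mathbb Q}_p)$ containing $\alpha$ and $\alpha^{-1}$, hence all of $\alpha^{\bbZ}$.
\item The closed embedding $g\mapsto(g,g^{-1})$ of $\GL_n({\mathbb Q}_p)$ into $M_n({\mathbb Q}_p)\times M_n({\mathbb Q}_p)$ converts this into relative compactness in $\GL_n({\mathbb Q}_p)$.
\end{itemize}
Your route buys independence from the cited reference and visibly works over any non-archimedean local field, with $\bbZ_p$ replaced by the valuation ring. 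The paper's citation buys only brevity.

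Some points of presentation need fixing.
\begin{itemize}
\item The Jordan-decomposition paragraph is muddled: ``the polynomial growth does not die'' contradicts the sentence that follows it. Delete it or fix it. That route also works, since $u^k=\sum_{j<n}\binom{k}{j}(u-1)^j$ for all $k\in\bbZ$ with $\binom{k}{j}\in\bbZ$, so $|\binom{k}{j}|\le 1$.
\item Replace the phrase ``algebraic integers up to units'' by the symmetric-function argument you give at the end.
\item State explicitly that the negative powers $\alpha^{-k}=(\alpha^{-1})^k$ lie in $\bbZ_p[\alpha]$ because it is closed under multiplication.
\end{itemize}
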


\begin{proof}
If $|\lambda|=1$ for all $\lambda$,
then $\alpha$ is elliptic by Lemma~3.9 in \cite{MAT}.
Conversely, let
$\alpha$ be elliptic and $\bbK\subseteq \overline{{\mathbb Q}}_p$
be a splitting field for the characteristic polynomial
of~$\alpha$. If $|\lambda|>1$ for some
eigenvalue of~$\alpha$ in~$\bbK$ and $v\in \bbK^n$ a corresponding eigenvector, then the orbit
\[
\alpha^{\bbN_0}(v):=\{\alpha^m(v)\colon m\in \bbN_0\}=\{\lambda^mv\colon m\in \bbN_0\}
\]
is unbounded. But $\alpha^{\bbN_0}(v)$
is relatively compact and hence bounded
as $\alpha^{\bbN_0}$ is contained in
a compact subset of $\GL_n({\mathbb Q}_p)$
and the action $\GL_n({\mathbb Q}_p)\times \bbK^n\to\bbK^n$,
$(\beta,x)\mapsto \beta(x)$ is continuous; contradiction.
Using negative powers of $\alpha$ instead,
we see that also
$|\lambda|<1$
cannot occur.
\end{proof}

\noindent
Given a $p$-adic Lie group~$G$,
we let $\Lie(G)$ be its Lie algebra.
If $\psi\colon G\to H$
is an analytic group homomorphism
between $p$-adic Lie groups, then $\Lie(\psi)\colon \Lie(G)\to \Lie(H)$
denotes the associated Lie algebra homomorphism.
If $M$ is an analytic $p$-adic manifold,
we let $TM$ be its tangent bundle.
If $f\colon M\to N$
is an analytic map between $p$-adic manifolds,
we write $Tf\colon TM\to TN$ for its tangent map.
Moreover,
$\Aut(G)$ denotes the group of
automorphisms of a $p$-adic Lie group~$G$.

\begin{lemma}\label{glo2}
Let $H$ and $N$ be $p$-adic Lie groups, $\frakn:=\Lie(N)$ and
\[
\alpha\colon H\to\Aut(N),\quad h\mapsto\alpha_h
\]
be a homomorphism of groups such that the associated action
$\widehat{\alpha}\colon H\times N\to N$, $(h,x)\mapsto\alpha_h(x)$
is analytic. Assume that $H$ is compactly generated
and that $\Lie(\alpha_h)\colon \frakn\to \frakn$
is an elliptic element of $\GL(\frakn)$
for each $h\in H$. Then each identity neighbourhood
$U\subseteq N$ contains a compact open subgroup
$V$ of~$N$ such that
\[
\alpha_h(V)=V\quad\mbox{for all $\, h\in H$.}
\]
If the inner automorphism
$I^N_x\colon N\to N$, $y\mapsto xyx^{-1}$
is in $\alpha(H)$ for all $x\in N$,
then each $V$ as before is a normal subgroup of~$N$
and $N$ is pro-discrete.
\end{lemma}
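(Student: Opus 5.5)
The plan is to reduce to linear algebra on $\frakn$ via the adjoint-type representation $\rho\colon H\to \GL(\frakn)$, $h\mapsto \Lie(\alpha_h)$. This is a continuous homomorphism, since $\widehat{\alpha}$ is analytic. By hypothesis every element of $\rho(H)$ is elliptic. Since $H$ is compactly generated, so is $\rho(H)$, and the closure $\overline{\rho(H)}$ is compactly generated as well (it is topologically generated by the image of a compact generating set).

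The \textbf{key step} is to show that a compactly generated subgroup $\Gamma\le\GL(\frakn)$ all of whose elements are elliptic (in the sense of Lemma~\ref{glo1}) is relatively compact. I would first try the following approach.
\begin{enumerate}[label=(\alph*)]
\item Pass to the Zariski closure and use Lemma~\ref{glo1}. Every element has all eigenvalues of absolute value $1$, so the characters of the Zariski closure take unimodular values on $\Gamma$.
\item After choosing a flag of invariant subspaces (a composition series of the $\Gamma$-module $\frakn\otimes\bbK$), the diagonal blocks give a group of elliptic matrices acting irreducibly. By a Burnside/Tits-type argument, such a group is bounded: irreducible together with bounded traces (all eigenvalues of modulus $1$) implies that the matrix coefficients lie in a bounded set.
\item The unipotent off-diagonal part could still be unbounded. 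Here I would use compact generation. An ascending-union argument in the spirit of Lemma~6.5 of \cite{MAT} shows that a compactly generated group of elliptic elements, being a union of an increasing chain of compact open subgroups stabilising lattices, stabilises a single $\mathbb{Z}_p$-lattice. Alternatively, commutators of generators lie in the unipotent radical, and unipotent elliptic elements in characteristic $0$ are trivial, since $u^{p^k}\to1$ forces $\log u=0$.
\end{enumerate}
The outcome I am aiming for is that $\overline{\rho(H)}$ is compact, hence it stabilises some $\mathbb{Z}_p$-lattice $\Lambda\subseteq\frakn$, and therefore also every lattice $p^k\Lambda$.

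With compactness in hand, I would construct $V$ by exponentiating: for $k$ large, $V_k:=\exp_N(p^k\Lambda)$ is a compact open subgroup of $N$ (via the BCH group law on a small lattice), and these form a neighbourhood basis of $1$. Naturality of the exponential gives $\alpha_h(\exp_N x)=\exp_N(\Lie(\alpha_h)x)$ for small $x$, so $\alpha_h(V_k)=\exp_N(\rho(h)p^k\Lambda)=V_k$. Choosing $k$ with $V_k\subseteq U$ proves the first claim. For the second claim, if every inner automorphism $I^N_x$ lies in $\alpha(H)$, then $xVx^{-1}=V$ for all $x\in N$. Hence each such $V$ is normal, and since these $V$ lie inside every identity neighbourhood, $N$ is pro-discrete.

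I expect the main obstacle to be the key step, namely compactness of a compactly generated linear group consisting of elliptic elements. The exponential part is routine. If the unipotent radical argument in (c) fails, I would fall back on the structure-theoretic route: the closure of $\rho(H)$ is a compactly generated $p$-adic Lie group in which every element is elliptic, hence locally elliptic by Lemma~6.5-type arguments, hence compact by compact generation.
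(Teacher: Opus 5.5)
Your overall architecture matches the paper's: linearise via $\rho(h)=\Lie(\alpha_h)$, show $\rho(H)$ is relatively compact, take an invariant lattice $\Lambda$, and exponentiate. The final normality and pro-discreteness deduction is also fine. However, two steps do not work as written.

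\textbf{The key step.} The paper simply cites Parreau's theorem here, and your step (c) fails. Every unipotent $u\in\GL_n(\mathbb{Q}_p)$ \emph{is} elliptic, because $u^m=\sum_j\binom{m}{j}(u-1)^j$ with $\binom{m}{j}\in\bbZ$. Also, $\log(u^{p^k})=p^k\log u\to 0$ forces nothing about $\log u$. The ascending-chain argument presupposes the local ellipticity you are trying to prove. The fallback cannot be right without linearity: a finitely generated infinite torsion group, viewed as a discrete ($0$-dimensional) $p$-adic Lie group, is compactly generated with all elements elliptic. In the non-archimedean setting, though, (c) is no real obstacle. Once (b) bounds each diagonal block group (take $\bbK$ large enough that the composition factors are absolutely irreducible, as Burnside requires), conjugate by a block-diagonal matrix so that all diagonal blocks lie in $\GL_{d_i}(\mathcal{O}_{\bbK})$. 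Let $C$ be a compact generating set, and suppose the off-diagonal blocks of elements of $C\cup C^{-1}$ have entries of absolute value $\le p^r$. The ultrametric inequality then bounds the off-diagonal entries of every word in $C\cup C^{-1}$ by $p^{r(s-1)}$, where $s$ is the number of blocks.

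\textbf{The exponential step.} This step is not routine, and compact generation must be used a second time. Naturality $\alpha_h(\exp_N x)=\exp_N(\Lie(\alpha_h)x)$ holds only on a neighbourhood depending on $h$, and invariance of $\Lambda$ under $\rho(H)$ does not make it uniform. For example, let $N=\bbZ_p\times F$ with $F=\mathbb{Q}_p/\bbZ_p$ discrete, let $\alpha_j(z,f)=(z,f+p^{-j}z+\bbZ_p)$, and let $H=\langle\alpha_j\colon j\ge 1\rangle$ carry the discrete topology.
\begin{itemize}
\item Each $\alpha_h$ is the identity near $1$, so $\rho(H)=\{1\}$.
\item Yet $\alpha_j(V_k)\neq V_k=p^k\bbZ_p\times\{0\}$ for $j>k$.
\item In fact no compact open subgroup is $H$-invariant. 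Such a subgroup contains some $(p^m,0)$, so it would contain the infinitely many elements $(0,p^{m-j}+\bbZ_p)$, $j>m$, of the discrete set $\{0\}\times F$.
\end{itemize}
Your argument uses compact generation only to get compactness of $\overline{\rho(H)}$, so it would apply verbatim to this example. The paper's remedy is as follows. Put $O:=\exp_N(p^{n_0}\Lambda)$ and let $K$ be a compact generating set. By compactness of $K$ and continuity of $\widehat{\alpha}$, choose $k$ with $\widehat{\alpha}(K\times V_k)\subseteq O$. For $h\in K$ and $x\in p^k\Lambda$, the continuous homomorphisms $z\mapsto\exp_N(\rho(h)zx)$ and $z\mapsto\alpha_h(\exp_N(zx))$ from $\bbZ_p$ to $O$ have the same derivative at $0$, hence coincide. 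This gives $\alpha_h(V_k)=V_k$ for $h\in K$. It then holds for all $h\in H$, since $\{h\in H\colon \alpha_h(V_k)=V_k\}$ is a subgroup.
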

\begin{proof}
Let $K\subseteq H$ be a compact generating set.
Since $\widehat{\alpha}$ is analytic, so
is $T\widehat{\alpha}\colon TH\times TN\cong T(H\times N)\to TN$
and hence also its restriction $H\times \frakn\to\frakn$, $(h,x)\mapsto \Lie(\alpha_h)(x)$.
Notably, $h\mapsto \Lie(\alpha_h)(x)$ is analytic for $x$ in a basis of $\frakn$,
entailing that the group homomorphism
\[
\beta\colon H\to\GL(\frakn),\quad  h\mapsto \beta_h:=\Lie(\alpha_h)
\]
is analytic and hence continuous.
As a consequence, $S:=\beta(H)$
is a compactly generated subgroup of~$\GL(\frakn)$,
generated by $\beta(K)$.
By hypothesis, $\Lie(\alpha_h)$ is an elliptic element
for each $h\in H$.
Thus, by Parreau's Theorem
(see Th\'{e}or\`{e}me~1 in \cite{Par}; cf.\ Proposition~5.1 in \cite{UNI}),
the compactly generated subgroup
$S$ of $\GL(\frakn)$ is relatively compact.
There is a compact open $\bbZ_p$-submodule
$W\subseteq \frakn$ such that
\[
g(W)=W\quad\mbox{for all $\,g\in S$}
\]
(cf.\ Theorem 1 in
Part II, Chapter~IV, Appendix 1
of \cite{Ser}).
Then $\|0\|:=0$,
\[
\|x\|:=p^{-\max\{n\in \bbZ \colon x\in p^nW\}}
\]
for $0\not=x\in \frakn$
defines an ultrametric norm $\|\cdot\|\colon \frakn\to[0,\infty[$
such that $p^nW=\{x\in \frakn \colon \|x\|\le p^{-n}\}$
for each $n\in \bbZ$. By Proposition~3 in
Chapter~II, \S8, no.\,3 of \cite{Bou},
there is $n_0\in \bbZ$
such that the Baker-Campbell-Hausdorff series
converges to a group multiplication
$*\colon
p^nW\times p^nW\to p^nW$ for all integers $n\geq n_0$.
After increasing~$n_0$,
we may assume that there exists an
exponential function $\exp_N\colon p^{n_0}W\to N$
which is an analytic diffeomorphism
onto an open identity neighbourhood~$O\subseteq N$
(see Definition~1 in
Chapter~III, \S4, no.\,3 in \cite{Bou}).
After increasing~$n_0$,
we may assume that $O$
is a compact open subgroup of~$N$
and $\exp_N\colon (p^{n_0}W,*)\to O$
an isomorphism of $p$-adic Lie groups
(see Proposition~3 in
Chapter~III, \S7, no.\,2 in \cite{Bou}).
Then also
\[
U_n := \exp_N(p^nW)
\]
is a compact open subgroup
of~$N$ for each $n\geq n_0$ and $\exp_N$
restricts to an isomorphism of $p^nW$ onto~$U_n$.
Note that each continuous homomorphism
\[
\gamma\colon \bbZ_p\to O
\]
is uniquely determined by $\dot{\gamma}(0):=\frac{d}{dz}\Big|_{z=0}\gamma(z)$.
In fact, $\gamma(1)=\exp_N(x)$ for some $x\in p^{n_0}W$.
Then $\gamma(n)=\gamma(1)^n=\exp_N(x)^n=\exp_N(nx)$
for all $n\in \bbZ$, whence
\begin{equation}\label{gammaexp}
\gamma(z)=\exp_N(zx)\quad\mbox{for all $z\in\bbZ_p$}
\end{equation}
by continuity. Thus $\dot{\gamma}(0)=\frac{d}{dz}
\Big|_{z=0}
\exp_N(zx)=x$, which determines $\gamma$
via~(\ref{gammaexp}).

The map
$\widehat{\beta}\colon H\times\frakn\to\frakn$, $(h,x)\mapsto \beta_h(x)$
is continuous and $\widehat{\beta}(h,x)=0$
holds for all $(h,x)$ in the compact set $K\times\{0\}$.
Hence,
there is $m_0\geq n_0$ such that
\[
\widehat{\beta}(K\times p^{m_0}W)\subseteq p^{n_0}W.
\]
Likewise, we get
$\widehat{\alpha}(K\times \exp_N(p^{m_0}W))\subseteq O$,
after increasing~$m_0$.

Then
\begin{equation}\label{compactnaturality}
\exp_N(\beta_h(x))=\alpha_h(\exp_N(x))
\quad\mbox{for all $h\in K$ and $x\in p^{m_0}W$.}
\end{equation}
In fact,
both of the maps
$\bbZ_p\to O$ taking $z\in \bbZ_p$ to
$\exp_N(\beta_h(zx))$ and $\alpha_h(\exp_N(zx))$,
respectively, are continuous homomorphisms
with derivative $\beta_h(x)$ at $z=0$.
They therefore coincide and $z:=1$ yields (\ref{compactnaturality}).
For all $n\geq m_0$,
we deduce for each $h\in K$ that
$\alpha_h(\exp_N(p^nW))=\exp_N(\beta_h(p^nW))=\exp_N(p^nW)$,
using that $\beta_h\in S$. Thus
$\alpha_h(U_n)=U_n$
for all $h\in K$, whence $\alpha_h(U_n)=U_n$
for all $h\in H$ as $K$ generates~$H$.
Choosing $n\geq m_0$ large enough, we have
$U_n\subseteq U$. Then $V:=U_n$
is as desired.
\end{proof}

\noindent
Generalizing the case of inner automorphisms,
if $\alpha\colon G\to G$
is an automorphism
of a totally disconnected,
locally compact group~$G$,
we write $\lev(\alpha)$
for its Levi factor, viz.\
the set of all elements $x\in G$ such that
$\alpha^\bbZ(x)$ is relatively compact in~$G$.
Then $\lev(\alpha)$ is a closed
subgroup of~$G$ (cf.\ \cite{Willis1994}).
We are now ready to prove
Theorem~\ref{thm:Gloeckner W(G) open in lev(G)}.

\begin{proof}[Proof of Theorem~\ref{thm:Gloeckner W(G) open in lev(G)}]
Consider the group homomorphism
$I\colon G\to\Aut(G)$,
$g\mapsto I_g$.
The associated action $\widehat{I}\colon G\times G\to G$, $(g,x)\mapsto I_g(x)=gxg^{-1}$
is analytic.
Since $A:=\lev(G)$ is a normal subgroup of~$G$,
we obtain a group homomorphism
\[
\alpha\colon G\to\Aut(A),\quad g\mapsto \alpha_g:=I_g|_A^A.
\]
Being a closed normal subgroup,
$A$ is a Lie subgroup of~$G$,
entailing that the restriction
\[
\widehat{\alpha}\colon G\times A\to A, \quad (g,x)\mapsto \alpha_g(x)
\]
of $\widehat{I}$ to $G\times A$ is analytic as well.
Let $\frakg:=\Lie(G)$ be the Lie algebra of~$G$.
We consider $\fraka:=\Lie(A)$ as a Lie subalgebra
of~$\frakg$ and likewise $\frakl_g:=\Lie(\lev(g))$
for $g\in G$.
Consider the adjoint representation
$\Ad\colon G\to\Aut(\frakg)$,
$g\mapsto \Ad_g:=\Lie(I_g)$
of~$G$.
Since $A$ is a closed normal subgroup of~$G$,
we have
$\Ad_g(\fraka)=\fraka$ for all $g\in G$.
We therefore obtain a group homomorphism
\[
\beta\colon G\to \Aut(\fraka),\quad g\mapsto \beta_g:=\Ad_g|_{\fraka}^{\fraka}.
\]
The identification of $\Lie(A)$ with a vector subspace of~$\Lie(G)$
implies that $\beta_g=\Lie(\alpha_g)$.\\[2.3mm]
It is known that
$\Lie(\lev(I_g))=\lev(\Ad_g)$
holds for $\Ad_g$ as an automorphism
of the tdlcsc group
$(\frakg,+)$;
see Theorem 3.5\,(iv) in \cite{Wan}.
Thus
\[
\beta_g^\bbZ(x)=\Ad_g^\bbZ(x)
\]
is relatively compact
in~$\frakg$ (and hence in~$\fraka$)
for each $x\in \fraka\subseteq \lev(\Ad_g)$.
Let $\bbK\subseteq\overline{{\mathbb Q}}_p$ be a splitting field
for the characteristic polynomial of~$\Ad_g$.
By Theorem~3.6\,(b) in~\cite{MAT},
all eigenvalues of the automorphism
$(\beta_g)_\bbK:=\beta_g\otimes_{{\mathbb Q}_p}\text{id}_\bbK$
of $\fraka_\bbK:=\fraka\otimes_{{\mathbb Q}_p}\bbK$
must have absolute value~$1$ in $\overline{{\mathbb Q}}_p$.
Thus $\beta_g$ is an elliptic element of $\GL(\fraka)$,
by Lemma~\ref{glo1}.
Applying Lemma~\ref{glo2} to $H:=G$
and $N:=A$,
we conclude that each identity neighbourhood
$U$ of~$A$ contains a compact open subgroup~$V$
of~$A$ which is invariant under
$\alpha_g=I^G_g|_A^A$ for all $g\in G$.
Since $I^G_g|_A^A=I^A_g$ for all $g\in A$,
we see that~$V$ is, in particular,
a normal subgroup of~$A$.
\end{proof}

\begin{proof}[Proof of Theorem \ref{thm:p-adic}\ref{cg-or-alg}, compactly generated case]
    By Theorem \ref{thm:Gloeckner W(G) open in lev(G)}, $\lev(G)$
    has a compact,
    open subgroup $N$ which is normal
    in~$G$.
    Let $\mu$ be an ergodic compact IRS of $G$. 
    By Theorem \ref{thm:levi}, $\ncm$ is contained in $\lev(G)$. Therefore, we can push $\mu$ forward to an IRS $\mu/N$ on the countable group $\lev(G)/N$ that is ergodic under the $G$-action; since $
    \mu$ is almost surely compact and $
    \mathrm{lev}(G)/N$ is discrete, $\mu/N$ is almost surely finite. Since $\lev(G)/N$ is countable, it admits only countably many finite subgroups; since $\mu/N$ is ergodic under the $G$-action on $\mathrm{Sub}(\lev(G)/N)$, it must be supported on a finite $G$-conjugacy class. It follows from  U\v{s}akov's Lemma (Lemma \ref{Usakov}) that there is a finite, normal subgroup $F$ of $\lev(G)/N$ such that $\mu/N$ is contained in $F$. 
    We conclude with Lemma \ref{lem:compact normal quotient}.
\end{proof}

In Section \ref{reduction}, we will prove that a tdlcsc group satisfies Conclusion \ref{weak} if and only if every compactly generated subgroup of it does (Corollary \ref{cor:reduction of weak conc to cg}), obtaining Theorem \ref{thm:p-adic}\ref{general-p-adic}.

\subsection{Algebraic groups}

\begin{theorem}[Theorem \ref{thm:p-adic}\ref{cg-or-alg}, algebraic case]\label{thm:algebraic groups}
    Let $\mathbf{G}$ be a connected algebraic group defined over $k$, a local field of characteristic zero, and let $G=\mathbf{G}(k)$. Then every ergodic compact IRS of $G$ is contained in a compact normal subgroup.
\end{theorem}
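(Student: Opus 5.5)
By Theorem~\ref{thm:levi} we have $\ncm\le\lev(G)$ whenever $G$ is totally disconnected. So the plan is to rerun the proof of the compactly generated case of Theorem~\ref{thm:p-adic}\ref{cg-or-alg}, using algebraicity in place of compact generation. Compact generation genuinely fails here: $\mathbf{G}_a(\mathbb{Q}_p)$ is not compactly generated. If $k$ is archimedean, $G$ is a real Lie group (for $k=\mathbb{C}$, after restriction of scalars) and Theorem~\ref{thm:Lie} applies directly. So assume $k$ is a finite extension of $\mathbb{Q}_p$. Restriction of scalars then makes $G$ a $p$-adic Lie group, with $\frakg=\Lie(G)=\Lie(\mathbf{G})(k)$ and $\Ad$ given by the algebraic adjoint representation.

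\textbf{Step 1 (the only place compact generation was used).} In Lemma~\ref{glo2}, compact generation of $H$ served only one purpose: together with Parreau's theorem it showed that $S=\beta(H)\subseteq\GL(\fraka)$ is relatively compact, where $A=\lev(G)$ and $\fraka=\Lie(A)$. The rest of that proof then needs only an $S$-invariant compact open $\bbZ_p$-lattice $W\subseteq\fraka$. One adjustment: the step ``$\alpha_h(U_n)=U_n$ for $h\in K$, hence for all $h$'' must be replaced by proving (\ref{compactnaturality}) for every $h\in G$ simultaneously. This works once $S$ is bounded, since the continuity estimate can then be made uniform over the relatively compact set $\{\beta_h\}$, for instance by choosing $m_0$ using $\exp_N$ and the local $\mathbb{Z}_p$-action. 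So the task is to show $S=\Ad(G)|_\fraka$ is relatively compact without compact generation. As in the proof of Theorem~\ref{thm:Gloeckner W(G) open in lev(G)}, the results of \cite{Wan} and \cite{MAT} together with Lemma~\ref{glo1} show that every element of $S$ is elliptic.

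\textbf{Step 2 (main obstacle): elliptic elements force boundedness here.} Let $\mathbf{A}$ be the Zariski closure of $A$; it is normal in $\mathbf{G}$, and $\fraka$ is stable under $\Ad(\mathbf{G})$ (if $\fraka$ is not algebraic, first replace it by the smallest algebraic subalgebra containing it and restrict afterwards). Let $\mathbf{S}$ be the image of $\mathbf{G}\to\GL(\fraka)$. Then $S\subseteq\mathbf{S}(k)$, and $S$ has finite-index-type control in $\mathbf{S}(k)$: the image of $k$-points under a $k$-morphism of algebraic groups is open of finite index in char $0$ on its closure. I would argue as follows:
\begin{itemize}
\item Since every element of $S$ is elliptic, $\mathbf{S}$ can contain no nontrivial $k$-split torus. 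A cocharacter $\lambda$ of a split torus would give elements $\lambda(t)$ with eigenvalues $t^n$ and $|t|\neq1$, and some $g\in G$ maps to $\lambda(t)^m$ by the finite-index statement.
\item $\mathbf{S}$ can contain no nontrivial $k$-unipotent elements. If $u\neq1$ is unipotent in $\mathbf{S}(k)$, the Jacobson--Morozov/Borel--Tits argument shows that either the unipotent radical of the image is nontrivial, or $u$ lies in a $k$-isotropic reductive part. In the first case elements of $S$ have unbounded, non-elliptic powers $u^{n}$, with $\overline{\langle u\rangle}\cong\bbZ_p$ compact; the real point is that $\mathbf{S}(k)$ is then non-compact. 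Here I would use the Borel--Tits theorem that $\mathbf{S}(k)$ is compact if and only if $\mathbf{S}^0$ is reductive and $k$-anisotropic. For the unipotent radical, use instead that $\mathbf{G}$ acts on its Lie algebra, and that a nontrivial unipotent normal image inside $\GL(\fraka)$ forces a $\mathbf{G}$-weight with $|\chi|\neq1$ or a nilpotent-Jordan pair producing non-elliptic products. This is exactly where care is needed, and I expect it to be the hardest point.
\end{itemize}
Granting that $\mathbf{S}^0$ is reductive and $k$-anisotropic, $\mathbf{S}(k)$ is compact, so $S$ is relatively compact.

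\textbf{Step 3 (conclusion).} With $S$ bounded, the modified Lemma~\ref{glo2} yields a compact open subgroup $N\le\lev(G)$ that is normal in $G$. The argument of the compactly generated case then applies verbatim: $\mu/N$ is an ergodic IRS of finite subgroups of the countable group $\lev(G)/N$, so it is supported on a single finite $G$-conjugacy class. U\v{s}akov's Lemma~\ref{Usakov} places this class in a finite normal subgroup, and Lemma~\ref{lem:compact normal quotient} finishes the proof.
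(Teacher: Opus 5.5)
Your Step 2 fails at the unipotent bullet, and this is not a technicality: the intermediate claim that Steps 1--2 are meant to produce is false for algebraic groups. That claim is a compact open subgroup of $\lev(G)$ that is normal in $G$.

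Over a $p$-adic field, unipotent matrices \emph{are} elliptic. Their eigenvalues all equal $1$, so Lemma~\ref{glo1} applies; you note yourself that $\overline{\langle u\rangle}\cong\bbZ_p$. Ellipticity of every element of $S$ therefore rules out $k$-split tori, as in your first bullet. It places no restriction on the unipotent radical of $\mathbf{S}$: for instance $\mathbf{G}_a(k)$ consists of elliptic elements without being compact. So ``every element of $S$ is elliptic'' does not make $S$ relatively compact. Likewise, ``$\mathbf{S}(k)$ is then non-compact'' contradicts nothing you have established.

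Here is a concrete counterexample. Let $G$ be the Heisenberg group $\mathbf{U}_3(\mathbb{Q}_p)$ with centre $Z$. Since $g^nhg^{-n}=h\,[g,h]^n$ with $[g,h]\in Z\cong\mathbb{Q}_p$ and $|n|_p\le 1$, all conjugation orbits are bounded. Hence $\lev(G)=G$ and $\fraka=\frakg$, while $S=\Ad(G)\cong G/Z\cong\mathbb{Q}_p^2$ is an unbounded unipotent group. Moreover, no compact open subgroup $V$ of $G$ is normal. Such a $V$ would contain a non-central element $g$, and then all commutators $[h,g]$ for $h\in G$, which exhaust the non-compact group $Z$. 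Thus $\calW(G)=Z$ is not open in $\lev(G)$. Theorem~\ref{thm:Gloeckner W(G) open in lev(G)} genuinely needs compact generation, and your Step 3 has no $N$ to work with.

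The missing idea is to trap $\ncm$ in a subgroup smaller than $\lev(G)$: the closed normal subgroup $B(G)$ of elements with relatively compact conjugacy class. The paper does this with Sit's theorem, which says every conjugation-invariant probability measure on $\mathbf{G}(k)$ is carried by $B(G)$.
\begin{itemize}
\item Applied to the measure $\nu$ of Proposition~\ref{AveragingMu}, it shows that $\mu$-almost every $H$ is a compact subgroup of $B(G)$.
\item Such an $H$ therefore lies in $B(G)\cap P(G)=\calW(G)$, where $P(G)$ is the set of elliptic elements.
\item Since $\calW(G)$ is closed (Remark~\ref{closedpadic}), Lemma~\ref{lem:non-ergodic} places the ergodic IRS $\mu$ in a single compact normal subgroup.
\end{itemize}
Your reduction of the archimedean case to Theorem~\ref{thm:Lie} is fine.
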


\begin{proof}
    We consider the probability measure $\nu$ defined in Proposition \ref{AveragingMu}. By \cite[Theorem 3.1]{FinitCentralMeasures}, every conjugation-invariant probability measure on $G$ is supported on $B(G)$, the subgroup of elements whose conjugacy class has compact closure.\footnote{A note on terminology: Sit uses the term `central' rather than `conjugation-invariant', and opts to talk about finite measures rather than probability measures.}
Since $\bigcup\supp(\mu)\subseteq\supp(\nu) $, it follows that $\bigcup\supp(\mu)$ is contained in $B(G)$. We already know $\bigcup\supp(\mu)$ is contained in $P(G)$, the subset 
of elliptic elements.
Observe that $B(G)\cap P(G)=\calW(G)$ (\cite[Proposition 2.4(2)]{Cornulier_commability}). Recalling Remark~\ref{closedpadic}, it follows that $\ncm\le \overline{\calW(G)}=\calW(G)$, as needed. 
\end{proof}

\section{Groups acting on metric spaces}

Throughout this
section, $(X,d)$ denotes a metric space on which an lcsc group $G$ acts continuously by isometries.
For a subset $A \subset X$, its \emph{diameter} is $\diam(A) := \sup_{x,y \in A} d(x,y) \in [0,\infty]$; the set $A$ is \emph{bounded} if its diameter is finite.
Recall that the action of $G$ is called \emph{cobounded} if there exists a bounded set $B \subset X$ such that $X = \bigcup_{g \in G} gB$.
The action is called \emph{proper} if for every bounded $B \subset X$ the set $\{g \in G \mid gB \cap B \neq \emptyset\}$ has compact closure.

\begin{lemma}\label{lem:r-bounds some point}
    Let $\mu$ be an ergodic compact IRS of $G$ and let $x \in X$.
    Then there exists $R > 0$ such that for $\mu$-almost every $H \le G$ there exists $g \in G$ with $\diam(H(gx)) \le R$.
\end{lemma}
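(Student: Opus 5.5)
For each $H$ and $R>0$, define the function
\[
f_R(H) := \inf_{g\in G}\diam(H(gx))
\]
and the set $A_R := \{H \in \calC \mid f_R(H)\le R\}$, where $\calC$ is the set of compact subgroups, as in Lemma~\ref{Borel}. Concretely, $A_R$ consists of those compact $H$ having some point $gx$ of the orbit $Gx$ whose $H$-orbit has diameter at most $R$. The plan is to show three things: each $A_R$ is $G$-invariant and measurable, the sets $A_R$ exhaust $\calC$ up to a null set as $R\to\infty$, and then ergodicity forces one of them to have full measure.

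\emph{Invariance.} For $k\in G$ we have $(kHk^{-1})(gx) = k\,H(k^{-1}gx)$. Since $k$ acts isometrically, this set has the same diameter as $H(k^{-1}gx)$, and $k^{-1}g$ ranges over $G$ as $g$ does. Hence $A_R$ is conjugation-invariant.

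\emph{Exhaustion.} Every compact $H$ lies in $A_R$ for some finite $R$. Indeed, take $g=1$: the orbit map $H\to X$, $h\mapsto hx$, is continuous because the action is continuous, so $Hx$ is compact and therefore bounded. Consequently $\bigcup_{n\in\bbN}A_n=\calC$, which has full $\mu$-measure.

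\emph{Conclusion from ergodicity.} The sets $A_n$ increase with $n$, so $\mu(A_n)\to 1$, and in particular $\mu(A_n)>0$ for some $n$. By ergodicity $\mu(A_n)=1$. Taking $R:=n+1$ then absorbs the infimum in the definition of $f_R$: an infimum of at most $n$ is realised, up to slack, by some $g$ with $\diam(H(gx))\le R$.

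The main obstacle I expect is measurability of $A_n$, or of $H\mapsto f(H)$. If that is hard to establish, it is enough that $A_n$ be $\mu$-measurable, or one can replace $A_n$ by a measurable invariant subset.

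To get measurability, use second countability to pick a countable dense subset $D\subseteq G$. For a compact $H$, the map $g\mapsto\diam(H(gx))$ is continuous in $g$, since $d(hgx,hg'x)=d(gx,g'x)$ for every $h$. So the infimum over $G$ equals the infimum over $D$. It then remains to show that for fixed $g$ the map $H\mapsto\diam(H(gx))$ is Borel on $\calC$. I would write this diameter as
\[
\diam(H(gx))=\sup_{h,h'\in H} d(hgx,h'gx),
\]
and check lower semicontinuity in the Chabauty topology. If $H_i\to H$, then each $h\in H$ is a limit of elements $h_i\in H_i$, and continuity of the action gives lower semicontinuity. Restricted to each closed piece $\{H\subseteq K_m\}$ from the proof of Lemma~\ref{Borel}, this yields a Borel function. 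Alternatively, one can avoid the issue altogether by working with the completion of $\mu$.
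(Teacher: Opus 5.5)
Your argument is correct and is essentially the paper's. The paper picks $n$ with $\mu(\{H \mid \diam(Hx)<n\})>0$ and applies ergodicity to its $G$-saturation $\{H \mid \diam(H(gx))<n \text{ for some } g\in G\}$; like your $A_n$, this is a sublevel set of the conjugation-invariant function $H\mapsto\inf_{g\in G}\diam(H(gx))$. Your measurability argument also fills in a point the paper leaves implicit, namely that this saturated set is Borel: you use continuity of $g\mapsto\diam(H(gx))$ to pass to a countable dense subset, and lower semicontinuity of $H\mapsto\diam(Hy)$ in the Chabauty topology.
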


\begin{proof}
    Since the action of $G$ is continuous, for every compact $H \le G$ and every $x \in X$ we have $\diam(Hx) < \infty$.
 Note that the map $\Sub(G)\to [0,\infty]$, $H\mapsto \diam(Hx)$ is measurable. Hence, for every $x \in X$,
    $$1=\mu\Big(\bigcup _{n\in \mathbb{N}}\{H ~|~\diam(Hx)<n\}\Big).$$ It follows that there exists $n>0$ for which $\mu(\{H~|~\diam(Hx)<n\})>0$. Since $\mu$ is ergodic, $\mu(\{H~|~\diam(Hgx)<n\text{, for some $g\in G$}\})=1$, as claimed.  
\end{proof}

\begin{corollary}\label{cor:big balls}
    Suppose the action of $G$ on $X$ is cobounded, and 
    let $\mu$ be an ergodic compact IRS of $G$. Then there exists $R > 0$ such that for any subset $A \subset X$ containing arbitrarily large balls,
    for $\mu$-almost every $H \le G$, the set $\{x \in A ~|~ \diam(Hx)\le R\}$ is unbounded.
\end{corollary}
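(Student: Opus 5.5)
The plan is to derive Corollary~\ref{cor:big balls} from Lemma~\ref{lem:r-bounds some point}, using coboundedness to make the radius $R$ independent of any base point. We also use that if $H$ moves a point $y$ by at most $R$ in diameter, then every conjugate $gHg^{-1}$ does the same at $gy$.

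\textbf{Choice of $R$.} Fix a base point $x_0 \in X$ and let $B \subset X$ be a bounded set with $X = \bigcup_{g\in G} gB$; we may assume $x_0 \in B$. Lemma~\ref{lem:r-bounds some point} applied to $x_0$ gives $R_0>0$ and a $\mu$-conull set $\Omega_0$ such that every $H\in\Omega_0$ has some $g\in G$ with $\diam(H(gx_0))\le R_0$. For such $H$ and any $k\in G$ we have
\[
\diam\big((kHk^{-1})(kgx_0)\big)=\diam\big(H(gx_0)\big)\le R_0 ,
\]
because $k$ acts by isometries. Let $\Omega=\bigcap_{\gamma\in\Gamma}\gamma\Omega_0\gamma^{-1}$ for a countable dense $\Gamma\le G$; this is conull. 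Alternatively we may simply replace $\Omega_0$ by its saturation, which is still conull since $\mu$ is invariant.

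\textbf{Producing many good points.} The aim is to show that $\mu$-almost every $H$ has \emph{unboundedly many} points $y$ with $\diam(Hy)\le R$, spread all over $X$, and not only one such point. I would argue via ergodicity and a quantitative version of the lemma. For $r>0$ and $y\in X$, the function
\[
f_r(H)=\mathbbm{1}\big[\exists\, z\in B(y,r)\colon \diam(Hz)\le R_0\big]
\]
is measurable. By Lemma~\ref{lem:r-bounds some point}, the sets $E_r=\{H : \exists z \in G x_0,\ d(z,x_0)\le r,\ \diam(Hz)\le R_0\}$ increase to a conull set. Fix $r_0$ with $\mu(E_{r_0})>1/2$.

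Now take a ball $B(y,\rho)\subset A$ with $\rho\gg r_0$, and choose $g_1,\dots,g_m$ such that the points $g_ix_0$ lie in $B(y,\rho-r_0)$ and are pairwise at distance greater than $2r_0$. Coboundedness lets us find such points with $m$ as large as we like, and an enlarged $R:=R_0$ suffices. Invariance gives $\mu(g_iE_{r_0}g_i^{-1})>1/2$ for each $i$. Hence for each pair, or more precisely using that the events $g_iE_{r_0}g_i^{-1}$ all have probability greater than $1/2$, a positive-measure set of $H$ lies in at least one of them. So with positive probability $H$ has a good point in $B(y,\rho)\subset A$.

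\textbf{Main obstacle.} The main difficulty is upgrading ``positive probability of a good point in each far ball'' to ``almost surely an unbounded set of good points in $A$''. Ergodicity alone gives little here, since the events are not invariant. My first attempt would be a Borel--Cantelli/Fatou argument. Choose disjoint balls $B_j\subset A$ escaping to infinity, with radii tending to infinity, and let $F_j$ be the event that $H$ has a good point in $B_j$. If $\mu(F_j)\ge c>0$ for all $j$, then $\mu(\limsup F_j)\ge c$. The event $\limsup_j F_j$, that there are good points in $A$ at arbitrarily large distance, needs to be shown to have full measure. To get $c$ close to $1$, I would use that larger balls contain more disjoint translates $g_iB(x_0,r_0)$. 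The probability that none of them contains a good point is at most $\mu\big(\bigcap_i g_i E_{r_0}^c g_i^{-1}\big)$. I would then try to show that this intersection has measure tending to $0$ as the number of translates grows, via a mean-ergodic/averaging argument: the fraction $\frac1m\sum_i \mathbbm{1}_{g_iE_{r_0}g_i^{-1}}$ has mean greater than $1-\epsilon$ when $r_0$ is chosen with $\mu(E_{r_0})>1-\epsilon$. By Markov's inequality, the probability that this fraction is $0$ is at most $\epsilon$. Letting $\epsilon\to 0$ along a sequence of $r_0$'s, while keeping $R=R_0$ fixed, gives $\mu(F_j)\to1$. This yields full measure of $\limsup F_j$, which is the unboundedness statement.
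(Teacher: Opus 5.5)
Your argument is correct and is essentially the paper's. The real work in both is choosing $r_0$ with $\mu(E_{r_0})>1-\epsilon$ and moving that event into a ball inside $A$ using invariance of $\mu$ and coboundedness. You then get unboundedness from $\mu(\limsup_j F_j)\ge\limsup_j\mu(F_j)=1$; the paper instead reapplies the nonemptiness statement to each $A\setminus B(x,N)$ and intersects over $N$. The several separated translates and the Markov step are superfluous: a single translate with $B(gx_0,r_0)\subset B_j$ already gives $\mu(F_j)\ge\mu(E_{r_0})>1-\epsilon$. The failure probability is controlled by the choice of $r_0$, not by the number $m$ of translates, so your claim that it tends to $0$ as $m$ grows is neither shown nor needed.
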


\begin{proof}
    Fix $x\in X$ and let $R>0$ be as given by Lemma \ref{lem:r-bounds some point}.
    We first show that, $\mu$-almost surely, the set $\{x\in A~|~\diam(Hx)\le R\}$ is non-empty. 
    For a subgroup $H$, denote $H_R=\{y\in X ~|~\diam(Hy)\le R\}$. By the choice of $R$, we have $H_R\neq \emptyset$ for $\mu$-almost every $H$. It follows that $$\mu\Big( \bigcup_{n>0}\{ H ~|~H_R\cap B(x,n)\neq \emptyset \} \Big)=1,$$
    and the union is increasing. Let $\epsilon>0$, so there exists $n>0$ for which $\mu\big(\{ H ~|~H_R\cap B(x,n)\neq \emptyset \} \big)>1-\epsilon$. By assuming $n$ is large enough, and since the action is cobounded, there exists $g_n\in G$ such that $B(g_nx,n)\subset A$. Since $\mu$ is invariant, we get $\mu\big(\{ H ~|~H_R\cap B(g_nx,n)\neq \emptyset \} \big)>1-\epsilon$, and in particular, $$\mu\big(\{ H ~|~H_R\cap A\neq \emptyset \} \big)>1-\epsilon.$$ As $\epsilon$ was arbitrary, $\mu\big(\{ H ~|~H_R\cap A\neq \emptyset \} \big)=1$.

    Now, note that for every $N \in \bbN$ the set $A_N := A \setminus B(x,N)$ still contains arbitrarily large balls, and since a countable intersection of sets of full measure
    has full measure, $\mu$-almost every $H \le G$ satisfies $H_R \cap A_N \neq \emptyset$ for all $N$. This finishes the proof. 
\end{proof}

\subsection{Hyperbolic spaces}

The group $G$ is called \emph{hyperbolic} if it acts properly and coboundedly on a proper geodesic Gromov hyperbolic space $(X,d)$. However, in order to also encompass the case of infinitely ended groups, we do not assume properness of the space $X$ in this subsection.
We refer to \cite[Section 3]{caprace2015amenable} for basics.
In particular, the Gromov boundary $\partial X$ is not defined via equivalence classes of geodesic rays, but equivalence classes of Cauchy--Gromov sequences (which we do not define here).
The reader only interested in proper spaces can think of a Cauchy--Gromov sequence $(x_n)$ as points $c(n)$, where $c \colon [0,\infty) \to X$ is a geodesic ray.
Since the action of $G$ on $X$ is by isometries, it induces a continuous action on $\partial X$.
The action of $G$ on $X$ is called \emph{non-elementary} if there exist two hyperbolic elements in $G$ whose pairs of fixed points on the boundary are not the same (it is still possible that all hyperbolic elements have one common fixed point; in that case the hyperbolic group is called \emph{focal}).

The goal of this subsection is to prove the following theorem, of which Theorem \ref{thm:hyp or ends} is a special case: \begin{theorem}\label{thm:hyperbolic}
    Let $G$ be an lcsc group acting non-elementarily, properly and coboundedly on a (not necessarily proper) geodesic hyperbolic space $(X,d)$.
    Then the polycompact radical $\calW(G)$ is compact and contains every compact IRS.
\end{theorem}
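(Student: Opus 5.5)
Two things have to be shown: that $\calW(G)$ is compact, and that every compact IRS lives inside it. By Lemma~\ref{lem:non-ergodic} (first part) it suffices to treat an ergodic compact IRS $\mu$ and prove that $\ncm$ lies in a compact normal subgroup; compactness of $\calW(G)$ will then absorb the rest.

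\textbf{Step 1: $\calW(G)$ is compact.} Any compact normal subgroup $K$ has a bounded orbit $Kx$. By normality, every $G$-translate $gKx = K(gx)$ is bounded with the same diameter, so $K$ moves every point of $X$ a uniformly bounded amount. Since the action is non-elementary, I will take two independent loxodromic elements. Points in the coarse intersection of their axes are moved by $K$ a bounded amount, and properness then shows that $K$ lies in a fixed compact set $\{g \mid gB\cap B\neq\emptyset\}$, with $B$ a suitable bounded set independent of $K$. Hence the union $\calW(G)$ of all such $K$ is relatively compact. It is a directed union of normal subgroups, so its closure is a compact normal subgroup, and therefore $\calW(G)$ is itself compact and normal.

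\textbf{Step 2: an ergodic compact IRS has uniformly bounded displacement.} Take $R$ from Corollary~\ref{cor:big balls}. I will take $A$ to be a neighbourhood of a quasi-geodesic ray towards a boundary point $\xi$. Cobounded geodesic hyperbolic spaces on which $G$ acts non-elementarily contain arbitrarily large balls near rays: one can fatten the ray, or use cones. Then for $\mu$-a.e.\ $H$, the set $H_R$ meets $A$ in an unbounded set. Choosing countably many rays towards a dense countable set of limit points $\xi$, I get that a.s.\ $H$ moves points far out towards each of these $\xi$ by at most $R$. By hyperbolicity (thin triangles/quasi-centres), an element moving two points $y,z$ each by at most $R$ moves every point of the geodesic $[y,z]$ by at most $R+C\delta$. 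Taking $y,z$ far out towards two distinct limit points, and then towards many limit points, I conclude that $H$ moves every point in the weak hull of a dense set of the limit set by a bounded amount. Coboundedness and non-elementarity make this hull coarsely dense in $X$, so a.s.\ $H$ has displacement at most some $R'$ on all of $X$.

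\textbf{Step 3: conclusion.} The set $D=\{g\in G \mid d(gx,x)\le R' \ \forall x\in X\}$ is conjugation invariant, closed, and contained in a compact set by properness. It is also closed under products up to doubling $R'$. The closed subgroup generated by $\bigcup\supp\mu\subseteq D$ consists of elements of bounded displacement; I will check this via $\langle D\rangle$ being normal with every element having bounded displacement, and use the Step~1 argument to see it is relatively compact. Hence $\ncm$ is a compact normal subgroup, so $\ncm\le\calW(G)$.

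The main obstacle is Step~2: passing from ``$H$ moves some far-out points little'' to ``$H$ moves everything boundedly'' in a non-proper space, including the focal case. There the limit set is cleanest to handle via the fixed point $\xi$ and horoballs, so I would handle the focal case by a separate horospherical argument if needed.
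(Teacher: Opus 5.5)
Your argument is correct in outline, but it takes a different route from the paper. The paper uses Corollary~\ref{cor:big balls}, with the same set $A=\bigcup_n B(x_n,n/4)$ you propose, only to show that $\mu$-almost every $H$ fixes a given $\xi\in\partial X$ (Lemma~\ref{lem:fixes boundary point}). Since point stabilisers are closed, this puts $\ncm$ inside the kernel $K$ of the boundary action. The paper then identifies the bounded-displacement subgroup $B(G)$ with $\calW(G)$, and cites \cite[Lemma 3.6]{caprace2015amenable} to get both $B(G)=K$ and compactness of $B(G)$.

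You use the same input more strongly. The points with $R$-small $H$-orbits that accumulate at boundary points, together with thin quadrilaterals and coarse density of the hull, give a uniform displacement bound for $H$ on all of $X$. You then replace the cited lemma by a direct argument: every element of bounded displacement coarsely fixes one fixed bounded set, so properness puts all such elements in a single compact set. Your Steps 1 and 3 together are really the single statement $B(G)=\calW(G)$ compact. Your route is self-contained and proves the stronger intermediate fact that almost surely $H$ has uniformly bounded displacement. The paper's route is shorter and never has to control the boundary kernel by hand.

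Two points need repair.

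First, the focal case causes trouble in Step 1, not in Step 2. The paper's notion of non-elementary allows all loxodromics to share a fixed point $\xi_0$. Then no two loxodromics have disjoint fixed pairs, and the coarse intersection of two axes is an unbounded ray, so it does not give you a bounded set $B$. Instead take three distinct boundary points $\xi_0,a^-,b^-$, which every element of bounded displacement fixes, and let $B$ be a ball around a quasi-centre of the ideal triangle they span. This handles both the focal and general-type cases at once. Step 2, by contrast, needs no separate horospherical argument. Coarse density of the hull follows from coboundedness alone, because $X$ lies in a bounded neighbourhood of $\bigcup_{g\in G} gL$ for a quasi-axis $L$ of any single loxodromic.

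Second, a countable dense set of boundary points requires $\partial X$ to be separable. That is true here, since the proper cobounded action makes $X$ quasi-isometric to a proper space, but you can avoid the issue as the paper does. Fix $x\in X$ and apply Corollary~\ref{cor:big balls} to two boundary points whose quasi-line passes uniformly close to $x$. This gives $\mu(\{H : H\subseteq D_x\})=1$, where $D_x=\{g : d(gx,x)\le R'\}$. Since $D_x$ is closed, $\{H : H\subseteq D_x\}$ is Chabauty-closed, so $\bigcup\supp\mu\subseteq\bigcap_{x} D_x=D$ with no countability needed.
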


The proof of Theorem \ref{thm:hyperbolic} is given
after the following lemma.

\begin{lemma}\label{lem:fixes boundary point}
    Let $G$ be an lcsc group acting coboundedly on a hyperbolic space $(X,d)$.
    Let $\xi\in \partial X$ and let $\mu$ be an ergodic compact IRS of $G$. 
    Then $\mu$-almost every subgroup fixes $\xi$.
\end{lemma}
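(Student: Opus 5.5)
We combine Corollary~\ref{cor:big balls} with a standard hyperbolic-geometry fact: an isometry that moves every point of a long quasi-geodesic segment by a bounded amount, and this for points arbitrarily far out towards $\xi$, must fix $\xi$.

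First we choose the set $A$. Represent $\xi$ by a Cauchy--Gromov sequence, or in the proper case by a geodesic ray $c$ from a basepoint $x_0$. Let $A$ be a horoball-like neighbourhood of $\xi$, for instance
\[
A:=\{y\in X \mid (y\cdot\xi)_{x_0}\ge d(x_0,y)-C\}
\]
for a suitable constant $C$ depending on $\delta$; this is roughly the set of points lying close to a geodesic from $x_0$ to $\xi$, thickened. We want a set that contains arbitrarily large balls and all of whose unbounded sequences converge to $\xi$. A thickened ray contains no large balls, so instead we take $A$ to be the union over $n$ of balls $B(y_n,n)$ whose centres $y_n$ go out towards $\xi$ fast enough, say $d(x_0,y_n)\ge e^n$ with $y_n$ on a quasi-ray to $\xi$. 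Then any sequence in $A$ tending to infinity converges to $\xi$ in $X\cup\partial X$, because $(z\cdot y_n)_{x_0}\ge d(x_0,y_n)-n\to\infty$ for $z\in B(y_n,n)$. By Corollary~\ref{cor:big balls}, there is $R>0$ such that for $\mu$-a.e.\ $H$ there are points $z_k\in A$ with $z_k\to\infty$ and $\diam(Hz_k)\le R$. Passing to a subsequence, $z_k\to\xi$.

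Next, fix such an $H$ and $h\in H$. Then $d(hz_k,z_k)\le R$ for all $k$. Since $z_k$ is a Cauchy--Gromov sequence converging to $\xi$ and $(hz_k\cdot z_k)_{x_0}\ge d(x_0,z_k)-R\to\infty$, the sequence $hz_k$ is equivalent to $z_k$. On the other hand, $hz_k\to h\xi$ by continuity of the boundary action. Hence $h\xi=\xi$, so $H$ fixes $\xi$. Only $\mu$-conullness is needed, which Corollary~\ref{cor:big balls} gives directly; measurability of the relevant sets is implicit there.

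The main obstacle is constructing $A$ in a possibly non-proper space. We need all the balls $B(y_n,n)$ to accumulate only at $\xi$, which requires choosing $y_n\to\xi$ with $d(x_0,y_n)-n\to\infty$. Such points exist because $\xi$ is represented by a Cauchy--Gromov sequence and $X$ is unbounded near it, so we may simply take $y_n$ to be a subsequence of that sequence. The remaining Gromov-product estimate $(z\cdot y_n)_{x_0}\ge d(x_0,y_n)-d(z,y_n)$ is elementary and holds in any metric space, so the argument should go through.
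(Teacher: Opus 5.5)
Your proof is correct and is essentially the paper's argument. Like the paper, you take $A$ to be a union of balls centred along a Cauchy--Gromov sequence for $\xi$, with radii tending to infinity but small compared to the centres' distance from the base point, apply Corollary~\ref{cor:big balls}, and deduce $h\xi=\xi$ from bounded displacement far out towards $\xi$. The only difference is in the last step: the paper transfers the bound to the centres, getting $d(x_k,hx_k)\le k/2+R$ and contradicting the linear divergence of inequivalent sequences, whereas you show directly that the points $z_k$ converge to $\xi$ and that $(hz_k\cdot z_k)_{x_0}\to\infty$.
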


\begin{proof}
Choose a basepoint $x \in X$.
It follows from the definitions that, given Cauchy--Gromov sequences $(x_n),(y_n)$ satisfying $d(x_n,x), d(y_n,x) \geq n$,
if they do not represent the same boundary point then  $d(x_n,y_n) \geq n$ for all large enough $n\in \mathbb{N}$.

Let $(x_n)$ be a Cauchy--Gromov sequence representing $\xi$.
Consider the set $A=\bigcup_{n\ge 0}B(x_n,\frac{1}{4}n)$. Note that $A$ contains arbitrarily large balls, and therefore, by Corollary \ref{cor:big balls} there exists $R > 0$ such that for $\mu$-almost every subgroup $H$ the set $H_R := \{x\in A ~|~ \diam(Hx)\le R \}$ is unbounded.
In particular, for $\mu$-almost all $H$ there exist infinitely many $k$'s such that, for every $h \in H$ we have 
$d(x_k,hx_k) \le d(x_k,x) + d(x,hx) + d(hx,hx_k) = 2 d(x_k,x) + d(x,hx) \le \frac{1}{2} k+R$, where $x \in H_R \cap B(x_k,\frac{1}{4}k)$.
In view of the first paragraph, this implies $h\xi=\xi$ for every $h\in H$.
\end{proof}

Note that hyperbolicity of $X$ is used only to guarantee that the distance between two non-equivalent sequences defining a boundary point grows faster than some function. Consequently, Lemma \ref{lem:fixes boundary point} holds for any metric space $X$, not necessarily hyperbolic, that satisfies this condition. 
We are now ready to prove Theorem \ref{thm:hyperbolic}.
\begin{proof}[Proof of Theorem \ref{thm:hyperbolic}.]
    Since the boundary of $X$ is Hausdorff and the $G$-action on it is continuous, stabilizers of points in $\partial X$ are closed. 
    It then follows from Lemma \ref{lem:fixes boundary point} that $\mu$ is contained in the kernel of the action on the boundary, which we denote by $K$.

    Let $B(G)$
    be the set of elements $b\in G$ whose conjugacy class $b^G=\{g^{-1}bg~|~g\in G\}$ has compact closure; it is a closed, normal subgroup, see \cite[Theorem 2.8]{Cornulier_commability}.
    We claim that
    \[
    B(G) = \{b \in G \mid \exists N > 0 \, \forall x \in X\colon d(bx,x) < N \}.
    \]
    Indeed, $\subset$ only uses coboundedness: Let $b \in B(G)$ and $y \in X$. There exists $R > 0$ such that for all $x \in X$ there is $g \in G$ with $d(gx,y) < R$.
    Then $d(bx,x) \le d(bx,bg^{-1}y) + d(bg^{-1}y,g^{-1}y) + d(g^{-1}y,x) \le 2 d(x,g^{-1}y) + d(gbg^{-1}y,y) \le 2R + \diam(b^G y)$,
    which is independent of $x$.
    Then $\supset$ only uses properness of the action: If $d(bx,x) < N$ for all $x \in X$ then,
    given $x \in X$ and $g \in G$ also $d(g b g^{-1}x,x) = d(bg^{-1}x,g^{-1}x) < N$.
    Hence $b^G \subset \{g \in G \mid B(x,N) \cap g(B(x,N)) \neq \emptyset \}$, which has compact closure.
    
    Denote by $P(G)$ the set of elliptic elements in $G$, and recall that $\calW(G) = B(G) \cap P(G)$. Since every $g \notin P(G)$ fixes exactly two boundary points, and in particular not the entire boundary, it is easy to see that $g \notin B(G)$ and so $B(G)\subset P(G)$. Therefore, we have $\calW(G) = B(G)$.
    Moreover, with similar arguments we get that $B(G)$ is the union of all normal subgroups that act on $X$ with bounded orbits.
    
    Since $B(G)$ is normal and the $G$-action on $X$ is non-elementary, \cite[Lemma 3.6]{caprace2015amenable} implies that: (i) the action of $B(G)$ on $X$ has bounded orbits -- in particular, $B(G)$ is compact; and (ii) $B(G)= K$ (indeed, since the action is cobounded $\partial_X(G) = \partial X$, using the notations in the reference).
    It follows that $\ncm \le K=B(G)=\calW(G)$, as claimed.
\end{proof}

A similar argument should work for most CAT(0) groups.

\subsection{Cayley--Abels graphs}

If $G$ is a compactly generated totally disconnected lcsc
group, and $U \le G$ a compact open subgroup, then $G/U$ is the vertex set of a locally finite, connected graph, on which $G$ acts by left multiplication. Such a graph is called a \emph{Cayley--Abels graph} for $G$.
Using that $G$ has only countably many compact open subgroups, and every compact subgroup is contained in a compact open subgroup, we see that for any ergodic compact IRS $\mu$ of $G$ there exists a Cayley--Abels graph in which $\mu$-almost every subgroup fixes a vertex.
One can push such an IRS to a shift-invariant probability measure on $\{0,1\}^{G/U}$ by mapping a subgroup to its (almost surely non-empty) set of fixed points.
Conversely, given any shift-invariant, ergodic probability measure on $\{0,1\}^{G/U}$ without an atom at $\{\emptyset\}$, the pointwise stabilizer of a random subset is an ergodic compact IRS of $G$.

Below is an example of this construction, for which we are able to show that the IRS is contained in the locally elliptic radical.

\begin{example}
    Let $G$ be a compactly generated totally disconnected lcsc group and $\Gamma$ a Cayley--Abels graph for $G$ with vertex set $V$. Fix $0 < p < 1$.
    We obtain an ergodic compact IRS $\mu$ of $G$ as the pointwise stabilizer of a random subset of $V$, where each vertex is chosen uniformly at random with probability $p$.
    We can show that $\ncm \le \calE(G)$:
    For $x,y \in V$ say $x \sim_n y$ if for every $m \geq n$ the $m$-balls around $x$ and $y$ coincide, and let $\sim = \bigcup \sim_n$ be the equivalence relation that is the union of the relations $\sim_n$. Note that for every $x \in V$ the $\sim_n$-equivalence class, denoted $[x]_n$, is finite, and $\Stab([x]) = \bigcup_{n} \Stab([x]_n)$ is thus an increasing union of compact groups and hence locally elliptic.
    Now if $x \nsim y$, there are infinitely many $(z_i)$ with $d(z_i,x) \neq d(z_i,y)$. Almost surely a $\mu$-random subgroup fixes at least one of the $z_i$'s and therefore $Hx \neq Hy$. Hence, for all $x \in V$ we get $\mu(\{\bigcup_n \Stab([x]_n)\}) = 1$ and in particular $\ncm \le \bigcap_{x \in V} \bigcup_n \Stab([x]_n)$. This (clearly normal) subgroup is an intersection of locally elliptic subgroups, so $\ncm \le \calE(G)$.
    
    Now let $F := \overline{\{g \in G \mid gx = x \text{ for all but finitely many } x \in V\}} \le \calW(G)$. It is not hard to see that $F \le \ncm$.
    Moreover, if $p > 1-p_c$, where $p_c$ is the critical vertex percolation probability of $\Gamma$, then $\ncm \le F$. We show in the subsequent proposition that $F$ is compact.
\end{example}

The fact that $F$ is compact is of independent interest and follows from a difficult theorem by Trofimov.
We mention
that the same theorem is the crucial ingredient in the proof that $\calW(G)$ is closed.

\begin{proposition}\label{prop:finitely supported compact}
    Let $\Gamma$ be a connected, locally finite, vertex transitive graph. Then the group of finitely supported automorphisms of $\Gamma$ has compact closure.
\end{proposition}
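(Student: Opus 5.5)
Write $\Gamma$ with vertex set $V$, let $A := \Aut(\Gamma)$ with the permutation topology, and let $F_0 \le A$ be the subgroup of finitely supported automorphisms, i.e.\ those fixing all but finitely many vertices. We may assume $\Gamma$ is infinite, since otherwise $A$ is finite. The plan is to show that the supports of elements of $F_0$ are uniformly bounded in size, and then to deduce compactness of $\overline{F_0}$.

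\textbf{Step 1: uniform bound on supports.} Following the paper's hint, I would invoke Trofimov's theorem on vertex-transitive graphs. The consequence I want is this: there is a constant $n$ such that every automorphism of $\Gamma$ fixing the ball $B(v,n)$ pointwise, for some vertex $v$, acts trivially on a certain $A$-invariant quotient structure, and finitely supported automorphisms are confined to equivalence classes of uniformly bounded size.

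Concretely, consider the relation $x \equiv y$ if and only if $B(x,m)=B(y,m)$ as sets, or more precisely if $x,y$ have the same neighbourhoods, as in the relation $\sim$ in the preceding example. Its classes are finite and of uniformly bounded size by vertex transitivity. An automorphism supported on a finite set $S$ can only move vertices within classes determined by "being indistinguishable from outside $S$".

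I expect to argue as follows. If $g$ is finitely supported and $gx\neq x$, then $x$ and $gx$ have the same relationship to every vertex outside the support. Connectedness and local finiteness then force $x$ and $gx$ to lie in a common finite class. The size of that class is bounded by a constant $c$ independent of $x$, and this is exactly where Trofimov's theorem enters: it bounds the size of the finite blocks of vertices that can be permuted while fixing a large region.

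\textbf{Step 2: compactness.} Granting a uniform $c$ such that every $g\in F_0$ preserves each class of an $A$-invariant partition $\mathcal P$ of $V$ into finite blocks of size at most $c$, we get
\[
F_0 \le \prod_{P\in\mathcal P}\mathrm{Sym}(P)\cap A .
\]
The product on the right is compact in the permutation topology, as a closed subset of a product of finite groups. Hence $\overline{F_0}$ is compact.

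\textbf{Main obstacle.} The hard part is Step 1: producing the invariant partition into uniformly bounded blocks that contains all orbits of finitely supported automorphisms. This is where Trofimov's theorem is unavoidable. I would first try to cite the version stating that for a connected, locally finite, vertex-transitive graph there is an $A$-invariant equivalence relation with finite classes of bounded size, such that the vertex stabiliser in the quotient graph acts with finite $n$-ball-fixator. Then a finitely supported automorphism induces a finitely supported automorphism of the quotient. That induced automorphism fixes some $n$-ball, so by the finiteness it is trivial, and hence the original automorphism preserves every block.
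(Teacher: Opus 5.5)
\textbf{Verdict: genuine gap.} Your skeleton is the paper's: take Trofimov's $\Aut(\Gamma)$-invariant relation $\sim$ with finite classes, show that every finitely supported automorphism acts trivially on $\Gamma/{\sim}$, and conclude. Such automorphisms then lie in the kernel $\{g\in A \mid gP=P \text{ for all classes } P\}$, which is compact as a closed subset of $\prod_P \mathrm{Sym}(P)$; finiteness of each block is enough, and no uniform bound $c$ is needed. The gap is the version of Trofimov's theorem you plan to cite.

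\textbf{The cited version is false.} ``The vertex stabiliser of the quotient has finite $n$-ball fixators'' already fails for the $3$-regular tree $T$. The group $\Aut(T)$ has no nontrivial compact normal subgroup, so it acts faithfully and continuously on any invariant quotient with finite classes. The fixator of any ball in that quotient therefore contains an injective image of an open subgroup of the non-discrete group $\Aut(T)$, which is infinite.

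\textbf{The deduction from it is also invalid.} Even granting finite fixators, ``fixes an $n$-ball, so by the finiteness it is trivial'' does not follow: a finite fixator can contain nontrivial elements. You would need an extra argument, for example infinitely many disjointly supported conjugates lying in one fixator.

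\textbf{The correct input.} What Trofimov proves, and what the paper uses via \cite[Corollary 4]{trofimov1992action}, is that $\sim$ can be chosen so that every \emph{bounded} automorphism of $\Gamma/{\sim}$ that fixes a vertex is trivial. With this your last paragraph goes through. A finitely supported $g$ fixes every class containing a vertex outside its support. Hence its image $\bar g$ moves only finitely many vertices of the infinite, locally finite graph $\Gamma/{\sim}$. So $\bar g$ is bounded and fixes a vertex, hence $\bar g=1$.

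\textbf{Drop the Step 1 heuristics rather than repair them.} The ``same neighbourhoods'' relation is not preserved by finitely supported automorphisms. Replace each vertex of $\mathbb{Z}$ by a $5$-cycle and join consecutive cycles completely. Rotating a single cycle is a finitely supported automorphism, yet it sends a vertex to one with a different neighbourhood.

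For the eventual relation of the preceding example ($B(x,m)=B(y,m)$ for all large $m$), your observation is correct: $d(gx,z)=d(x,z)$ for $z$ outside the support, so $x\sim gx$. However, vertex transitivity only shows that all classes have the same cardinality, not that this cardinality is finite. That finiteness is exactly the nontrivial content you must outsource to Trofimov.
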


\begin{proof}
    Trofimov \cite[Corollary 4]{trofimov1992action} shows that there exists an equivalence relation $\sim$ on the set of vertices of $\Gamma$ that is compatible with the action of the automorphism group, i.e. $x \sim y$ if and only if $gx \sim gy$, has finite equivalence classes, and such that no bounded automorphism of $\Aut(\Gamma/{\sim})$ fixes a vertex of $\Gamma/{\sim}$.
    These properties ensure that $\Gamma/{\sim}$ is a Cayley--Abels graph of $\Aut(\Gamma)$. Any automorphism of $\Gamma$ fixing a vertex in $\Gamma$ also fixes its image in $\Gamma/{\sim}$. So every finitely supported automorphism of $\Gamma$ acts as a finitely supported and hence bounded automorphism of $\Gamma/{\sim}$ and hence lies in the kernel of the action of $\Aut(\Gamma)$ on $\Gamma/{\sim}$. This kernel is compact, which finishes the proof.
\end{proof}

\section{Reduction to compactly generated subgroups}\label{reduction}
The aim of this last section is to prove Corollary \ref{cor:reduction of weak conc to cg}, which gives a reduction of Conclusion~\ref{weak} to compactly generated subgroups. 

\begin{lemma}\label{lem:radicals and unions}
    Let $G$ be an lcsc group which is an ascending union
    $G = \bigcup_{n\in\bbN} O_n$
    of open subgroups.
    Then
        $\mathcal{E}(G) = \bigcup_{k \in \bbN} \bigcap_{n \geq k} \calE(O_n)$.
\end{lemma}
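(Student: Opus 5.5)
We prove the two inclusions separately. Throughout we use that $\calE(H)$ is the largest locally elliptic normal subgroup of $H$ (closed in $H$). We also use that local ellipticity is intrinsic: a subgroup $L\le O_n\le G$ with $O_n$ open is locally elliptic in $O_n$ if and only if it is locally elliptic in $G$. This holds because compact subsets and compact subgroups of $L$ are the same whether viewed in $O_n$ or in $G$.

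\emph{Inclusion $\subseteq$.} For each $n$, the subgroup $\calE(G)\cap O_n$ is normal in $O_n$. It is also locally elliptic, since a closed subgroup of a locally elliptic group is locally elliptic: a compact subset of $\calE(G)\cap O_n$ lies in a compact subgroup $C\le\calE(G)$, and then $C\cap O_n$ is a compact subgroup containing it, because $O_n$ is open and hence closed. By maximality, $\calE(G)\cap O_n\le\calE(O_n)$. Now let $x\in\calE(G)$ and choose $k$ with $x\in O_k$. Then $x\in O_n$ for all $n\ge k$, so $x\in\bigcap_{n\ge k}\calE(O_n)$.

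\emph{Inclusion $\supseteq$.} Set $L_k:=\bigcap_{n\ge k}\calE(O_n)$ and $L:=\bigcup_k L_k$. The $L_k$ increase with $k$, so $L$ is a subgroup. It is locally elliptic. Indeed, a compact $C\subseteq L$ is covered by the open subgroups $O_n$, so $C\subseteq O_m$ for some $m$. Moreover $C\subseteq L_k\subseteq\calE(O_j)$ for $j=\max(k,m)$ \emph{provided} $C$ lies in a single $L_k$. Then $C$ is contained in a compact subgroup of $\calE(O_j)$. To handle the proviso, note the following. Each $L_k$ is closed in $G$: $\calE(O_n)$ is closed in the open, hence closed, subgroup $O_n$, and an intersection of closed sets is closed. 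Also $L_k\subseteq O_k$ is a closed subgroup of $G$. Therefore $C\cap L_k$ is an increasing sequence of closed subsets covering $C$. By the Baire category theorem, some $C\cap L_k$ has nonempty interior in $C$. This alone does not give $C\subseteq L_k$, so I would instead argue differently. It suffices to treat finite sets together with compact open pieces: the subgroup generated by $C$ lies in the compactly generated open subgroup $\langle C\cup U\rangle$ for a compact identity neighbourhood $U$ of $O_1$ (assuming $O_1$, or $G$, is tdlc or Lie; in general one uses a compact identity neighbourhood of $G$). Alternatively, and more simply, I would show that the union $L$ is closed and that $\overline{L}$ is locally elliptic using Platonov-type arguments.

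The cleanest route is the normality argument. $L$ is normal in $G$: for $g\in G$ with $g\in O_m$, conjugation by $g$ preserves $\calE(O_n)$ for all $n\ge m$, since $\calE(O_n)$ is characteristic in $O_n$. Hence $gL_kg^{-1}=L_k$ for $k\ge m$, and so $gLg^{-1}=L$. Its closure $\overline L$ is then normal. A locally elliptic group is the union of an increasing family of compact open (in it) subgroups only in tdlc settings, so for local ellipticity I would use the standard fact that a (topological) group is locally elliptic iff every finite subset generates a relatively compact subgroup together with compact neighbourhoods, that is, work with $\overline{L}$. The union of the closed locally elliptic subgroups $L_k$ is increasing. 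Its closure is locally elliptic by the known fact that the closure of an increasing union of locally elliptic closed subgroups is locally elliptic. This is Platonov's theorem on directed unions, or follows because a compact subset of $\overline L$ lies in some $\overline{L_k\,V}$ for a compact open subgroup $V$ in the tdlc case.

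Hence $\overline L$ is a locally elliptic normal subgroup of $G$, so $L\le\overline L\le\calE(G)$. The main obstacle is precisely this step, namely showing that $L$ (or $\overline L$) is locally elliptic despite a compact set possibly not lying in a single $L_k$. I would first try the Baire argument combined with the fact that each $L_k$ is open in $L_{k+1}$ when each $O_k$ is open. Failing that, I would cite Platonov's theorem that an ascending union of locally elliptic closed subgroups has locally elliptic closure.
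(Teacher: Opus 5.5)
\textbf{There is a genuine gap in the inclusion $\supseteq$.} Your inclusion $\calE(G)\subseteq\bigcup_k\bigcap_{n\ge k}\calE(O_n)$ is correct and matches the paper, and so is your proof that $L$ is a normal subgroup. The gap is the step showing $L$ is locally elliptic. You flag it as the main obstacle but never close it:
\begin{itemize}
\item As you note yourself, the Baire argument does not put a compact $C\subseteq L$ into a single $L_k$.
\item The claim that $L_k$ is open in $L_{k+1}$ is stated without proof. It is true, but only because of the identity below, and openness in $L_{k+1}$ alone would not give openness in $L$.
\item The fallback ``Platonov's theorem'' (the closure of an ascending union of closed locally elliptic subgroups is locally elliptic) is neither precisely referenced nor proved. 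Your sketch for it does not work: a compact subset of $\overline L$ lies in some $L_kV$, but $L_kV$ is not a subgroup. Nothing you say makes $\langle L_k\cup V\rangle$ relatively compact or places it inside $\overline L$.
\end{itemize}

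\textbf{The missing idea.} It is the same maximality argument you already used for $\subseteq$, applied to $\calE(O_k)$ instead of $\calE(G)$. For $m\le n\le k$, the subgroup $O_n\cap\calE(O_k)$ is normal in $O_n$. It is also locally elliptic: intersect a compact subgroup of $\calE(O_k)$ with the open, hence closed, subgroup $O_n$. Hence $O_n\cap\calE(O_k)\le\calE(O_n)$.

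This gives $O_m\cap L_k\le\bigcap_{n=m}^{k-1}\calE(O_n)$, so $O_m\cap L_k=O_m\cap L_m$ for all $k\ge m$. Since $L_m\subseteq\calE(O_m)\subseteq O_m$, this says $L\cap O_m=L_m$.

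Now take a compact $C\subseteq L$. It lies in some $O_m$, because the $O_m$ form an ascending open cover, so $C\subseteq L_m$. Since $L_m$ is a closed subgroup of $G$ contained in the locally elliptic group $\calE(O_m)$, the set $\overline{\langle C\rangle}$ is a compact subgroup of $L_m\subseteq L$.

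The identity $L\cap O_m=L_m$ also shows that $L$ is closed, so no closure theorem of Platonov type is needed. This is exactly the claim $O_m\cap S=O_m\cap A_m$ on which the paper's proof rests.
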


\begin{proof}
To show $\mathcal{E}(G) \le \bigcup_{k \in \bbN} \bigcap_{n \geq k} \calE(O_n)$, let $H \triangleleft G$ be locally elliptic. We have $H = \bigcup_{k \in \bbN} H \cap O_k$. Therefore, it is enough to show $H \cap O_k \le \bigcap_{n \geq k} \calE(O_n)$ for every $k\in\mathbb{N}$.
This is true because, for every $k$ we have $H \cap O_k \in \calE(O_k)$.
Therefore, if $n \geq k$, then $H \cap O_k \le H \cap O_n \le \calE(O_n)$ and we get what we need.

To see that $S:=\bigcup_{k \in \bbN} \bigcap_{n \geq k} \calE(O_n)\le \mathcal{E}(G)$,
we show that $S$ is normal and locally elliptic. As
the sequence $A_k:=\bigcap_{n \geq k} \calE(O_n)$ is ascending,
normality follows if each $g\in G$ normalizes $A_k$ for
large $k$. This holds for all $k\geq m$ with $m\in \bbN$ such that $g\in O_m$,
as $O_m\leq O_n$ and $\calE(O_n)$ is normal in $O_n$ for all $n\geq k$.

To prove local ellipticity, let $K\subseteq S$ be compact. Then $K\subseteq O_m$
for some $m$. We claim that $O_m\cap S =O_m\cap A_m$. If this is true,
then the subgroup $S$ is locally closed and hence closed in~$G$.
Moreover, $K\subseteq A_m\subseteq \calE(O_m)$. As $S$ is closed in~$G$, for the compact
closure in $\calE(O_m)$ we have $\overline{\langle K \rangle}\leq S$.

To prove the claim, let $k>m$.
Now $O_n\cap \calE(O_k)$
is a locally elliptic, normal subgroup of $O_n$
for all $n\in \{m,\ldots,k-1\}$, whence $\calE(O_n)\geq O_n\cap \calE(O_k)\geq O_m\cap \calE(O_k)\geq O_m\cap A_k$
and thus $O_m\cap A_m=O_m\cap A_k\cap \bigcap_{n=m}^{k-1}\calE(O_n)
= O_m\cap A_k$.
Hence $O_m\cap S=\bigcup_{k\geq m}O_m\cap A_k=O_m\cap A_m$.
\end{proof}

\begin{proposition}\label{prop:direct union}
    Let $G = \bigcup O_n$ be an ascending union of open subgroups.
    Let $\mu$ be a compact IRS of $G$.
    Assume that for every $n$ with $\mu(\Sub(O_n))>0$,
    the IRS of $O_n$ defined via $\mu_n(A) := \mu(A)/\mu(\Sub(O_n))$, for $A \subset \Sub(O_n)$, is contained in $\calE(O_n)$.
    Then $\ncm \le \calE(G)$.
\end{proposition}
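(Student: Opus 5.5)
Every compact subgroup lies in some $O_n$: the $O_n$ are open and ascending, and a compact set is covered by finitely many of them. So $\mu\bigl(\bigcup_n \Sub(O_n)\bigr)=1$. Each $\Sub(O_n)$ is closed in $\Sub(G)$, being the complement of $\Hit(G\setminus O_n)$ with $G\setminus O_n$ open, hence Borel. Since $\Sub(O_n)$ increases in $n$, we get $\mu(\Sub(O_n))\to 1$. Fix $k$ with $\mu(\Sub(O_k))>0$. For $n\ge k$, conjugation by elements of $O_n$ preserves $\Sub(O_n)$, so $\mu_n$ is an IRS of $O_n$. By hypothesis, $\mu_n$-almost every subgroup lies in $\calE(O_n)$, which is closed. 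Hence $\mu$-almost every $H\in\Sub(O_n)$ satisfies $H\le \calE(O_n)$.

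Now intersect over the countably many $n\ge k$. For $\mu$-almost every $H$, we have $H\in\Sub(O_m)$ for some $m$, and then $H\in\Sub(O_n)$ for all $n\ge m$. Excluding a null set, this gives $H\le \calE(O_n)$ for all $n\ge \max(m,k)$, i.e.\ $H\le \bigcap_{n\ge \max(m,k)}\calE(O_n)$. By Lemma \ref{lem:radicals and unions}, the right-hand side is contained in $\calE(G)$. So $\mu(\Sub(\calE(G)))=1$.

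The last step passes from ``almost every subgroup'' to $\ncm$. Since $\calE(G)$ is closed, $\Sub(\calE(G))$ is a closed subset of full measure, so it contains $\supp(\mu)$. Therefore $\bigcup\supp(\mu)\subseteq \calE(G)$, and as $\calE(G)$ is a closed subgroup, $\ncm\le\calE(G)$.

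The step needing most care is the measurability and conjugation-invariance bookkeeping in the first paragraph: that $\Sub(O_n)$ is Borel, that $\mu_n$ is genuinely $O_n$-invariant, and that the countably many null sets can be discarded simultaneously. Closedness of $\calE(G)$ is also used at the end; the paper states this in the introduction, and it is also visible from the proof of Lemma \ref{lem:radicals and unions}.
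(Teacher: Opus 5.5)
Your proof is correct and takes essentially the same route as the paper: every compact subgroup lies in some $O_n$, the hypothesis is applied to each renormalised $\mu_n$, countably many full-measure events are intersected, and Lemma~\ref{lem:radicals and unions} places $\bigcap_{n\ge k}\calE(O_n)$ inside $\calE(G)$. The differences are only in presentation: the paper runs the bookkeeping through an $\epsilon$-argument with the conditional measures $\mu_{k_\epsilon}$, whereas you argue directly for $\mu$-almost every $H$, and you also spell out the final step from $\mu(\Sub(\calE(G)))=1$ to $\ncm\le\calE(G)$, which the paper leaves implicit.
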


\begin{proof}
    Let $\mu$ be a compact IRS of $G$.
    Assume without loss of generality that $\mu(\Sub(O_0))>0$.
    Consider the compact IRS $\mu_n$ of $O_n$ defined by $\mu_n(A) := \mu(A)/\mu(\Sub(O_n))$ for $A\subseteq \Sub(O_n)$. 
    
    Since the subgroups $O_n$ are open and exhaust $G$, for every compact subgroup $K \le G$ there exists $n_K\in\bbN$ with $K \le O_{n_K}$. In other words, $\calC(G) = \bigcup_n \calC(O_n)$, where $\calC(H)$ denotes the set of compact subgroups of a group $H$.
    Since $\mu (\calC(G))=1$, this implies that for every $\epsilon > 0$ there exists $k_\epsilon$ with $\mu(\Sub(O_{k_\epsilon})) > 1-\epsilon$.

    Let $n\geq k_\epsilon$. 
    Observe that for any $A\subseteq \Sub(O_{k_\epsilon})$, 
    \begin{align*}
        \mu_{k_\epsilon}(A)&=  \frac{\mu(A)}{\mu(\Sub(O_{k_\epsilon}))} \cdot \frac{\frac{1}{\mu(\Sub(O_n))}}{\frac{1}{\mu(\Sub(O_n))}} 
        = \frac{\mu_n(A)}{\mu_n(\Sub(O_{k_\epsilon}))}.
    \end{align*}    
and in particular,
    $$\mu_{k_\epsilon}(\Sub(\calE(O_n) \cap O_{k_\epsilon})) = \frac{\mu_n(\Sub(\calE(O_n)) \cap \Sub(O_{k_\epsilon}))}{\mu_n(\Sub(O_{k_\epsilon}))}.$$
By assumption, $O_n$ satisfies the weak conclusion, and so $\mu_n(\Sub(\calE(O_n)))=1$, it follows that the right hand side gives $1$.
    
    This implies that $\mu_{k_\epsilon}(\bigcap_{n \geq {k_\epsilon}} \Sub(\calE(O_n) \cap O_{k_\epsilon})) = 1$ and in particular $$\mu\Big(\bigcap_{n \geq {k_\epsilon}} \Sub(\calE(O_n))\Big) > 1-\epsilon.$$
    Finally, this shows that 
    \[
    1
    = \mu\Big(\bigcup_{{k} \in \bbN} \bigcap_{n \geq {k}} (\Sub(\calE(O_n)))\Big)
    = \mu\Big(\Sub\Big(\underbrace{\bigcup_{{k} \in \bbN} \bigcap_{n \geq {k}} (\calE(O_n))}_{=\calE(G)\text{ by Lemma \ref{lem:radicals and unions}}}\Big)\Big) 
    =\mu(\Sub(\calE(G))).
    \] 
\end{proof}
We obtain:
\begin{corollary}\label{cor:reduction of weak conc to cg}
    Let $G$ be an lcsc group such that every compactly generated subgroup of $G$ has the property that every compact IRS is contained in the locally elliptic radical. Then every compact IRS of $G$ is contained in the locally elliptic radical.
\end{corollary}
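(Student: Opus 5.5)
The plan is to exhaust $G$ by compactly generated open subgroups and then apply Proposition~\ref{prop:direct union}.

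\textbf{Step 1: the exhaustion.} Since $G$ is lcsc, it is $\sigma$-compact. Choose a compact identity neighbourhood $V$ and compact sets $K_1\subseteq K_2\subseteq\cdots$ with $G=\bigcup_n K_n$. Put $O_n:=\langle V\cup K_n\rangle$. Each $O_n$ contains the identity neighbourhood $V$, so it is an open subgroup. It is generated by the compact set $V\cup K_n$, so it is compactly generated. The $O_n$ increase and their union is $G$. Open subgroups are closed, so each $O_n$ is an lcsc group in its own right, and $\Sub(O_n)$ is naturally a closed subset of $\Sub(G)$.

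\textbf{Step 2: the restricted IRSs.} Fix $n$ with $\mu(\Sub(O_n))>0$ and set $\mu_n(A):=\mu(A)/\mu(\Sub(O_n))$ for Borel $A\subseteq\Sub(O_n)$. We check that $\mu_n$ is a compact IRS of $O_n$.
\begin{itemize}
\item Conjugation by $g\in O_n$ preserves $\Sub(O_n)$, and $\mu$ is $G$-invariant, so $\mu_n$ is $O_n$-invariant.
\item $\mu$-almost every subgroup is compact, hence $\mu_n$-almost every subgroup is compact. Compactness is intrinsic and does not depend on whether the subgroup is viewed in $O_n$ or in $G$.
\item The Chabauty topology on $\Sub(O_n)$ agrees with the subspace topology from $\Sub(G)$, because $O_n$ is clopen. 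So the Borel structures agree and $\mu_n$ is a genuine Borel probability measure on $\Sub(O_n)$.
\end{itemize}

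\textbf{Step 3: conclusion.} By hypothesis, applied to the compactly generated subgroup $O_n$, the compact IRS $\mu_n$ is contained in $\calE(O_n)$. This is exactly the assumption of Proposition~\ref{prop:direct union}. That proposition, together with Lemma~\ref{lem:radicals and unions}, gives $\ncm\le\calE(G)$.

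\textbf{Expected obstacle.} The only point needing care is the meaning of ``contained in $\calE(O_n)$''. The normal closure of $\mu_n$ is taken inside $O_n$, and it should coincide with $\overline{\langle\bigcup\supp\mu_n\rangle}$. It is also only formed when $\mu(\Sub(O_n))>0$, which is precisely the case Proposition~\ref{prop:direct union} requires. Beyond this, the hypotheses of the corollary and of the proposition match directly, so the argument is a direct verification.
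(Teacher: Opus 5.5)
Your proposal is correct and is the argument the paper intends: the paper states the corollary with no further proof, right after Proposition~\ref{prop:direct union}, which is meant to be applied to an exhaustion of $G$ by compactly generated open subgroups. You construct that exhaustion explicitly as $O_n=\langle V\cup K_n\rangle$ and check that each renormalised restriction $\mu_n$ is a compact IRS of $O_n$, which fills in exactly the details the paper leaves implicit.
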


We now get that Theorem~\ref{thm:p-adic}\ref{general-p-adic} is an immediate consequence of Theorem \ref{thm:p-adic}\ref{cg-or-alg}.

\bibliographystyle{alpha}
\bibliography{references}

@article {Intersectional,
    AUTHOR = {Hartman, Yair and Yadin, Ariel},
     TITLE = {Furstenberg entropy of intersectional invariant random
              subgroups},
   JOURNAL = {Compos. Math.},
  FJOURNAL = {Compositio Mathematica},
    VOLUME = {154},
      YEAR = {2018},
    NUMBER = {10},
     PAGES = {2239--2265},
      ISSN = {0010-437X,1570-5846},
   MRCLASS = {37A40 (37A15 37A50 60B15 60G10)},
 MRREVIEWER = {Michael\ Stolz},
       DOI = {10.1112/s0010437x18007261},
       URL = {https://doi-org.ezproxy.weizmann.ac.il/10.1112/s0010437x18007261},
}

@article {7s,
    AUTHOR = {Abert, Miklos and Bergeron, Nicolas and Biringer, Ian and
              Gelander, Tsachik and Nikolov, Nikolay and Raimbault, Jean and
              Samet, Iddo},
     TITLE = {On the growth of {$L^2$}-invariants for sequences of lattices
              in {L}ie groups},
   JOURNAL = {Ann. of Math. (2)},
  FJOURNAL = {Annals of Mathematics. Second Series},
    VOLUME = {185},
      YEAR = {2017},
    NUMBER = {3},
     PAGES = {711--790},
      ISSN = {0003-486X,1939-8980},
   MRCLASS = {22E40 (22D40 22E30 58J35)},
 MRREVIEWER = {Lifan\ Guan},
       DOI = {10.4007/annals.2017.185.3.1},
       URL = {https://doi-org.ezproxy.weizmann.ac.il/10.4007/annals.2017.185.3.1},
}

@article {MR3624401,
    AUTHOR = {Biringer, Ian and Tamuz, Omer},
     TITLE = {Unimodularity of invariant random subgroups},
   JOURNAL = {Trans. Amer. Math. Soc.},
  FJOURNAL = {Transactions of the American Mathematical Society},
    VOLUME = {369},
      YEAR = {2017},
    NUMBER = {6},
     PAGES = {4043--4061},
      ISSN = {0002-9947,1088-6850},
   MRCLASS = {28C10 (22F10 37A20)},
       DOI = {10.1090/tran/6755},
       URL = {https://doi-org.ezproxy.weizmann.ac.il/10.1090/tran/6755},
}

@book {Glasner2003,
    AUTHOR = {Glasner, Eli},
     TITLE = {Ergodic theory via joinings},
    SERIES = {Mathematical Surveys and Monographs},
    VOLUME = {101},
 PUBLISHER = {American Mathematical Society, Providence, RI},
      YEAR = {2003},
     PAGES = {xii+384},
      ISBN = {0-8218-3372-3},
   MRCLASS = {37A15 (28Dxx 37A25 37A35 37A45 37B99 54H20)},
       DOI = {10.1090/surv/101},
       URL = {https://doi.org/10.1090/surv/101},
}

@inproceedings{Gelander2018,
series={London Mathematical Society Lecture Note Series},
title={A lecture on invariant random subgroups},
booktitle={New Directions in Locally Compact Groups},
publisher={Cambridge University Press},
author={Gelander, Tsachik},
editor={Caprace, Pierre-Emmanuel and Monod, NicolasEditors},
year={2018},
pages={186–204},
collection={London Mathematical Society Lecture Note Series}
}

@article{bader2016amenable,
  title={Amenable invariant random subgroups},
  author={Bader, Uri and Duchesne, Bruno and L{\'e}cureux, Jean and Wesolek, Phillip},
  journal = {Isr. J. Math.},
 fjournal={Israel Journal of Mathematics},
  volume={213},
  number={1},
  pages={399--422},
  year={2016},
  publisher={Springer}
}

@article{caprace2015amenable,
  title={Amenable hyperbolic groups},
  author={Caprace, Pierre-Emmanuel and De Cornulier, Yves and Monod, Nicolas and Tessera, Romain},
  journal={J. Eur. Math. Soc.},
  fjournal={Journal of the European Mathematical Society},
  volume={17},
  number={11},
  pages={2903--2947},
  year={2015}
}

@article {Cornulier_commability,
    AUTHOR = {Cornulier, Yves},
     TITLE = {Commability and focal locally compact groups},
   JOURNAL = {Indiana Univ. Math. J.},
  FJOURNAL = {Indiana University Mathematics Journal},
    VOLUME = {64},
      YEAR = {2015},
    NUMBER = {1},
     PAGES = {115--150},
      ISSN = {0022-2518},
   MRCLASS = {22D05 (20F67 57S20)},
MRREVIEWER = {Helge Gl\"{o}ckner},
       DOI = {10.1512/iumj.2015.64.5441},
       URL = {https://doi.org/10.1512/iumj.2015.64.5441},
}

@article{wang1971compactness,
  title={Compactness properties of topological groups},
  author={Wang, S. P.},
  journal={Trans. Amer. Math. Soc.},
  fjournal={Transactions of the American Mathematical Society},
  volume={154},
  pages={301--314},
  year={1971}
}

@article{trofimov1992action,
  title={On the action of a group on a graph},
  author={Trofimov, V. I.},
  journal={Acta Appl. Math.},
  fjournal={Acta Applicandae Mathematica},
  volume={29},
  number={1-2},
  pages={161--170},
  year={1992},
  publisher={Springer}
}

@article{HofmannWillis2015,
  title={Continuity characterizing totally disconnected locally compact groups},
  author={Hofmann, Karl H. and Willis, George A.},
  journal={J. Lie Theory},
  volume={25},
  number={1},
  pages={1--7},
  year={2015}
}

@article{Raja2025,
title={BOUNDED CONJUGACY CLASSES AND CONJUGACY CLASSES SUPPORTING INVARIANT MEASURES AND AUTOMORPHISMS},
volume={112}, DOI={10.1017/S0004972724001266},
number={3},
journal={Bull. Aust. Math. Soc.},
fjournal={Bulletin of the Australian Mathematical Society},
author={Raja, C. R. E.},
year={2025},
pages={509–514}
}

@article {Willis1994,
    AUTHOR = {Willis, George},
     TITLE = {The structure of totally disconnected, locally compact groups},
   JOURNAL = {Math. Ann.},
  FJOURNAL = {Mathematische Annalen},
    VOLUME = {300},
      YEAR = {1994},
    NUMBER = {2},
     PAGES = {341--363},
      ISSN = {0025-5831,1432-1807},
   MRCLASS = {22D05},
MRREVIEWER = {K.\ H.\ Hofmann},
       DOI = {10.1007/BF01450491},
       URL = {https://doi.org/10.1007/BF01450491},
}

@article{Caprace_Goffer_Lederle_Tsankov_2025, 
title={On compact uniformly recurrent subgroups}, 
DOI={10.1017/S0013091525100783}, 
journal={Proc. Edinb. Math. Soc.},
fjournal={Proceedings of the Edinburgh Mathematical Society}, 
author={Caprace, Pierre-Emmanuel and Goffer, Gil and Lederle, Waltraud and Tsankov, Todor}, 
year={2025}, 
pages={1–11}}

@article{Gas,
    title = "Gasch{\"u}tz {Lemma} for compact groups",
    author = "Tal Cohen and Tsachik Gelander",
    year = "2018",
    doi = "10.1016/j.jalgebra.2017.11.043",
    volume = "498",
    pages = "254--262",
    journal={J. {Algebra}},
    fjournal = "Journal of Algebra",
    issn = "0021-8693",
    publisher = "Academic Press Inc.",
}

@article{usakov1970classes,
  title={CLASSES OF CONJUGATE SUBGROUPS IN TOPOLOGICAL GROUPS},
  author={U\v{s}akov, V. I.},
  journal={Soviet Mathematics},
  volume={11},
  number={1},
  pages={48},
  year={1970},
  publisher={American Mathematical Society}
}

@article {PropertyM,
    AUTHOR = {Bader, Uri and Caprace, Pierre-Emmanuel and Gelander, Tsachik
              and Mozes, Shahar},
     TITLE = {Lattices in amenable groups},
   JOURNAL = {Fund. Math.},
  FJOURNAL = {Fundamenta Mathematicae},
    VOLUME = {246},
      YEAR = {2019},
    NUMBER = {3},
     PAGES = {217--255},
      ISSN = {0016-2736,1730-6329},
   MRCLASS = {22E40 (22D05 22F10 22F30)},
  MRREVIEWER = {Huibin\ Chen},
       DOI = {10.4064/fm572-9-2018},
       URL = {https://doi.org/10.4064/fm572-9-2018},
}

@article{stuck1994stabilizers,
  title={Stabilizers for ergodic actions of higher rank semisimple groups},
  author={Stuck, Garrett and Zimmer, Robert J},
  journal={Ann. of Math. (2)},
  fjournal={Annals of Mathematics},
  pages={723--747},
  year={1994},
  publisher={JSTOR}
}

@article{bowen2015invariant,
  title={Invariant random subgroups of the free group},
  author={Bowen, Lewis},
  journal={Groups Geom. Dyn.},
  fjournal={Groups, Geometry, and Dynamics},
  volume={9},
  number={3},
  pages={891--916},
  year={2015}
}

@book{Palais1960,
  author       = {Palais, Richard S.},
  title        = {The Classification of {$G$}-Spaces},
  series       = {Memoirs of the AMS},
  number       = {36},
  publisher    = {American Mathematical Society},
  address      = {Providence, RI},
  year         = {1960}
}

@article {FinitCentralMeasures,
    AUTHOR = {Sit, Kwan-Yuk Claire},
     TITLE = {On bounded elements of linear algebraic groups},
   JOURNAL = {Trans. Amer. Math. Soc.},
  FJOURNAL = {Transactions of the American Mathematical Society},
    VOLUME = {209},
      YEAR = {1975},
     PAGES = {185--198},
      ISSN = {0002-9947,1088-6850},
   MRCLASS = {22E20 (20G25)},
MRREVIEWER = {Linda\ Preiss\ Rothschild},
       DOI = {10.2307/1997378},
       URL = {https://doi-org.ezproxy.weizmann.ac.il/10.2307/1997378},
}

@incollection{MAT,
 author = {Gl{\"o}ckner, Helge},
 title = {Endomorphisms of {Lie} groups over local fields},
 booktitle = {2016 {MATRIX} annals},
 isbn = {978-3-319-72298-6; 978-3-319-72299-3},
 pages = {101--165},
 year = {2018},
 publisher = {Cham: Springer},
 doi = {10.1007/978-3-319-72299-3_6},
}

@article{UNI,
 author = {Gl{\"o}ckner, Helge and Willis, George A.},
 title = {Uniscalar {{\(p\)}}-adic {Lie} groups},
 fjournal = {Forum Mathematicum},
 journal = {Forum Math.},
 issn = {0933-7741},
 volume = {13},
 number = {3},
 pages = {413--421},
 year = {2001},
 doi = {10.1515/form.2001.015},
 url = {hdl.handle.net/1959.13/933251},
}

@book{SER,
 author = {Serre, Jean-Pierre},
 title = {Lie algebras and {Lie} groups},
 edition = {2nd ed.},
 fseries = {Lecture Notes in Mathematics},
 series = {Lect. Notes Math.},
 issn = {0075-8434},
 volume = {1500},
 isbn = {3-540-55008-9},
 year = {1992},
 publisher = {Berlin etc.: Springer-Verlag},
 doi = {10.1007/978-3-540-70634-2},
 zbMATH = {52847},
 Zbl = {0742.17008}
}

@article{PAR,
 author = {Parreau, Anne},
 title = {Elliptic subgroups of linear groups over a valued field},
 fjournal = {Journal of Lie Theory},
 journal = {J. Lie Theory},
 issn = {0949-5932},
 volume = {13},
 number = {1},
 pages = {271--278},
 year = {2003},
 language = {French},
 url = {https://eudml.org/doc/122882},
 zbMATH = {1873030},
 Zbl = {1021.20035}
}

@book{BOU,
 author = {Bourbaki, Nicolas},
 title = {{Lie} groups and {Lie} algebras. {Chapters} 1-3},
 edition = {2nd printing},
 isbn = {3-540-50218-1},
 year = {1989},
 publisher = {Berlin etc.: Springer-Verlag},
}

@article{Wan,
 author = {Wang, John S. P.},
 title = {The {Mautner} phenomenon for $p$-adic {Lie} groups},
 fjournal = {Mathematische Zeitschrift},
 journal = {Math. Z.},
 issn = {0025-5874},
 volume = {185},
 pages = {403--412},
 year = {1984},
 doi = {10.1007/BF01215048},
 url = {https://eudml.org/doc/173410},
 zbMATH = {3857404},
 Zbl = {0539.22015}
}

@article{Willis2023,
  title={Groups with flat-rank greater than $1$},
  author={Willis, George A},
  journal={J. Lie Theory},
  fjournal={Journal of Lie Theory},
  volume={33},
  number={1},
  pages={433--452},
  year={2023}
}

@article {AGV,
    AUTHOR = {Ab\'ert, Mikl\'os and Glasner, Yair and Vir\'ag, B\'alint},
     TITLE = {Kesten's theorem for invariant random subgroups},
   JOURNAL = {Duke Math. J.},
  FJOURNAL = {Duke Mathematical Journal},
    VOLUME = {163},
      YEAR = {2014},
    NUMBER = {3},
     PAGES = {465--488},
      ISSN = {0012-7094,1547-7398},
   MRCLASS = {20F69 (05C25 05C81 35P20 53C24)},
  MRNUMBER = {3165420},
MRREVIEWER = {B.\ Sury},
       DOI = {10.1215/00127094-2410064},
       URL = {https://doi-org.ezproxy.weizmann.ac.il/10.1215/00127094-2410064},
}

\end{document}